\newtheorem{theorem}{Theorem}[section]
\newtheorem{definition}[theorem]{Definition}
\numberwithin{equation}{section}
\newtheorem{lemma}[theorem]{Lemma}
\newtheorem{proposition}[theorem]{Proposition}
\newtheorem{corollary}[theorem]{Corollary}
\newtheorem{remark}[theorem]{Remark}
\newcommand{\Var}{{\mathrm{Var}}}
\newtheorem{question}[theorem]{Question}
\numberwithin{equation}{section}
\def\N{\mathbb{N}}
\def\Z{\mathbb{Z}}
\def\R{\mathbb{R}}
\def\EE{\mathcal{E}}
\newcommand{ \cL}{ \mathcal L }
\def\PP{\mathcal{P}}
\renewcommand{\phi}{\varphi}
\renewcommand{\epsilon}{\varepsilon}
\newcommand{ \mix}{ t_{\mathrm{mix}} }
\newcommand{\1}{{\text{\Large $\mathfrak 1$}}}
\newcommand{ \rel}{ t_{\mathrm{rel}} }
\renewcommand{\emptyset}{\varnothing}
\newcommand{\til}{\widetilde}
\newcommand{\tstop}{t_{\mathrm{stop}}}
\newcommand{\pr}[1]{\mathbb{P}\!\left(#1\right)}
\newcommand{\estart}[2]{\mathbb{E}_{#2}\!\left[#1\right]}
\newcommand{\prstart}[2]{\mathbb{P}_{#2}\!\left(#1\right)}
\newcommand{\prcond}[3]{\mathbb{P}_{#3}\!\left(#1\;\middle\vert\;#2\right)}
\newcommand{\econd}[2]{\mathbb{E}\!\left[#1\;\middle\vert\;#2\right]}
\newcommand{\escond}[3]{\mathbb{E}_{#3}\!\left[#1\;\middle\vert\;#2\right]}
\newcommand{\E}{{\mathbb{E}}}
\def\P{\mathbb{P}}
\newcommand{\norm}[1]{\left\| #1 \right\|}
\newcommand{\tn}{|\kern-.1em|\kern-0.1em|}
\newcommand\be{\begin{equation}}
\newcommand\ee{\end{equation}}
\def\eps{\varepsilon}
\newcommand{\tv}[1]{\left\|#1\right\|_{\rm{TV}}}
\newcommand{\SRW}{{\mathrm{SRW}}}
\newcommand{\fu}{{\mathrm{full}}}
\newcommand{\mixsrw}{ t_{\mathrm{mix}}^{\mathrm{SRW}} }
\newcommand{\mixi}{ t_{\mathrm{mix}}^{(\infty)} }
\newcommand{\mixisrw}{ t_{\mathrm{mix}}^{\mathrm{SRW},(\infty)} }
\newcommand{\mixf}{ t_{\mathrm{mix}}^{\mathrm{full}} }
\newcommand{\mixfmup}{ t_{\mathrm{mix}}^{\mathrm{full},(\mu,p)} }
\newcommand{\mixfi}{ t_{\mathrm{mix}}^{\mathrm{full},(\infty)} }
\newcommand{\mixfmupi}{ t_{\mathrm{mix}}^{\mathrm{full},(\mu,p),(\infty)} }
\newcommand{\hit}{t_{\mathrm{hit}}}
\newcommand{\hitsrw}{t_{\mathrm{hit}}^{\mathrm{SRW}}}
\newcommand{\hitf}{t_{\mathrm{hit}}^{\mathrm{full}}}
\newcommand{\hitfmup}{t_{\mathrm{hit}}^{\mathrm{full},(\mu,p)}}
\newcommand{\sfrac}[2]{\mbox{\small $\frac{#1}{#2}$}}
\newcommand{\ssfrac}[2]{\mbox{\footnotesize $\frac{#1}{#2}$}}
\newcommand{\half}{\ssfrac{1}{2}}
\begin{document}

\title{\bf A comparison principle for random walk on dynamical percolation}

\author{Jonathan Hermon
\thanks{
University of British Columbia, Vancouver, Canada.  E-mail: {jhermon@math.ubc.ca}.}
\and Perla Sousi
\thanks{
University of Cambridge, Cambridge, UK.  E-mail: {p.sousi@statslab.cam.ac.uk}. This work was supported by the Engineering and Physical Sciences Research Council: JH by EP/L018896/1 and PS by EP/R022615/1.}
}
\date{}

\maketitle

\begin{abstract}
We consider the model of random walk on dynamical percolation introduced by Peres, Stauffer and Steif in~\cite{PSSsub}. We obtain comparison results for this model for hitting and mixing times and for the spectral-gap
 and log-Sobolev constant with the corresponding quantities for simple random walk on the underlying graph $G$, for  general graphs. When  $G$ is the torus $\Z_n^d$, we recover the results of Peres et al.\ and we also extend them to the critical case. We also obtain bounds in the cases where $G$ is a transitive graph of moderate growth and also when it is the hypercube. 
\newline
\newline
\emph{Keywords and phrases.} Dynamical percolation, mixing times, hitting times, spectral profile.
\newline
MSC 2010 \emph{subject classifications.} Primary 60F05, 60G50.
\end{abstract}

\section{Introduction}

In this paper we consider the model of random walk on a dynamically evolving environment introduced in \cite{PSSsub}. Fix a base graph $G=(V,E)$ and let each edge $e$ refresh at rate $\mu$ to open with probability $p$ and closed with probability $1-p$ independently of other edges and previous states of the same edge. Let $X$ be a continuous time random walk that moves as follows: at rate $1$ it chooses one of its neighbours uniformly at random and only jumps there if the edge connecting the neighbour to its current location is open. Otherwise it stays in place. We denote the state of the full system at time $t$ by $(X_t,\eta_t)$, where $X_t\in V$ and $\eta_t\in \{0,1\}^E$ with $0$ representing a closed edge and~$1$ an open one. We refer to $\eta_t$ as the environment at time $t$. We emphasise  that $(X_t,\eta_t)_{t \ge 0}$ is Markovian, while the location of the walker~$(X_t)_{t \ge 0}$ is not.
One readily checks that $\pi_{\mathrm{full},p}=\pi\times \pi_p$ is the unique stationary distribution
and that the process is reversible; here $\pi$ is the degree biased distribution on $V$, i.e.\ $\pi(x)=\deg(x)/(2|E|)$ for all $x$ and
$\pi_p$ is product measure of $\mathrm{Ber}(p)$ on the edges. Moreover, even if the environment process
$\{\eta_t\}_{t \ge 0}$ is fixed,  $\pi$ is a stationary distribution for the resulting time inhomogeneous Markov process $(X_t)_{t \ge 0}$.

There has been a lot of interest recently in studying these processes. The case where $G=\Z_n^d$ was studied in~\cite{PSSsub, PSSsuper}.  The subcritical regime seems to be fully understood \cite{PSSsub}, while the supercritical case still poses challenges. In~\cite{conf}, the authors established precise mixing time results for the non-backtracking random walk on a dynamic configuration model and in \cite{ER} the authors studied the case where $G$ is the complete graph. In this paper, we study mixing and hitting times for random walk on dynamical percolation on general graphs. We develop general machinery that allows for a comparison of these quantities with the corresponding ones for simple random walk (SRW) on the base graph $G$. Recall that the simple random walk (SRW) on $G=(V,E)$ is a Markov chain on $V$ with transition probabilities given by $P(x,y)=\1(\{x,y\} \in E)/\deg (x) $, where $\deg (x)$ is the degree of $x$ (i.e.\ at each step the walk picks a neighbour uniformly at random and jumps to it). We emphasise that from now on, whenever we write SRW, we refer to simple random walk on the (static) graph $G$, i.e.\ where all edges in $E$ are present. Below, we shall consider its continuous-time version with jump rate 1. We note that our upper bounds on the hitting and mixing times hold for all $\mu$ and all $p \in (0,1] $ with no difference between the subcritical and supercritical regimes. This is in sharp contrast to previous works.

Let $P$ be a transition matrix  with stationary distribution $\pi$. We define the total variation and $L_\infty$ mixing times as follows
\begin{align*}
        \mix(\epsilon) &= \min\{ t\geq 0: \max_{x} \tv{P^t(x,\cdot) - \pi} \leq \epsilon\} \\
        \mixi(\epsilon) &= \min\{ t\geq 0: \max_{x} \|P^t(x,\cdot) - \pi\|_{\infty, \pi} \leq \epsilon\},
\end{align*}
where the total variation and $L_{\infty}$ norms of a signed measure $a$ are given by $\|a  \|_{\rm{TV}}=\frac 12 \sum_x |a(x)| $ and  $\|a  \|_{\infty,c}= \max_x |a(x)/c(x)| $. 
We are primarily interested in the total variation and $L_{\infty}$ mixing times of the full system and we denote them by 
\[
\mixf(\epsilon)=\mixfmup(\epsilon) \quad \text{and}  \quad
\mixfi(\epsilon)=\mixfmupi(\epsilon).
\]
 We denote the corresponding mixing times for the SRW on $G$ by $\mixsrw(G,\eps) $ and $\mixisrw(G,\eps) $. We omit $G$ when clear from context, and $\eps$ when $\eps=1/4$.
 
 Let $\mathbb{P}_{x,\eta}$ be the law of the full process, started from initial environment $\eta$ and initial location $x$ for the walk. We denote the corresponding expectation by $\mathbb{E}_{x,\eta}$. When $p$ and $\mu$ are not clear from context, we write  $\mathbb{P}_{x,\eta}^{(\mu,p)}$ and   $\mathbb{E}_{x,\eta}^{(\mu,p)}$. We write $\mathbb{P}_{x,\eta}^t $ as a shorthand for the law of the full process at time $t$, $\prstart{(X_t,\eta_t) = (\cdot, \cdot)}{x,\eta} $.  

We are also interested in \emph{hitting times} by the full process of the form
\[\hitf=\hitfmup=\max_{x,y \in V ,\eta \in \{0,1\}^E}\mathbb{E}_{x,\eta}[T_{\{y\} \times  \{0,1\}^E}], \]   
 where for a set $A \subset V \times \{0,1\}^E $ its \emph{hitting time} $T_A=\inf\{t:(X_t,\eta_t) \in A \}$ is defined to be the first time the process visits the set $A$. Our Theorem \ref{thm:2} bounds $\hitf$ in terms of the hitting times for the SRW which are denoted by $\hitsrw(G)=\max_{x,y \in V } \mathbb{E}_{x}^{\SRW}[T_y] $.

For functions $f,g$ we will write $f(n) \lesssim g(n)$ if there exists a constant $c > 0$ such that $f(n) \leq c g(n)$ for all $n$.  We write $f(n) \gtrsim g(n)$ if $g(n) \lesssim f(n)$.  Finally, we write $f(n) \asymp g(n)$ if both $f(n) \lesssim g(n)$ and $f(n) \gtrsim g(n)$. {We write $\asymp_{a}, \lesssim_{a}$ and $\gtrsim_a$ when the implied constant depends on $a$.}

Peres and Steif \cite{SteifPC} asked whether $\hitfmup \lesssim_{\mu,p}|V|^3 $, which is the natural analog of the classical bound $\hitsrw\lesssim |V|^3$ (see e.g., \cite{aldous,levin}). In the following theorem we give an upper bound on $\hitfmup $ in terms of $\hitsrw$. Using  $\hitsrw\lesssim |V|^3$  this  answers affirmatively their question.

\begin{theorem}[Hitting time comparison with SRW]
\label{thm:2}
For every $\mu$ there exists a positive constant~$c_1$ such that for all 
graphs $G$ and all $p$ we have that
\begin{equation}
\label{e:hitmain1}
\hit^{\mathrm{full},(\mu,p)} \leq \frac{c_1}{p} \cdot\hitsrw.
\end{equation}
Moreover, there exists a constant $c_2$ so that for all graphs $G$ and all $(\mu,p) \in (0,1]^2 $ we have that 
\begin{equation}
\label{e:hitmain2}
\hitfmup \leq c_2( \mu^{-1} \hit^{\mathrm{full},(1,p)}+\mixfmup ). 
 \end{equation}
\end{theorem}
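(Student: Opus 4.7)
I would prove the two inequalities by separate routes: (1.2) is a purely internal scaling comparison handled by Dirichlet-form monotonicity, while (1.1) is the substantive comparison with SRW and would go through a Poisson-equation / supermartingale argument.

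\textbf{Plan for (1.2).} First, a standard coupling-to-stationarity argument at time $t_0 \asymp \mixfmup$ yields
\[
\hitfmup \lesssim \mixfmup + \max_y \mathbb{E}_{\pi_{\mathrm{full},p}}^{(\mu,p)}[T_y].
\]
For the stationary hitting time, consider the auxiliary process obtained by slowing the walker in $(\mu, p)$ from attempt rate $1$ to attempt rate $\mu$, keeping the environment rate at $\mu$. This auxiliary process is simply $(1, p)$ time-rescaled by $\mu^{-1}$, so its stationary hitting time equals $\mu^{-1}\mathbb{E}_{\pi_{\mathrm{full},p}}^{(1,p)}[T_y] \leq \mu^{-1}\hit^{\mathrm{full},(1,p)}$. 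The two processes share the reversible measure $\pi_{\mathrm{full},p}$, and the actual $(\mu,p)$ process has a pointwise larger Dirichlet form (the walker component is weighted by $1$ rather than $\mu$). The variational formula
\[
\mathbb{E}_{\pi_{\mathrm{full},p}}[T_y] = \sup_{f\colon f(y)=0}\!\big[2\langle f\rangle_{\pi_{\mathrm{full},p}} - \mathcal{E}(f,f)\big]
\]
is monotone decreasing in $\mathcal{E}$, so $\mathbb{E}_{\pi_{\mathrm{full},p}}^{(\mu,p)}[T_y] \leq \mu^{-1}\hit^{\mathrm{full},(1,p)}$, which combined with the display above gives (1.2).

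\textbf{Plan for (1.1).} Set $h(x) := \mathbb{E}_x^{\SRW}[T_y]$. Since $h$ does not depend on $\eta$, the full-process generator applied to $h$ reduces to
\[
\mathcal{L}_{\mathrm{full}} h(x, \eta) = \sum_{z \sim x} \frac{\eta(\{x, z\})}{\deg x}\!\big[h(z) - h(x)\big],
\]
with $\pi_p$-mean equal to $p\cdot \mathcal{L}^{\SRW} h(x) = -p$ at every $x \neq y$. Hence $h(X_t) + p(t \wedge T_y)$ would be a supermartingale if $\eta$ were frozen at its stationary measure; to make it pathwise I would add a correction $g(x, \eta)$ defined as the environment-resolvent of the fluctuation $D(x, \eta) := \mathcal{L}_{\mathrm{full}} h(x, \eta) + p$, computed with the walker frozen at $x$. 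Because $D(x, \cdot)$ depends on $\eta$ only through the $\deg x$ edges incident to $x$, is mean-zero under $\pi_p$, and each incident edge relaxes exponentially at rate $\mu$, the resolvent converges and satisfies $|g(x, \eta)| \leq c(\mu)\, \max_{z \sim x} |h(z) - h(x)| \leq c(\mu)\, \hitsrw$. Optional stopping applied to $M_t := h(X_{t \wedge T_y}) + g(X_{t \wedge T_y}, \eta_{t \wedge T_y}) + p(t \wedge T_y)$ then gives $p \cdot \mathbb{E}_{x,\eta}[T_y] \leq h(x) + g(x, \eta) \lesssim_\mu \hitsrw$, which is (1.1).

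\textbf{Main obstacle.} The delicate step is that the frozen-walker resolvent $g$ does not exactly turn $M_t$ into a supermartingale, because $\mathcal{L}_{\mathrm{walk}}$ and $\mathcal{L}_{\mathrm{env}}$ fail to commute: when the walker jumps from $x$ to $z$, the relevant potential jumps from $g(x,\eta)$ to $g(z,\eta)$, producing an error that must be absorbed. I would handle this by applying the strong Markov property at successive walker moves and resetting the resolvent at each new location, showing that each per-move adjustment is uniformly bounded by $\max_{z \sim x}|h(z)-h(x)| \leq \hitsrw$ (a consequence of the triangle inequality for hitting times). Summing these adjustments telescopes and keeps the total $O_\mu(\hitsrw)$; this is also the step where the $\mu$-dependence enters the constant $c_1$.
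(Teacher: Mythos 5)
Your treatment of \eqref{e:hitmain2} is essentially correct and is a legitimate alternative to the paper's route: where you invoke the Dirichlet principle $\E_{\pi}[T_A]=\sup\{2\E_{\pi}[f]-\EE(f,f):\ f|_A=0\}$ and its monotonicity under Dirichlet-form domination, the paper passes through the quasi-stationary characterization $\E_{\pi}[T_A]\le 1/\lambda(A)$ and compares the Dirichlet eigenvalues of the two generators via \eqref{eq:lambdaa}. Both are monotonicity arguments for a variational quantity and both use reversibility of the full process; your version is, if anything, slightly more direct.

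The proposal for \eqref{e:hitmain1}, however, has a genuine gap that the ``per-move resetting'' paragraph does not repair. Write $\delta h:=\max_{v\sim w}|h(v)-h(w)|$, which can be as large as $\hitsrw$. Your corrector is $g(x,\eta)=\mu^{-1}\sum_{z\sim x}\frac{\eta(xz)-p}{\deg x}\,(h(z)-h(x))$, and indeed $\cL_{\mathrm{env}}\,g(x,\cdot)=-D(x,\cdot)$, so $\cL_{\mathrm{full}}(h+g)(x,\eta)=-p+\cL_{\mathrm{walk}}\,g(\cdot,\eta)(x)$. The leftover cross term is pointwise of order $\mu^{-1}\delta h$, i.e.\ up to $\mu^{-1}\hitsrw$, which dwarfs the target drift $-p$; worse, its $\pi_p$-average over $\eta$ is not zero (the products $\eta(xz)(\eta(xz)-p)$ have mean $p(1-p)$), and on a $d$-regular graph it equals $2p(1-p)/(\mu d)$ at every $x\neq y$ --- a systematic \emph{positive} drift already comparable to $p$ once $\mu d\lesssim 1$. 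Iterating the corrector construction multiplies the fluctuation by a further factor of $\mu^{-1}\ge 1$ at each step, so the series does not converge, and solving the Poisson equation for the full process exactly is circular, since its sup-norm \emph{is} $\max_{x,\eta}\E_{x,\eta}[T_y]$. The telescoping claim is also false as stated: the sum of per-move adjustments runs over the number of walker moves before $T_y$, which is itself of order $\E[T_y]$, and the increments $g(x_{i+1},\eta)-g(x_i,\eta)$ carry no sign or cancellation, so the total is (number of moves) times $O(\mu^{-1}\hitsrw)$, not $O_{\mu}(\hitsrw)$. There is also a structural obstruction: even a correct comparison of drifts or Dirichlet forms cannot by itself bound one-way expected hitting times, since $\E_x[T_y]$ is not monotone under increasing transition rates; only commute times are. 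This is exactly why the paper proceeds differently: it builds regeneration times at which the environment is exactly $\pi_p$ and independent of the walker (Definition~\ref{def:aux}), obtains an auxiliary chain on $V$ with stationary distribution $\pi$ and $P_{\mathrm{aux}}(x,y)\gtrsim_{\mu} p\,P_{\SRW}(x,y)$ (Lemmas~\ref{lem:inv} and~\ref{lem:PauxPsrw}), converts this one-sided Dirichlet-form comparison into a \emph{commute-time} bound via effective resistance and the Doyle--Steiner symmetrization inequality (Theorem~\ref{lem:commutetime} and Lemma~\ref{lem:symmetriz}, the latter needed because the auxiliary chain need not be reversible), and finally returns to continuous time by Wald's identity. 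Some mechanism supplying both exact stationarity of the environment at the comparison times and a commute-time (rather than drift) comparison is needed; the corrector sketch supplies neither.
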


We believe the $\mixfmup$ term in~\eqref{e:hitmain2} in most cases satisfies $\mixfmup \lesssim  \mu^{-1} \hit^{\mathrm{full},(1,p)}  $ and can thus be removed from ~\eqref{e:hitmain2}, see Remark 2 in Section~\ref{sec:remarks}.

The Dirichlet form associated to the transition matrix $P$ (respectively, generator $\cL$) is defined to be
 \[
 \EE_P(f,f)  = \frac{1}{2} \sum_{x,y} \pi(x) P(x,y) (f(x) - f(y))^2=\pi\left( (I-P)f \cdot f\right)
 \]
for all $f:\Omega \to \R$ (respectively, $\EE_{\cL}(f,f)  = \frac{1}{2} \sum_{x,y} \pi(x) \cL(x,y) (f(x) - f(y))^2 = \pi\left(-\cL f\cdot f \right)$). 

For $\epsilon>0$ we denote the spectral profile
\begin{equation}
\label{e:la}
\Lambda(\eps)=\min \{\EE(h,h) : h \in \R^{\Omega}, \, \Var_{\pi}(h)=1,\, \pi( \mathrm{supp}(h)) \le \eps   \},
\end{equation}
 where $ \mathrm{supp}(h)=\{x \in \Omega :h(x) \neq 0 \}$ is the \emph{support} of $h$ and $\Var_{\pi}(h)=\E_{\pi}[(h-\E_{\pi}h)^{2}] $ is the variance w.r.t.\ $\pi$.

For $\epsilon>0$, the $\epsilon$-\emph{spectral-profile} time is given by 
\begin{equation}
\label{e:profiles}
t_{\mathrm{spectral-profile}}(\eps)= \int_{4\pi_*}^{4/\eps} \frac{2d \delta}{\delta \Lambda(\delta) },
\end{equation}
where $\pi_*=\min_x \pi(x)$.

Goel et al.\ \cite{spectral} showed that $\mixi(\eps) \le t_{\mathrm{spectral-profile}}(\eps) $ (this refines the evolving sets bound of Morris and Peres \cite{evolving}). Let $t_{\mathrm{spectral-profile}}^{\SRW}$ be $t_{\mathrm{spectral-profile}}$ defined w.r.t.\ simple random walk.  We recall a result of Kozma \cite{kozma}  that for all (simple) finite graphs  $t_{\mathrm{spectral-profile}}^{\SRW} \lesssim  \mixisrw \log \log |V| $. However, there are many families of graphs for which  $t_{\mathrm{spectral-profile}}^{\SRW} \asymp \mixisrw \asymp \mixsrw$. For instance, this is the case for the hypercube and for vertex-transitive graphs of moderate growth.

\begin{theorem}[Mixing time comparison with SRW]
\label{thm:1}
There exists a positive constant $c_1$ such that for all graphs $G$ and all $(\mu,p) \in (0,1]^2 $ we have for all $\epsilon \in (0,1)$
\begin{equation}
\label{e:mixmain}
\mixfmupi(\epsilon) \leq \frac{c_1}{\mu p} \cdot  t_{\mathrm{spectral-profile}}^{\SRW}(\epsilon) +\frac{c_1}{\mu}|\log\left(1-p\right)|.
\end{equation}
\end{theorem}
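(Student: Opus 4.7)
The plan is to apply the Goel--Montenegro--Tetali spectral-profile bound \cite{spectral} to the full process $(X_t,\eta_t)$, namely
\[
\mixfmupi(\eps) \le \int_{4\pi_{\mathrm{full},*}}^{4/\eps} \frac{2\,d\delta}{\delta\,\Lambda_{\mathrm{full}}(\delta)},
\]
and to lower bound $\Lambda_{\mathrm{full}}(\delta)$ by Dirichlet-form comparison with SRW. The full Dirichlet form splits as $\EE_{\mathrm{full}}=\EE_{\mathrm{walk}}+\EE_{\mathrm{env}}$: the walker contributes a factor $p$ (only open-edge moves count) and is compared to SRW, while the environment, a product of independent rate-$\mu$ Bernoulli$(p)$ chains, contributes a factor $\mu$ via its spectral gap and log-Sobolev constant. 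These two contributions should produce the two terms in the theorem.

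Concretely, given $h:V\times\{0,1\}^E\to\R$, introduce $\bar h(x)=\E_{\pi_p}[h(x,\cdot)]$ and use the variance decomposition
\[
\Var_{\pi_{\mathrm{full}}}(h)=\Var_\pi(\bar h)+\E_\pi\!\left[\Var_{\pi_p}(h(x,\cdot))\right].
\]
Since the environment has spectral gap $\mu$, $\EE_{\mathrm{env}}(h,h)\ge \mu\,\E_\pi[\Var_{\pi_p}(h(x,\cdot))]$. For the walker, Jensen's inequality applied to $\pi_p(\,\cdot\,|\,\eta_{\{x,y\}}=1)$, together with a per-edge Poincar\'e step that absorbs the residual $h$-variation in $\eta_{\{x,y\}}$ into $\EE_{\mathrm{env}}$, yields
\[
\EE_{\mathrm{walk}}(h,h)+C\,\EE_{\mathrm{env}}(h,h)\ \ge\ p\,\EE_{\SRW}(\bar h,\bar h).
\]
Combining these with the variance decomposition and the inequalities $\mu,p\le 1$ should give $\Lambda_{\mathrm{full}}(\delta)\gtrsim \mu p\,\Lambda_{\SRW}(\delta)$ on the SRW range $\delta\in[4\pi_*,4/\eps]$, and integrating this produces the first term $\frac{c_1}{\mu p}\,t^{\SRW}_{\mathrm{spectral\text{-}profile}}(\eps)$.

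The tail $\delta\in[4\pi_{\mathrm{full},*},4\pi_*]$ is the more delicate piece, since its logarithmic length is $|E|\cdot|\log\min(p,1-p)|$ yet the target contribution is only $\frac{c_1}{\mu}|\log(1-p)|$. The idea is that a near-minimiser of $\Lambda_{\mathrm{full}}(\delta)$ with $\delta<\pi_*$ must be morally a function of $\eta$ alone (its support is too small to spread across $V$), so its energy is controlled by the product-environment dynamics. Since the modified log-Sobolev constant of that product chain is of order $\mu/|\log(1-p)|$ as $p\to 1$, feeding this into the standard log-Sobolev $L_\infty$-mixing bound for the environment reduced chain gives exactly the $\frac{c_1}{\mu}|\log(1-p)|$ contribution, without any spurious factor of $|E|$.

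The main obstacle is the Dirichlet-form comparison for the walk: rigorously establishing the displayed inequality in a way that controls the residual $\eta_{\{x,y\}}$-variation of $h$ by a dimension-free multiple of $\EE_{\mathrm{env}}(h,h)$, and translating $\pi_{\mathrm{full}}(\mathrm{supp}(h))\le \delta$ into a usable bound on the $\pi$-mass of the support of $\bar h$ for the purpose of applying $\Lambda_{\SRW}(\delta)$. The tail analysis, by contrast, is then essentially a log-Sobolev computation on the product-of-Bernoullis environment, once the reduction to $\eta$-only test functions is justified.
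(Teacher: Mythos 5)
Your route — a direct Dirichlet-form comparison on the product space via the decomposition $\Var_{\pi_{\mathrm{full}}}(h)=\Var_\pi(\bar h)+\E_\pi[\Var_{\pi_p}(h(x,\cdot))]$ — is genuinely different from the paper's, which avoids any direct comparison between chains on $V\times\{0,1\}^E$ and on $V$: instead it builds an auxiliary chain on $V$ from regeneration times, compares \emph{that} chain's Dirichlet form with SRW, and transfers the spectral-profile bound through the probabilistic interpretation of $\Lambda_0$ as the exponential decay rate of hitting-time tails, together with a delicate inductive $L_2$ conditioning argument (Proposition \ref{prop:LDaux}, Lemma \ref{lem:keyhyper}). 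The energy half of your comparison does go through: conditioning on $\eta_{xy}=1$ and absorbing the flip term into $\EE_{\mathrm{env}}$ yields $p\,\EE_{\SRW}(\bar h,\bar h)\le C\,\EE_{\mathrm{walk}}(h,h)+\tfrac{C(1-p)}{\mu}\EE_{\mathrm{env}}(h,h)$ (note the constant multiplying $\EE_{\mathrm{env}}$ is necessarily of order $1/\mu$, not absolute, which is harmless since $\mu\le1$ but must be tracked). However, you leave unresolved exactly the step that is the crux of the theorem: a set of $\pi_{\mathrm{full}}$-measure $\delta$ can project onto all of $V$ (take one environment per vertex), so $\pi(\mathrm{supp}(\bar h))$ gives you nothing, and without a replacement for it the inequality $\EE_{\SRW}(\bar h,\bar h)\ge\Lambda_{\SRW}(\delta)\Var_\pi(\bar h)$ is simply unavailable. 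The fix is the paper's ``environmentally friendly'' set: put $B=\{x:\pi_p(\mathrm{supp}(h(x,\cdot)))\ge 1/2\}$, so $\pi(B)\le 2\delta$ by Markov, and observe that for $x\notin B$ Cauchy--Schwarz gives $\bar h(x)^2\le\Var_{\pi_p}(h(x,\cdot))$, so that $\bar h\1_{B^c}$ is absorbed into the fiber-variance term (hence into $\EE_{\mathrm{env}}/\mu$) while $\bar h\1_B$ has small support. This is not a cosmetic omission: it is the one idea that makes a cross-state-space comparison possible, and your write-up explicitly defers it.

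The tail analysis is also not correct as sketched. The heuristic that a test function with $\pi_{\mathrm{full}}(\mathrm{supp}(h))<\pi_*$ ``must be morally a function of $\eta$ alone'' is false (the example above has tiny support yet depends on $x$ in an essential way), so the reduction to the environment's log-Sobolev constant is not justified. Moreover, even granting it, integrating $\tfrac{2\,d\delta}{\delta\Lambda(\delta)}$ with $\Lambda(\delta)\gtrsim c_{\mathrm{LS}}^{\mathrm{env}}\log(1/\delta)$ over $\delta\in[4\pi_{\mathrm{full},*},4\pi_*]$ produces a factor $\log\bigl(\log(1/\pi_{\mathrm{full},*})/\log(1/\pi_*)\bigr)\approx\log|E|$ multiplying $|\log(p(1-p))|/\mu$; the piece $|\log(1-p)|\log|E|/\mu$ cannot be absorbed into either term of \eqref{e:mixmain} when $p$ is close to $1$. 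The paper sidesteps the small-$\delta$ regime entirely: it runs the chain for a burn-in time $t\asymp\mu^{-1}\log\bigl(|E|(1-\alpha)/\alpha\bigr)$ using only the $L_2$ mixing of the $p$-tilted hypercube (Lemma \ref{lem:ptiltedhypermixing}) to reduce the starting $L_2$ distance to $\sqrt{2/\pi_*}$, and then applies the spectral-profile integral only from $2\pi_*$ upward, with the residual $\mu^{-1}\log|E|$ and $\mu^{-1}|\log p|$ absorbed into $(\mu p)^{-1}t_{\mathrm{spectral-profile}}^{\SRW}\gtrsim(\mu p)^{-1}\log|V|$. You would need to replace your tail argument by something of this kind.
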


\begin{remark}\label{rem:1-p}
\rm{
        We note that the term $|\log \left(1-p \right)| /\mu$ is necessary, because the $L_\infty$ mixing time
        of the environment is at least of this order. However, if we were considering the total variation mixing time of the full system, then we would get rid of this extra term. We explain this at the end of the proof of this theorem in Section~\ref{s:proof1}.
        }
\end{remark}

We recall that the \emph{spectral-gap} of a reversible transition matrix $P$ (resp.\ generator $\cL$) is defined as the smallest positive eigenvalue of $I-P$ (resp.\ $-\cL$). Its inverse is called the \emph{relaxation-time}. We denote the relaxation-time of the full  process by $\rel^{\mathrm{full}} =\rel^{\mathrm{full},(\mu,p)} $ and that of the SRW by $\rel^{\SRW} $. We recall that generally for a continuous-time reversible Markov chain on a finite state space $\Omega$ with stationary distribution $\pi$ the relaxation-time determines the asymptotic exponential rate of convergence to equilibrium  in the following precise sense (cf., \cite[Lemmas 20.5 and 20.11]{levin}) for all $\eps \in (0,1)$ 
\[\rel | \log \eps | \le  \mix(\eps/2) \le   \mixi(\eps) \le \rel | \log (\pi_* \eps) |, \quad \text{where} \quad \pi_*:=\min_{x \in \Omega }\pi(x). \] 

The inequality above together with Theorem~\ref{thm:1} and the fact that \newline
$
t_{\mathrm{spectral-profile}}^{\SRW}(\epsilon) \asymp |\log(\epsilon)| \rel^{\rm{SRW}}$ for $\eps<1/|V|$
imply the following corollary.
\begin{corollary}
\label{cor:expanders}
Uniformly in $(\mu,p) \in (0,1]^2 $ and in $G=(V,E) $ we have that for all $k\in \N$
\begin{align*}
 \frac{1}{k}\mixfmupi \left(\frac{1}{|V|^k}\right) \lesssim (\mu p)^{-1} \rel^{\SRW} \log |V| +\frac{1}{\mu}|\log (1-p)|
\\ \asymp {(\mu p)^{-1}} \mixsrw\left(\frac{1}{|V|}\right) +\frac{1}{\mu}|\log (1-p)|. 
\end{align*}
\end{corollary}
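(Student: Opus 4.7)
The plan is to view the corollary as an essentially mechanical concatenation of three ingredients: Theorem~\ref{thm:1} (the comparison bound on $\mixfmupi$), the two-sided control of $t_{\mathrm{spectral-profile}}^{\SRW}$ in terms of the relaxation time at very small $\epsilon$, and the standard two-sided relationship between $\rel$ and the $L_\infty$ mixing time of SRW. No new probabilistic input is needed.

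First I would plug $\epsilon = 1/|V|^k$ into Theorem~\ref{thm:1} to obtain
\[
\mixfmupi\!\left(\tfrac{1}{|V|^k}\right) \le \frac{c_1}{\mu p}\, t_{\mathrm{spectral-profile}}^{\SRW}\!\left(\tfrac{1}{|V|^k}\right) + \frac{c_1}{\mu}|\log(1-p)|.
\]
Next I would bound the spectral-profile time: using $\Lambda(\delta)\ge 1/\rel^{\SRW}$ inside the definition \eqref{e:profiles} and the fact that $\pi_*^{\SRW}\ge 1/(2|E|)\ge 1/|V|^2$, the elementary integration gives
\[
t_{\mathrm{spectral-profile}}^{\SRW}\!\left(\tfrac{1}{|V|^k}\right) \le 2\rel^{\SRW}\log\!\left(\tfrac{1}{\eps\,\pi_*^{\SRW}}\right) \lesssim k\,\rel^{\SRW}\log|V|,
\]
which is consistent with the $\asymp$ statement recalled just before the corollary. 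Dividing the resulting inequality by $k$ (and using $1/(k\mu)\le 1/\mu$ on the second term) produces the first displayed bound.

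For the asymptotic equivalence in the second line, I would invoke the standard inequalities
\[
\rel^{\SRW}|\log\eps| \le \mixsrw(\eps/2) \le \mixisrw(\eps) \le \rel^{\SRW}|\log(\pi_*^{\SRW}\eps)|
\]
quoted in the paper. Setting $\eps = 1/|V|$ in the upper bound gives $\mixsrw(1/|V|)\lesssim \rel^{\SRW}\log|V|$ (again via $\pi_*^{\SRW}\ge |V|^{-2}$), while setting $\eps = 2/|V|$ in the lower bound gives $\rel^{\SRW}\log|V|\lesssim \mixsrw(1/|V|)$. Therefore $\rel^{\SRW}\log|V|\asymp \mixsrw(1/|V|)$, and substituting this equivalence into the first displayed bound yields the asserted $\asymp$.

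Since every step is a direct quotation or elementary manipulation, there is no real obstacle here; the only point requiring the mildest care is that the asymptotic $t_{\mathrm{spectral-profile}}^{\SRW}(\eps)\asymp|\log\eps|\rel^{\SRW}$ is stated for $\eps<1/|V|$, so for $k=1$ one uses the one-sided upper bound derived above from $\Lambda\ge 1/\rel^{\SRW}$ rather than the two-sided equivalence, which is enough because the corollary only claims $\lesssim$ and $\asymp$ of the final expression.
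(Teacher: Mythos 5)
Your proof is correct and follows essentially the same route the paper intends: Theorem~\ref{thm:1} at $\epsilon=1/|V|^k$, the bound $t_{\mathrm{spectral-profile}}^{\SRW}(\epsilon)\lesssim \rel^{\SRW}\log(1/(\epsilon\pi_*))$ obtained from $\Lambda(\delta)\ge 1/\rel^{\SRW}$ together with $\pi_*\ge |V|^{-2}$, and the standard two-sided comparison between $\rel^{\SRW}\log|V|$ and $\mixsrw(1/|V|)$. The only difference is that you derive the needed upper bound on the spectral-profile time directly from the definition rather than citing the stated asymptotic, which is a harmless (and slightly more self-contained) variation.
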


\begin{remark}
\rm{
The following quenched statement follows easily from Corollary \ref{cor:expanders}: \\ Let  $t=t(\mu,p,C):=C (\mu p)^{-1} \rel^{\SRW} \log |V| $. Define $d(t,x,\{\eta_s \}_{s \ge 0 })$ to be the total variation distance from $\pi$ of the walk co-ordinate $X_t$ at time $t$ of the full process, given the environment $\{\eta_s \}_{s \ge 0 } $ and given $X_0=x$. Then for some choice of universal constant $C>0$  for all $(\mu,p) \in (0,1] $ we have that
\[\max_{\eta}\P_{\eta}^{(\mu,p)}\left(\max_x d(t,x,\{\eta_s \}_{s \ge 0 }) > 1/|V|^{2} \right)<1/|V|. \]
Indeed, this follows by Markov's inequality and a union bound over $x$. }
\end{remark}

Recall that the \emph{log-Sobolev constant} of a continuous-time Markov chain on a finite state space $\Omega$, with generator $\cL$ and stationary distribution $\pi$ is given by \[c_{\mathrm{LS}}=\inf  \{\sfrac{\EE_{\cL}(h,h)}{\mathrm{Ent}_{\pi}(h^{2})} : h^{2} \in [0,\infty)^{\Omega} \text{ and non-constant}     \} ,\] where $\mathrm{Ent}_{\pi}(f)=\mathbb{E}_{\pi}[f \log(f/\E_\pi[|f|])]$ (with the convention that $0 \log 0=0$).

\begin{theorem}
\label{thm:cLS}
There exist positive constants $c_1$ and $c_2$ so that for every graph $G$ and all values of $(\mu,p)$ we have that
\[c_{\mathrm{LS}}^{\fu,(\mu,p)} \geq c_1\mu  \min \left\{   pc_{\mathrm{LS}}^{\SRW}, \frac{1 }{\log (1/\pi_*)\log(\frac{1}{p(1-p)}) }  \right\},  
\]
where $\pi_*$ above is defined to be $\min_{v \in V}\pi(v)$.  
Moreover, we have that 
\[
\rel^{\rm{full}, (\mu,p)} \leq c_2 \frac{1}{\mu p} \rel^{\rm{SRW}}.
\]
\end{theorem}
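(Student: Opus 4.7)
The plan is to exploit the decomposition $\cL_\fu=\cL_{\mathrm{Env}}+\cL_{\mathrm{walk}}$ of the generator of the full chain, which yields the additive Dirichlet-form identity $\EE_\fu=\EE_{\mathrm{Env}}+\EE_{\mathrm{walk}}$, together with the product structure $\pi_{\mathrm{full},p}=\pi\times\pi_p$. I would establish the two claims in parallel by the same template: variance decomposition for the relaxation-time bound, entropy decomposition for the log-Sobolev bound. The central technical ingredient in both cases is a Dirichlet-form comparison that relates $\EE^\SRW_\pi(f(\cdot,\eta))$, averaged over $\pi_p$, to $\EE_\fu$. For each edge $e=\{x,y\}\in E$, split on $\eta(e)\in\{0,1\}$ and apply $(a-b)^2\le 3[(a-c)^2+(c-d)^2+(d-b)^2]$ to the closed-edge branch with $a=f(x,\eta^{(e,0)})$, $b=f(y,\eta^{(e,0)})$, $c=f(x,\eta^{(e,1)})$, $d=f(y,\eta^{(e,1)})$, to obtain
\[
\E_{\pi_p}\!\bigl[(f(x,\eta)-f(y,\eta))^2\bigr]\le \tfrac{3}{p}\,\E_{\pi_p}\!\bigl[\eta(e)(f(x,\eta)-f(y,\eta))^2\bigr]+3(1-p)\!\!\sum_{v\in\{x,y\}}\!\!\E_{\pi_p}\!\bigl[(f(v,\eta^{(e,1)})-f(v,\eta^{(e,0)}))^2\bigr].
\]
Weighting by $\pi(x)P^\SRW(x,y)$ and summing, the first term reproduces $\tfrac{3}{p}\EE_{\mathrm{walk}}(f,f)$, while the second, using $1/|E|\le 2\pi(v)$ for $v\in V$ together with the explicit $\mu p(1-p)$ prefactor in $\EE_{\mathrm{Env}}$, contributes at most $\tfrac{C}{\mu p}\EE_{\mathrm{Env}}(f,f)$. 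Hence $\E_{\pi_p}[\EE^\SRW_\pi(f(\cdot,\eta),f(\cdot,\eta))]\lesssim \EE_\fu(f,f)/(\mu p)$.

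For the relaxation-time statement, apply the variance decomposition
\[
\Var_{\pi_{\mathrm{full},p}}(f)=\E_{\pi_p}\!\bigl[\Var_\pi(f(\cdot,\eta))\bigr]+\Var_{\pi_p}\!\bigl(\E_\pi f(\cdot,\eta)\bigr).
\]
The SRW Poincar\'e inequality at fixed $\eta$ gives $\Var_\pi(f(\cdot,\eta))\le \rel^\SRW\,\EE^\SRW_\pi(f(\cdot,\eta),f(\cdot,\eta))$, so averaging in $\eta$ and invoking the comparison above bounds the first term by $\lesssim \rel^\SRW\EE_\fu(f,f)/(\mu p)$. The environment is a product of two-state chains refreshing at rate $\mu$ to $\mathrm{Ber}(p)$, hence has spectral gap $\mu$, so $\Var_{\pi_p}(\E_\pi f)\le \EE_{\mathrm{Env}}(\E_\pi f,\E_\pi f)/\mu$, and Jensen's inequality gives $\EE_{\mathrm{Env}}(\E_\pi f,\E_\pi f)\le\EE_{\mathrm{Env}}(f,f)\le \EE_\fu(f,f)$. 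Combining the two contributions yields $\rel^\fu\lesssim \rel^\SRW/(\mu p)$, which is the second assertion.

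For the log-Sobolev bound, use the analogous entropy decomposition
\[
\mathrm{Ent}_{\pi_{\mathrm{full},p}}(f^2)=\E_{\pi_p}\!\bigl[\mathrm{Ent}_\pi(f^2(\cdot,\eta))\bigr]+\mathrm{Ent}_{\pi_p}\!\bigl(\E_\pi[f^2(\cdot,\eta)]\bigr).
\]
The first term is treated exactly as before, with $c_\mathrm{LS}^\SRW$ in place of $\rel^\SRW$, producing the contribution $\lesssim \EE_\fu(f,f)/(\mu p\,c_\mathrm{LS}^\SRW)$. For the second, set $g(\eta):=\|f(\cdot,\eta)\|_{L^2(\pi)}$; the reverse triangle inequality for the $L^2(\pi)$-norm yields $|g(\eta^{(e,1)})-g(\eta^{(e,0)})|\le\|f(\cdot,\eta^{(e,1)})-f(\cdot,\eta^{(e,0)})\|_{L^2(\pi)}$, so $\EE_{\mathrm{Env}}(g,g)\le \EE_{\mathrm{Env}}(f,f)\le \EE_\fu(f,f)$, and the log-Sobolev inequality for the environment gives $\mathrm{Ent}_{\pi_p}(g^2)\le \EE_\fu(f,f)/c_\mathrm{LS}^{\mathrm{Env}}$. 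The main obstacle is the lower bound on $c_\mathrm{LS}^{\mathrm{Env}}$ matching the statement: direct tensorization from the two-state Bernoulli chain supplies $c_\mathrm{LS}^{\mathrm{Env}}\gtrsim \mu/\log(1/(p(1-p)))$, and the additional $\log(1/\pi_*)$ factor appearing in the theorem would arise either by invoking Diaconis--Saloff-Coste to convert the spectral-gap estimate just proved in the first part into a log-Sobolev estimate, or via a Jerrum--Son--Tetali--Vigoda-style decomposition that interleaves the walk and environment LS inequalities; this combination step is where I would expect the book-keeping to be most delicate. Taking the minimum of the two contributions in the entropy decomposition then produces the stated lower bound on $c_\mathrm{LS}^\fu$.
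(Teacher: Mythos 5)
Your proposal is correct, but it is a genuinely different proof from the one in the paper. The paper does not use any Dirichlet-form decomposition: it deduces the log-Sobolev bound from the characterization $c_{\mathrm{LS}}\asymp\inf_{\eps}\Lambda_0(\eps)/\log(1/\eps)$ of [HP], combined with the spectral-profile comparison $\Lambda_{\fu,(\mu,p)}\gtrsim \mu p\,\Lambda_{\SRW}$ inherited from the proof of Theorem \ref{thm:1} (hence from the whole auxiliary-chain/regeneration-time machinery), and it treats sets of $\pi_{\fu,p}$-measure below $\pi_*/M$ separately by freezing the walk co-ordinate, decomposing such a set into fibers $\{v\}\times\mathrm{Env}(v,B)$, and invoking the log-Sobolev constant of the $p$-tilted hypercube; the factor $\log(1/\pi_*)$ in the statement is precisely the loss incurred at sets of measure comparable to $\pi_*$ in that argument. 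Your route — the variance/entropy decomposition for the product measure $\pi\times\pi_p$, the three-step path (open the edge, cross it, close it) giving $\E_{\pi_p}[\EE_\pi^{\SRW}(f(\cdot,\eta),f(\cdot,\eta))]\lesssim (\mu p)^{-1}\EE_{\fu}(f,f)$, the reverse-triangle-inequality contraction $\EE_{\mathrm{Env}}(\|f\|_{L^2(\pi)},\|f\|_{L^2(\pi)})\le\EE_{\mathrm{Env}}(f,f)$, and the tensorized two-point log-Sobolev constant $c_{\mathrm{LS}}^{\mathrm{Env}}\gtrsim\mu/\log(1/(p(1-p)))$ — is self-contained, elementary, and I have checked that each step goes through (note that the comparison must be applied to the $\pi_p$-average and not fiberwise, since for a fixed $\eta$ with closed edges the walk generator satisfies no log-Sobolev inequality at all; your argument correctly does the former). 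One correction to your last paragraph: you do not need any further device to "produce" the $\log(1/\pi_*)$ factor. Your argument already yields the \emph{stronger} bound $c_{\mathrm{LS}}^{\fu,(\mu,p)}\gtrsim\mu\min\{p\,c_{\mathrm{LS}}^{\SRW},\,1/\log(\tfrac{1}{p(1-p)})\}$, and since $\log(1/\pi_*)\ge\log 2$ this implies the theorem as stated; the detours you contemplate (converting the spectral gap via Diaconis--Saloff-Coste, or a further chain decomposition) would only weaken the estimate. The same remark applies to the relaxation-time claim, where your variance decomposition replaces the paper's appeal to $\Lambda_{(\mu,p)}(1/2)\gtrsim\mu p\,(\rel^{\SRW})^{-1}$ and $\Lambda(1/2)\le 2/\rel$; there you should just note that the environment term $\mu^{-1}\EE_{\fu}(f,f)$ is absorbed because $\rel^{\SRW}\gtrsim 1\ge p$.
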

 
\begin{remark}\rm{
        We note that for a simple random walk if $h_v(u):=\frac{\1_{u=v}}{\sqrt{\pi(v)}} $, then 
        \[
        c_{\rm{LS}}^{\rm{SRW}} \le \min_{v} \frac{\EE_{\cL}(h_{v} ,h_{v})}{\mathrm{Ent}_{\pi}(h_{v}^{2})}  = \frac{1}{\log (1/\pi_*)},
        \]
        and hence if $p$ is bounded away from $1$, then the lower bound above for the log-Sobolev constant of the full process becomes $c_{3}\mu pc_{\rm{LS}}^{\rm{SRW}}$.
        }
\end{remark}

Let $G=(V,E)$ be an $n$-vertex connected graph. We say that $G$ is \emph{vertex-transitive} if the action of its automorphism group on its vertices is transitive. Denote the volume of a ball of radius $r$ in $G$ by $V(r)$. Denote the diameter of $G$ by $\gamma=\inf\{r:V(r) \ge n \}$. 
Following Diaconis and Saloff-Coste~\cite{moderate}  we say that  $G$ has $(c,a)$-\emph{moderate growth} if $V(r) \ge cn(r/\gamma)^a $ for all $r$. Breuillard and Tointon \cite{BT} proved that for Cayley graphs of fixed degree, this condition is equivalent in some quantitative sense to the simpler condition that $n \le \beta \gamma^{\alpha} $ for some $\alpha,\beta >0$. Tessera and Tointon recently extended this result to vertex-transitive graphs \cite{tessera2019finitary}.
 We note that Diaconis and Saloff-Coste~\cite{moderate} proved that for vertex-transitive graphs of $(c,a)$-moderate growth and of degree $d$
\[
d^{-1} \mixisrw \lesssim_{a,c} \gamma^2 \lesssim_{a,c} \rel^{\SRW} \le \mixisrw.
\]

Denote the percolation cluster of vertex $x$ by $K_x$ and its edge boundary by~$\partial K_x $. We identify the cluster with the vertices lying in it, and denote the cardinalities of  $K_x$ and $\partial K_x $ by $|K_x|$ and $|\partial K_x|$, respectively. 
We also denote $M_p=\pi_p(|\partial K_x||K_{x}|^{2}) \le \pi_p(d|K_{x}|^{3})  $ and $N_p=\pi_p(|K_{x}|)$ (by transitivity these quantities are independent of $x$), where $d$ is the degree and where   $\pi_p(f)=\sum_{\eta} \pi_p(\eta)f(\eta)$ denotes expectation w.r.t.\ $\pi_p$ of $f:\{0,1 \}^E \to \R $.

 \begin{theorem}[Moderate growth vertex-transitive graphs, subcritical regime]
\label{thm:mod}
Let $a,b,c\in \R_+$.  Let $G=(V,E)$ be a connected vertex transitive graph of degree $d$, $(c,a)$-moderate growth and diameter~$\gamma$.
Suppose that $|\log (1-p)|\leq \gamma^2$, $M_p \leq b$ and $N_p\leq \gamma/8$. Then  
\[ 
\mix^{\mathrm{full},(\mu,p),(\infty)} \asymp_{a,b,c,d} (\mu p)^{-1} \rel^{\SRW} \asymp_{a,b,c}\rel^{\mathrm{full},(\mu,p)}.
\]
Moreover, even if $p$ is not subcritical, we still have that $\mix^{\mathrm{full},(\mu,p),(\infty)}\lesssim_{a,c,d} (\mu p)^{-1} \rel^{\SRW}   $.
\end{theorem}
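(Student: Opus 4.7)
The plan is to prove the theorem in two parts: the upper bounds on $\mixfmupi$ and $\rel^{\mathrm{full},(\mu,p)}$ follow from the general comparison results of Theorems~\ref{thm:1} and~\ref{thm:cLS}, while the matching lower bound on $\rel^{\mathrm{full},(\mu,p)}$ (which needs the subcriticality hypotheses $M_p\le b$ and $N_p\le \gamma/8$) will come from a Rayleigh quotient calculation for a test function that is constant on open-percolation clusters.

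\textbf{Upper bounds.} Recall from Diaconis--Saloff-Coste (as quoted above) that on a moderate-growth vertex-transitive graph,
\[
\gamma^2 \;\asymp_{a,c,d}\; \rel^{\SRW} \;\asymp_{a,c,d}\; \mixisrw \;\asymp_{a,c,d}\; t_{\mathrm{spectral-profile}}^{\SRW}(1/4).
\]
Plugging these and the hypothesis $|\log(1-p)|\le\gamma^2$ into Theorem~\ref{thm:1} at $\epsilon=1/4$ gives
\[
\mixfmupi(1/4) \;\le\; \frac{c_1}{\mu p}\, t_{\mathrm{spectral-profile}}^{\SRW}(1/4) + \frac{c_1}{\mu}|\log(1-p)| \;\lesssim_{a,c,d}\; \frac{\gamma^2}{\mu p},
\]
which establishes the upper bound in the main $\asymp$ and also the ``moreover'' statement, since neither $M_p$ nor $N_p$ was used. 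Theorem~\ref{thm:cLS} directly supplies $\rel^{\mathrm{full},(\mu,p)} \lesssim (\mu p)^{-1}\rel^{\SRW}$.

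\textbf{Lower bound via cluster averaging.} Pick $\chi: V \to \R$ with $\E_\pi\chi = 0$, $\Var_\pi\chi \asymp_{a,c} 1$ and Lipschitz constant $L := \max_{x\sim y}|\chi(x)-\chi(y)| \lesssim_{a,c} 1/\gamma$; such a $\chi$ exists on any moderate-growth vertex-transitive graph (one can take a smoothed graph-distance function from a fixed base point, using vertex-transitivity to obtain the variance lower bound). Define
\[
f(x,\eta) := \bar\chi(K_x(\eta)) = \frac{1}{|K_x(\eta)|}\sum_{y \in K_x(\eta)} \chi(y).
\]
Because $f$ is constant on each cluster and walker jumps stay within clusters, the walker part of $\EE^{\mathrm{full}}(f,f)$ vanishes. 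For the environment part, a resampling of an edge $e=\{u,v\}$ affects $f$ only when $u,v$ lie in distinct clusters $A, B$ of the pre-refresh configuration; the change on $A$ equals $|B|(\bar\chi(B)-\bar\chi(A))/(|A|+|B|)$, of absolute value at most $L\,|B|$ (and by symmetry $\le L\,|A|$ on $B$), so the sum of squared changes against $\pi$ is $\lesssim L^2 (|A|+|B|)^3/|V|$ per pair. Summing over edges and using $\sum_K|\partial K||K|^3 = \sum_u|\partial K_u||K_u|^2$ together with vertex-transitivity yields
\[
\EE^{\mathrm{full}}(f,f) \;\lesssim\; \mu p\, L^2\, M_p \;\lesssim_{a,b,c}\; \mu p/\gamma^2.
\]
For the variance, write $\Var(f) = \Var(\chi) - |V|^{-1}\E[E(\eta)]$ where $E(\eta) := \sum_K\sum_{y\in K}(\chi(y)-\bar\chi(K))^2$. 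When the environment has more than one cluster, $|\partial K|\ge 1$ for each $K$ gives $\sum_K|K|^3 = \sum_u|K_u|^2 \le \sum_u|\partial K_u||K_u|^2$ of expectation $|V|M_p$, contributing at most $L^2 M_p \lesssim b/\gamma^2$; when there is a single cluster, $E(\eta) = |V|\Var(\chi)$, and by Markov this event has probability $\le N_p/|V| \le \gamma/(8|V|)$, contributing $O(\gamma/|V|)$. Both error terms are $o(1)$ since $|V|\asymp_{a,c}\gamma^a$ with $a\ge 1$, so $\Var(f) \gtrsim_{a,b,c} 1$. Hence the Rayleigh quotient is $\lesssim_{a,b,c}\mu p/\gamma^2$, giving $\rel^{\mathrm{full},(\mu,p)} \gtrsim_{a,b,c} \gamma^2/(\mu p) \asymp (\mu p)^{-1}\rel^{\SRW}$. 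The mixing-time lower bound follows from the standard inequality $\rel^{\mathrm{full},(\mu,p)}\log 2 \le \mixfmup(1/4) \le \mixfmupi(1/4)$.

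\textbf{Main obstacle.} The delicate steps are the construction of the slowly varying test function $\chi$ with the required Lipschitz and variance bounds on a general moderate-growth vertex-transitive graph, and the careful bookkeeping of the Dirichlet form and variance estimates via the cluster identity $\sum_K|\partial K||K|^3 = \sum_u|\partial K_u||K_u|^2$. The hypothesis $M_p\le b$ enters precisely through controlling the cubic cluster moment that appears in both computations, while $N_p\le \gamma/8$ is used to rule out the pathological single-cluster event that would otherwise kill the variance of $f$.
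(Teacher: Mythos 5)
Your upper-bound half is exactly the paper's argument (Diaconis--Saloff-Coste plus Theorem~\ref{thm:1}, with $|\log(1-p)|\le\gamma^2$ absorbing the extra term, and Theorem~\ref{thm:cLS} for $\rel^{\mathrm{full},(\mu,p)}$), and your lower bound uses the same test function as the paper's Lemma~\ref{lem:mod}: a cluster average of a $1/\gamma$-Lipschitz potential (the paper takes $\chi=d_G(\cdot,o)$ unnormalized). Your Dirichlet-form estimate is correct and in fact slightly cleaner than the paper's, since the pointwise identity $\sum_K|\partial K|\,|K|^3=\sum_u|\partial K_u|\,|K_u|^2$ lets you avoid the conditional stochastic-domination step \eqref{e:BK}.

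However, your variance lower bound has a genuine gap. The exact decomposition $\Var(f)=\Var_\pi(\chi)-|V|^{-1}\E[E(\eta)]$ forces you to control the \emph{within-cluster variance} of $\chi$, and your bound for it is $L^2 M_p\asymp_{a,c} M_p/\gamma^2$. This must be compared additively to $\Var_\pi(\chi)\asymp_{a,c}1$, so your argument needs $M_p\le c(a,c)\,\gamma^2$. That is not implied by the hypotheses: the theorem only assumes $M_p\le b$ with the implied constants allowed to depend on $b$, and since $M_p$ is a third-moment quantity while $N_p\le\gamma/8$ only constrains the first moment, the regime $\gamma^2\ll M_p\le b$ is perfectly admissible (heavy-tailed cluster sizes near criticality). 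In that regime your lower bound on $\Var(f)$ is vacuous (it can be negative), and no constant depending on $b$ rescues it; note also that your closing claim ``$|V|\asymp_{a,c}\gamma^a$ with $a\ge1$'' is not part of the moderate-growth hypothesis. The paper sidesteps this entirely: it controls only the conditional mean, $|\E_\eta f(x,\eta)-d_G(x,o)|\le N_p\le\gamma/8$ (a first-moment bound), and then gets $\Var(f)\gtrsim_{a,c}(\gamma-4N_p)^2$ from a two-point argument between the sets $\{d_G(\cdot,o)\le\gamma/4\}$ and $\{d_G(\cdot,o)\ge3\gamma/4\}$, each of constant $\pi$-mass by moderate growth and vertex-transitivity. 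So $M_p$ enters only through the Dirichlet form, where a large $b$ merely worsens the constant, never kills the bound. Replacing your variance step by this conditional-mean/two-point argument closes the gap; the rest of your proof stands.
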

We note that the condition $|\log (1-p)|\leq \gamma^2$ is very mild, and typically follows from either of the two other conditions (provided $b$ is much smaller than $n$). In applications $\gamma \gg 1$ and we think of $b$ as a constant. In this case the conditions $N_p\leq \gamma/8$ and $|\log (1-p)|\leq \gamma^2$ follow from the condition $M_p \leq b$. A more refined statement, with an explicit dependence on $b$ is given in Lemma \ref{lem:mod}.
\begin{theorem}[Hypercube]
\label{thm:hyp}
Let $G$ be the hypercube $\{0,1\}^d$. Then uniformly in $(\mu,p) \in (0,1]^2 $
we have that 
\[
\mixfmupi \lesssim \frac{1}{\mu p} d \log d +\frac{1}{\mu}|\log\left(1-p\right)|. 
\]
\end{theorem}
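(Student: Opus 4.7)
The approach is to apply Theorem~\ref{thm:1}, which bounds $\mixfmupi(\epsilon)$ by $(\mu p)^{-1}\,t_{\mathrm{spectral-profile}}^{\SRW}(\epsilon)$ plus the environment term $\mu^{-1}|\log(1-p)|$. Taking $\epsilon = 1/4$, the theorem follows immediately once we establish the spectral-profile estimate
\[
t_{\mathrm{spectral-profile}}^{\SRW}(1/4) \;\lesssim\; d\log d
\]
for simple random walk on $G=\{0,1\}^d$, which is what I would focus on for the rest of the argument.

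To bound $\Lambda^{\SRW}(\delta)$, I would combine two standard ingredients: Gross's log-Sobolev inequality on the hypercube, giving $c_{\mathrm{LS}}^{\SRW}\asymp 1/d$ by tensorisation of the two-point LSI, and an elementary support-restriction gain for the entropy. Concretely, for any non-zero $h$ with $A:=\mathrm{supp}(h)$ of measure $\pi(A)=\delta<1$, a direct calculation gives the entropy decomposition
\[
\mathrm{Ent}_\pi(h^2) \;=\; \pi(A)\,\mathrm{Ent}_{\pi_A}(h^2) + \|h\|_{L^2(\pi)}^2\log(1/\delta) \;\ge\; \Var_\pi(h)\log(1/\delta)
\]
(using $\mathrm{Ent}_{\pi_A}(h^2)\ge 0$ and $\|h\|_{L^2(\pi)}^2\ge\Var_\pi(h)$). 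Feeding this into the log-Sobolev inequality $\EE_{\SRW}(h,h)\ge c_{\mathrm{LS}}^{\SRW}\mathrm{Ent}_\pi(h^2)$ and combining it with the bound $\EE_{\SRW}(h,h)\ge\lambda_1^{\SRW}\Var_\pi(h) = (2/d)\Var_\pi(h)$ (which is needed when $\delta$ is close to $1$ and the first bound degenerates) yields
\[
\Lambda^{\SRW}(\delta)\;\gtrsim\;\frac{1+\log_+(1/\delta)}{d}
\qquad\text{for all } \delta\in(0,16].
\]

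Substituting into~\eqref{e:profiles} with $\pi_*=2^{-d}$ gives
\[
t_{\mathrm{spectral-profile}}^{\SRW}(1/4)\;\lesssim\; d\int_{4\cdot 2^{-d}}^{16}\frac{\mathrm{d}\delta}{\delta(1+\log_+(1/\delta))}.
\]
The piece on $[1,16]$ contributes $O(d)$; on $[4\cdot 2^{-d},1]$ the substitution $u=\log(1/\delta)$ transforms the integrand into $\mathrm{d}u/(1+u)$ over an interval of length $\asymp d$, contributing $\asymp d\log d$. Combining these two pieces gives the desired bound and, substituted into~\eqref{e:mixmain}, the theorem. The main subtlety is that one \emph{must} retain the $\log(1/\delta)$ factor coming from the support restriction in the spectral-profile estimate; dropping it and using only $\Lambda^{\SRW}\gtrsim c_{\mathrm{LS}}^{\SRW}\asymp 1/d$ would produce the strictly weaker bound $d\log(1/\pi_*)=d^2$, so the Jensen-type improvement is what imports the strength of Gross's LSI into the spectral-profile framework with the sharp $\delta$-dependence.
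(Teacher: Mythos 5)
Your proposal is correct and follows the paper's route exactly: apply Theorem~\ref{thm:1} with $\eps=1/4$ and bound $t_{\mathrm{spectral-profile}}^{\SRW}(1/4)\lesssim d\log d$ for the hypercube. The only difference is that the paper cites a reference for that spectral-profile estimate, whereas you derive it via the standard log-Sobolev/entropy-decomposition argument (which checks out, including the integral computation), so your write-up is simply a more self-contained version of the same proof.
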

It is conjectured \cite[p.\ 59]{hypercube} that  when $p=c/d $ for $c>1$ the mixing time of the SRW on the giant component is w.h.p.\ $\Theta(d^2)$, however no polynomial in $d$ upper bound is known for the static case.
\subsection{Remarks and open problems}\label{sec:remarks}

(1) \textbf{Comparison with previous results:} We are able to extend the results of Peres, Stauffer and Steif~\cite{PSSsub} about $\Z_n^d$  to the more general setup of vertex-transitive graphs of moderate growth via a simpler proof, while also eliminating the requirement that $p$ is subcritical. Our results for the supercritical case complement the results on Peres, Sousi and Steif \cite{PSSsuper,PSSexit}. When $\mu$ is of order~1 our Theorem \ref{thm:mod} provides better bounds, but their results are quenched and hence not directly comparable to ours (also, the main result from \cite{PSSsuper} required $p$ to be supercritical in a certain quantitative way, namely that the expected size of the giant component is at least $|V|/2$). We note that we give the first bounds about the mixing time in the critical case.

Biskup and Rodriguez \cite{BR18} study continuous-time random walk on $\mathbb{Z}^d$ for $d \ge 2$ with random, symmetric jump rates which are time dependent, stationary, ergodic and bounded from above, but are not assumed to be positive. Under mild assumptions on the environment (meant to eliminate the possibility that a certain edge has rate 0 for a long period of time) they prove a quenched invariance principle.

 We consider a particular case of this model in remark (9) below, but do so on general vertex-transitive graphs (we also do not require the weights to be bounded). We consider the mixing time of the walk co-ordinate in this model. The main difference is that while we consider the less general (and simpler) case in which the rates of different edges are independent, we provide an upper bound on the mixing time with an explicit dependence on the update rate of the rates.

\noindent (2) \textbf{Refining \eqref{e:hitmain2}:}
It is quite possible that for all regular and all bounded degree graphs one has that $t_{\mathrm{spectral-profile}}^{\SRW} \lesssim \hitsrw $. We strongly believe this to be the case when $G$ is a Cayley graph. Whenever this is the case,~\eqref{e:hitmain2} reads as 
\[
\hitfmup \lesssim  (\mu p)^{-1}\hitsrw.
\]
This follows from Theorem~\ref{thm:1} and Remark~\ref{rem:1-p}.
  We note that this always holds up to a $\log \log |V|$ factor, as by Kozma \cite{kozma} for all graphs   $t_{\mathrm{spectral-profile}}^{\SRW}\lesssim \mixisrw \log \log |V|$, while  $ \mixisrw  \lesssim \hitsrw   $  \cite[Theorem 10.22]{levin}.

\noindent (3)\textbf{Hitting times lower bound for subcritical $p$:} It is reasonable to expect that for $p$ which is subcritical, say in the sense that the second moment $M$ of the size of a cluster is bounded, we have that  $\hitfmup \gtrsim_M  (\mu p)^{-1}\hitsrw $.   (Note that $M$ is increasing in $p$. Hence the $p^{-1}$ term on the r.h.s.\ is meant to capture the dependence on $p$ for $p$ close to 0. Namely, for such $p$ we expect that $\hitfmup \gtrsim  (\mu p)^{-1}\hitsrw$.)

\noindent (4) \textbf{Heuristics for the $(\mu,p)$ dependence:} The following heuristics explain the appearance of the terms $\mu$ and $p$ in our main results when the maximal degree is bounded. Informally, when $p$ is subcritical, viewing the walk when it moves from one cluster to another gives a new random process with small increments, which {(as a discrete-time process)} we expect in many cases to have the same order mixing time as SRW. However, as we now explain,  the time {between  consecutive steps of this process} is typically of order $\sfrac{1}{ \mu p}$. Indeed, the walk co-ordinate of the full process is typically at a cluster $\mathcal{C}$ of size $O(1)$. Using the fact that the degree is bounded, it requires order $\frac{1}{|\mathcal{C}|\mu p}=\Omega(\frac{1}{\mu p})$ time units until an edge adjacent to  $\mathcal{C}$ becomes open.

\noindent (5) \textbf{Direct comparison of mixing times:} It is natural to wonder whether stronger results of the form $\mixfmup \lesssim (\mu p)^{-1}  t_{\mathrm{mix}}^{\SRW} +\mu^{-1}\log |V| $ and $\mixfmupi$ $\lesssim (\mu p)^{-1}  t_{\mathrm{mix}}^{\SRW,(\infty)} +\mu^{-1}|\log (1-p)|$ hold. Alas, it appears that small variants of the examples from \cite{DP,HP,Hsen} can be used to show that this does not hold in general. However, if $G$ is vertex-transitive we believe this should indeed be the case with the implicit constant depending on the degree.

\noindent (6) \textbf{Commute-times:} It follows from our argument that for every $x,y \in V$ the expected time it takes the walk component of the full process to reach $y$ from $x$ and then return to $x$ is at most $O_{\mu}( p^{-1} \E_x^{\SRW}[T_y]+\E_y^{\SRW}[T_x] )$ and also at most $O(\frac{1}{ \mu p}[\E_x^{\SRW}[T_y]+\E_y^{\SRW}[T_x]+t_{\mathrm{spectral-profile}}^{\SRW} ]) $ for $p$ bounded away from~$1$.

\noindent (7) \textbf{Extending Theorem \ref{thm:hyp} to self-products:} The Cartesian product $G_1 \times G_2=(V',E')$ of two graphs $G_i=(V_i,E_i) $ is defined via $V':=V_{1}\times V_2 $ and \[E':=\{\{(v_{1},v_{2}), (u_{1},u_{2})\}:v_1=u_1 \in V_1 \text{ and }v_2u_2 \in E_2, \text{ or vice-versa} \} .\]
For a graph $G=(V,E)$ we denote the $n$-fold self (Cartesian) product of $G$ with itself by $G_{\otimes n}=(V^n,E(G_{\otimes n}))$. That is $G_{\otimes n}=G_{\otimes (n-1)} \times G=G \times \cdots \times G $. Note that the $n$-dim hypercube is the $n$-fold self-product of the complete graph on two vertices with itself. The proof of Theorem \ref{thm:hyp} can easily be extended to show that uniformly in $(\mu,p) \in (0,1]^2$, for  $G_{\otimes n}$ with $G=(V,E)$
we have that 
\[
\mixfmupi \lesssim_{|V|} \frac{1}{\mu p} n\log n +\frac{1}{\mu}|\log\left(1-p\right)|.\]
In fact, this can be derived as an immediate corollary from either \eqref{e:mixmain} (in a similar fashion to the derivation of Theorem \ref{thm:hyp}) or from Theorem \ref{thm:cLS}. 

\noindent (8) \textbf{Allowing $\mu>1$:} We assume throughout that $\mu \in (0,1]$. Our analysis of the case $\mu=1$ can be used almost verbatim to treat $\mu>1$, in which case terms of the form $\frac{1}{\mu}$ should be replaced by $\frac{1+\mu}{\mu}$.

\noindent (9) \textbf{Random rates model:} We now discuss a certain extension of our results to a \emph{random walk on dynamical random rates model}.
Let $\nu$ be some law supported on $\mathbb{R}_+$.
Consider the case that each edge $e$ is updated at rate $\mu$ and when it is updated, it is assigned a random rate $r_e$ with law $\nu$ independently. Given that the current location of the walk co-ordintae is $x$, and that the current environment is $\eta$, where $\eta(e)$ denotes the rate of the edge $e$, the walk co-ordinate moves to vertex $y$ at rate $\eta(xy)$ for all $xy \in E$. Observe that the case that $\nu$ is Bernoulli($p$) gives rise to random walk on dynamical percolation. Let $X \sim \nu$.

We consider the case that the base graph $G=(V,E)$ is vertex-transitive of degree $d$, that $\nu(0) = 1- p$, that $\mathbb{E}[X]<\infty$ and that
$\mathbb{E}[e^{a (X/\mathbb{E}[X])}] \le b$ for some $a,b>0$. Let $\kappa(\mu)$ be the expected time until the walk co-ordinate leaves the origin for the first time in the variant of the model in which for each vertex $v$, at rate $\mu$ all of the rates of the edges incident to $v$ are refreshed simultaneously, independently according to the law $\nu$. Let $M:=C'(a,b)\frac{d\mathbb{E}[X^2 \mid X>0]}{\mathbb{E}[X]}$, where $C'(a,b)$ is a constant depending only on $a,b$, to be determined later. (Note that $M$ implicitly depends on $\nu(0)$.)

In \S\ref{s:randomrates} we extend our analysis to this model and show that (for some appropriate $C'(a,b)$ above) the total variation mixing time of the walk co-ordinate is at most \[ \frac{C(a,b)d (M +\mu)\kappa(\mu \vee M)}{ \mu }t_{\mathrm{spectral-profile}}^{\mathrm{SRW}},\] where $C(a,b)$ depends only on $(a,b)$, and where $A \vee B:=\max \{A,B\}$.

 Moreover, if $\nu$ has a finite support, this is also an upper bound on the total variation mixing time of the full process, whereas the $L_2$ mixing time of the full process is at most
\begin{equation}
\label{e:rrmt}
\frac{C(a,b)d (M +\mu)\kappa(\mu \vee M)}{ \mu }t_{\mathrm{spectral-profile}}^{\mathrm{SRW}}+\frac{Cd|\log \min_{x:x \neq 0}\nu(x) |}{ \mu}.
\end{equation}

\begin{question}
Let $G$ be an infinite connected vertex-transitive graph. Assume that initially the environment is stationary. Let $P_t(o,o) $ and $Q_t(o,o) $ be the return probability to the origin by the (continuous-time) SRW and by the walk co-ordinate of the full process, respectively. Is it the case that for some $C=C(G,p,\mu) \ge 1 $ we have
\[\forall \, t \ge 0, \quad \sfrac{1}{C} P_{Ct}(o,o)\le Q_t(o,o) \le CP_{t/C}(o,o) \, ?  \]
\end{question}

\begin{question}
What is the order of $\mixfmup $ for $G=\Z_n^d$ when $p $ is the critical probability $p_c$ for Bernoulli bond percolation on $\Z_n^d$?
\end{question}
We believe that for all $d$ the order of the mixing time when $p=p_c$ and $\mu=o(1)$ lies strictly between its values in the subcritical and supercritical regimes (for the same $\mu$), and that it has a complicated dependence on $\mu$ and $d$. (It is possible that the dependence on $d$ becomes simple once mean-field behavior kicks in.) This might seem surprising at first glance due to the lack of exceptional times (i.e.\ times at which an infinite cluster exists) for critical dynamical percolation on $\mathbb{Z}^d$ in high dimension \cite{Noinfinite}. We wish to express our immense gratitude to Gabor Pete for relevant discussions.

The \emph{cover time} $\tau_{\mathrm{cov}}^{\mathrm{SRW}}(G)$ of a graph $G=(V,E)$ is the first time by which every vertex $v \in V$ has been visited by SRW on $G$.  Let $t_{\mathrm{cov}}^{\mathrm{SRW}}(G):=\max_{v \in V}\mathbb{E}_v[\tau_{\mathrm{cov}}^{\mathrm{SRW}}(G)]$ be its worst case expectation. We can similarly define  $\tau_{\mathrm{cov}}^{(\mu,p)}(G)$ to be the first time by which every vertex $v \in V$ has been visited by walk co-ordinate of the full process with parameters $(\mu,p)$ (i.e.\ $(v,\eta)$ has been visited for some $\eta$) and $t_{\mathrm{cov}}^{(\mu,p)}(G):=\max_{(v,\eta)} \mathbb{E}_{(v,\eta)}^{(\mu,p)}t_{\mathrm{cov}}^{(\mu,p)}(G)$.
\begin{question}
Is it the case that there exist constants $C(\mu)>0$ such that for all finite graphs $G$ we have that
\[ \forall \, \mu \in (0,1],p \in (0,1], \qquad t_{\mathrm{cov}}^{(\mu,p)}(G) \le \frac{C(\mu)}{p} t_{\mathrm{cov}}^{\mathrm{SRW}}(G) \text{ ?} \]
\end{question} 
We comment that if $G=(H,E)$ is a Cayley graph of an Abelian group $H$, then the auxiliary process from \S~\ref{s:auxchain} is reversible (see Remark \ref{r:abelian}). In~\cite{DLP} the authors give a general comparison principle for cover times (of two reversible Markov chains on the same state space, provided the effective resistances of one chain are pairwise smaller than some constant multiple of those of the other). 
Thus using our comparison of the transition probabilities between the auxiliary chain and the SRW from Lemma \ref{lem:PauxPsrw} (together with Theorem \ref{lem:commutetime} which relates this to a comparison of effective resistances), it follows from \cite{DLP} that $ t_{\mathrm{cov}}^{(\mu,p)}(G) \le \frac{C(\mu)}{p}t_{\mathrm{cov}}(G)$, by considering the cover time of $G$ for the auxiliary walk. We omit the details.
\section{Overview of our approach}

We believe that our approach may be relevant for other models of random walks on evolving graphs, in situations in which the mixing time of the environment is smaller than that of the walk co-ordinate.
We first explain why we may concentrate on the case that $\mu=1$. Let
$\cL_{(\mu,p)}$ be the infinitesimal generator of the full process with edge probability $p$ and update rate $\mu$. One can readily see that if $\mu_1<\mu_2$ then the corresponding rates satisfy \[\forall \, x,x' \in V,\, \eta,\eta' \in \{0,1\}^E, \quad \cL_{(\mu_2,p)}((x,\eta),(x',\eta')) \le \sfrac{\mu_2}{\mu_1} \cL_{(\mu_1,p)}((x,\eta),(x',\eta')). \]
Using this and some general theory we are able to transfer hitting time estimates from $\mu=1$  to ones for $\mu<1$, at a cost of a $\mu^{-1}$  multiplicative term and an additive term of order of $\mixfmup $. As for the mixing time, we obtain an upper bound on the spectral-profile bound when $\mu=1$ by a certain comparison with SRW. Again, general theory allows us to then translate this into a bound for general $\mu<1$ at a cost of a $\mu^{-1}$  multiplicative term. 

Now consider the case that $\mu=1$. As we now explain, we may also consider the case  that $\eta_0 \sim \pi_p $. It is tempting to argue that it suffices to wait until every edge is updated once, which takes order $\log |E| $ time units, and then the environment is stationary. However, since the walk co-ordinate is dependent on the environment some difficulties arise when trying to formulate this. Using the fact that $\mu = 1$ we show that at some stopping time slightly larger than the first time at which each edge is updated at least once, the environment is stationary and is independent of the position of the walk. This explains why when bounding hitting times we may assume that $\eta_0 \sim \pi_p $.

 We define a sequence of increasing stopping times (w.r.t.\ an enlarged filtration) $(\tau_i: i \in \Z_+ )$ such that the following hold:
\begin{itemize}
\item[(1)]  $\eta_{\tau_i} \sim \pi_p $ and is independent of $X_{\tau_i} $ for all $i$. 
\item[(2)] $\tau_0=0$ and $\tau_{i+1}-\tau_i $ are i.i.d.\ such that $\E\left[ e^{\delta \tau_1}\right]<\infty $ for some $\delta>0$. 
\item[(3)] $Y_i=X_{\tau_i} $ is a Markov chain with the same stationary distribution as SRW. We call each such $\tau_i$ a regeneration time. 
\end{itemize}
By construction, we will have  for all $i \in \N$ that  all of the edges examined by the walk co-ordinate (i.e.\ that the walk co-ordinate attempted to cross) at some time during $(\tau_{i-1},\tau_i)$ have been refreshed since the last time in  $(\tau_{i-1},\tau_i)$  at which they were examined. This, along with the assumption that $\eta_0 \sim \pi_p $, imply the crucial property (1) above.     

While we do not believe the auxiliary chain $(Y_i)_{i \in \Z_+}$ to be reversible in general, we show that $\pi$ is its stationary distribution, and that the transition matrix $Q$ of its additive-symmetrization satisfies that $Q(u,v) \gtrsim \sfrac{1}{p \deg u} $ which turns out to be sufficient in order to obtain a comparison of its hitting times (more precisely, of its commute times) and of its spectral-profile with that of SRW (at a price of an $O( \sfrac{1}{p}) $ factor). To turn these into upper bounds on expected hitting times  for  $(Y_i)_{i \in \Z_+}$ (rather than for its additive symmetrisation) we use the fact that the commute times for the symmetrisation are always as large as they are for the original chain.   As the chain $(Y_i)_{i \in \Z_+}$ is constructed by viewing the chain $(X_t)_{t \in \R_+ }$ at a nice sequence of stopping times, we then easily translate hitting time bounds for $Y$ into ones for  $X$ (for hitting times of the form $T_{y \times \{0,1\}^E }$).

For the mixing time, we derive a comparison of the spectral profile bound  $t_{\mathrm{spectral-profile}}^{\fu,(1,p)}$ for the full process with parameters $(1,p)$ with that of SRW  $t_{\mathrm{spectral-profile}}^{\SRW}$. The comparison method is a standard method for comparing analytic quantities that have an extremal characterization involving the Dirichlet form, between two Markov chains. Despite the fact that the auxiliary chain may be non-reversible, using the comparison method we are able to effortlessly compare the spectral-profiles of SRW with that of the auxiliary chain. We wish to use the auxiliary chain as a link to relate the SRW with the full process. 

A major obstacle in doing so is that the former has state space $V$, while the latter $V \times \{0,1\}^V $. 
 Unfortunately, the comparison method requires the two considered chains to have the same state space, or at least similar state spaces (e.g.,   random walks on two quasi-isometric graphs might have different state spaces, but with more effort can still be compared). We are unaware of  any previous works establishing a comparison argument between Markov chains with very different state spaces. 

Our solution to this difficulty hinges on a probabilistic interpretation of     the spectral profile  in terms of the rate of exponential decay of the tail of hitting times starting from the stationary distribution of the chain, which we review in Section~\ref{s:profiles}. This interpretation allows us to use the the auxiliary chain  as a link between the full process and the SRW.

 For each set $A \subseteq V \times \{0,1\}^E   $ we construct a certain set $B \subseteq V $ such that $\pi(B^{c})\lesssim \pi_{\fu,p}(A^{c}) $ and show that the rate of exponential decay of the tail of $T_A$ for the full process can be controlled via that of $T_B$ for the aforementioned auxiliary chain $Y$. The set $B$ is defined to be the set of all vertices $v \in V$ such that $\{w \in \{0,1\}^E : (v,w) \in A \} $ has  $\pi_p$ probability at least $1/4$. Let $C \ge 1 $ be some absolute constant. Let \[\gamma_{A}:=\sup \{a:\mathbb{E}_{\pi}[e^{aT_{D}}]< \infty  \text{ for all }D \subset V \text{ such that }\pi(D^{c}) \le C \pi_{\fu,p}(A^{c}) \}, \] where the expectation is taken w.r.t.\ SRW. The connection of this quantity to the spectral-profile of SRW is explained in Section~\ref{s:profiles}. It follows from this connection that to prove Theorem \ref{thm:1} it suffices to show that for some absolute constant $c>0$ we have that $\E_{\pi_{\fu,p}}[\exp(cp\gamma_{A}T_{A})]< \infty $ (recall that we consider $\mu=1$). 

 Let $t=\frac{C'}{\gamma_{A}  p }$.
Loosely speaking, we show that for some choice of $(C,C')$,  for all $i \in \N$,     the full process started from its stationary distribution has a regeneration time during the time interval  $[2it,(2i+1)t]$, with probability bounded from below. Moreover,  at each such  regeneration time, with probability bounded from below, the full process is in $A$. Furthermore, this holds even conditioned on the information gathered at the previous time intervals  $[2jt,(2j+1)t]$, where for each such time interval we expose if there was a regeneration time during it, and if there was one, we take the first one and expose whether the walk is in $A$ at that time. 

(In the actual proof we do not restrict to one regeneration time per time interval, although we could have done so; for technical reasons we make several other small modifications in the proof to what is written above.)

This is achieved by treating the walk co-ordinate and the environment separately. For the walk co-ordinate we use the comparison method to compare the auxiliary chain with the SRW. By comparing their spectral-profiles, and applying a certain inductive argument (which we sketch below) we are able to show that at each time interval  $[2it,(2i+1)t]$ the chain has probability bounded from below to visit $B$, even when conditioning on the information exposed during the previous time intervals. This gives us a sequence of visits to $B$ at regeneration times by the walk co-ordinate, which are separated by at least $t$ time units.   

For the environment, note that by definition, at every visit of the walk co-ordinate to $B$, if the environment is distributed according to $\pi_p$ and independent of the walk, the full process would have probability at least $1/4$ of being in $A$.  By property (1) above, the distribution of the environment is indeed stationary at all regeneration times and in particular at the regeneration times at which the walk visits $B$. However, (generally) if $(Y_i)$ is a stationary Markov chain, and for each $j$, $D_j$ is an event which is determined by $Y_j$, then given the values of the indicators of $D_1,\ldots,D_i$ we no longer have that $Y_{i+1}$ has the same law as its unconditional law (i.e., its law might differ from the invariant distribution). In particular, if at some of the previous visits to $B$ at regeneration times we expose if the full process is in $A$, this affects the law of the environment at the current regeneration time, making it no longer stationary. It turns out that the fact that we consider a sequence of such visits which are separated by an amount of time units which is larger than the relaxation-time of the environment allows us to control such dependencies. We now review the general principle from which this follows.

For a Markov chain $(W_t)_{t \in \R_+}$ on a finite state space,  reversible w.r.t.\ a distribution $\hat \pi$, starting from its invariant distribution, if $s $ is a large multiple of its relaxation-time, then samples of the chain at times $0,s,2s,\ldots $ are uncorrelated (e.g.\ \cite[\S12.7]{levin}). In fact, if $D_0,D_1,D_2,\ldots $ is a sequence of sets such that $\inf\{ \min\{ \hat \pi(D_i), \hat \pi(D_i^{c})\}:i \ge 0 \}=:\delta>0 $, then as we explain below, using $L_2$ considerations and an inductive argument, one can show that if $s \ge C_{1} \rel \log (1/\delta) $ for some absolute constant $C_1$ then a.s. \[\E[ \1_{W_{is} \in D_i} \mid ( \1_{W_{js} \in D_j^c}:j<i)   ] \ge \delta/2. \]
Note that this statement is obvious is we replace $C \rel \log (1/\delta)$ by the $\delta/2$ mixing time.   As two more complicated variants of this inductive argument will be used in the proof of Theorem \ref{thm:1}, we now give a sketch of the proof of this fact, in order to emphasise the main idea behind the proof.

 Let  $\mu $ be a  distribution with $\|\mu- \hat \pi\|_{2,\hat \pi} \le \delta $.   Let $\xi_i $ be the indicator of $W_{is} \in D_i $. Then for all $i$ and all $a_0,a_{1}\ldots\in \{0,1\} $, such that $\P_{ \mu}[E_{\ell}  ]>0$ for  all $\ell $, where $E_{\ell}=\{\xi_j=a_j \text{ for all }j \le \ell \}$,   we have that  \[\P_{\mu}[W_{is} \in D_i \mid E_{i-1}  ]   \in \left[\hat \pi(D_i)-\frac {\delta}{2} ,\hat \pi(D_i)+\frac {\delta}{2} \right]. \]  

This is proven by proving by induction that for all $i \in \N $ \[\Upsilon_{i}:=\|\P_{\mu}(W_{is} \in \cdot \mid E_{i} ) \|_{2,\hat \pi}^2 \le 4 \frac{ \delta^{2} +1 }{\min\{\hat \pi(D_i),\hat \pi(D_i^{c}) \}^{2}},  \] and that \[|\P_{\mu}(W_{(i+1)s} \in D_{i+1} \mid E_{i} )-\hat \pi (D_{i+1})| \le \|\P_{\mu}(W_{(i+1)s} \in \cdot \mid E_{i} )-\hat \pi \|_{2,\hat \pi} =: \varrho_i\le  \delta.\] The induction step follows by combining the following simple observations (in proving the induction step, first use (3) then (2) and then (1)):
\begin{itemize}

\item[(1)] By the definition of the total-variation distance ($\|\nu-\nu'\|_{\mathrm{TV}}=\max_D$ \\ $ |\nu(D)-\nu'(D)|$), and the fact that $2\|\nu-\nu'\|_{\mathrm{TV}} \le \|\nu-\nu'\|_{2,\pi} $, we have \[ |\P_{\mu}(W_{(i+1)s} \in D_{i+1} \mid E_{i} )-\hat \pi (D_{i+1})| \le \half \varrho_i.  \]

\item[(2)] By the Poincar\'e inequality
\[\rho_i \le \Upsilon_{i}e^{-s/\rel } =\delta^{C} \Upsilon_{i}.  \]

\item[(3)] For any distribution $\nu$ and any set $D$ we have  $\|\nu_{D}- \hat \pi\|_{2,\hat \pi}^2 +1\le \frac{\|\nu-\hat \pi\|_{2,\hat \pi}^2+1 }{\nu(D)^{2}}  $, where $\nu_{D}(b ):=\frac{\nu(b)\mathbf{1}_{\{b \in D\}}}{\nu(D)} $ is $\nu$ conditioned on $D$ (see \eqref{e:hatnuL2}). Hence by  (1) $\Upsilon_{i}^2 \le \frac{\varrho_{i-1}^{2}+1 }{\min\{\hat \pi(D_i)- \half \varrho_{i-1},\hat \pi(D_i^{c}) - \half \varrho_{i-1}\}^{2}}$.
\end{itemize}
To argue that  at each time interval  $[2it,(2i+1)t]$ the chain has probability bounded from below to visit $B$, even when conditioning on the information exposed during the previous time intervals, we use a   more complicated version of this inductive argument sketched below, in which $\frac{1}{p\gamma_A}$ and the spectral-profile bound on the decay of the $L_2$ distance from equilibrium play the roles of the relaxation-time and the Poincar\'e inequality above, respectively.

 Consider a sequence of visits to the set $B$ by the walk co-ordinate of the full process at regeneration times, say to vertices $b_1,b_2,\ldots$, at times that are separated apart by $t \ge C_2$ time units, for some large absolute constant $C_2$. At the $i$-th visit, if the environment is in $D_i:=\{w \in \{0,1\}^V :(b_i,w) \in A \} $, then the full process visits $A$ at that time. Let $\xi_i $ be the indicator of the event that the environment is in $D_i$ at this time.
 
We start with a stationary environment, since we start from the stationary distribution on the full process, as we are seeking to show that  $\E_{\pi_{\fu,p}}[\exp(cp\gamma_{A}T_{A})]< \infty $. Since $t$ is large in terms of the relaxation-time of the environment, which is $O(1)$, in light of the above discussion it is intuitive that 
 $\xi_1,\xi_2,\ldots$ is approximately a sequence of independent Bernoulli trials. Making this intuition precise turns out to be quite subtle, and is among the main technical challenges in the proof of Theorem \ref{thm:1}. Indeed, a major obstacle is the fact that the sets $D_1,D_2,\ldots$ are random and depend on the walk co-ordinate, and thus also on the environment.

\section{Auxiliary chain}
\label{s:auxchain}

Recall that $G=(V,E)$ with $V$ the set of vertices and $E$ the set of edges. We fix an ordering of the edges $E=\{e_1,e_2,\ldots, e_n\}$.

For every edge $e_i$ we create an infinite number of copies denoted $e_{i,1},e_{i,2},\ldots$. We emphasize that the copies will not be considered as edges of $E$. We start $(X_0,\eta_0)\sim \pi\times \pi_p$. For every time $t$ we now define a set of ``infected edges'' $R_t$ as follows: $R_0=\emptyset$ and if $R_{t-}=A$ and the exponential clock of $X$ rings at time $t$, we then add to $R_t$ the edge that $X$ examines to cross at time $t$. If this edge already exists in $A$, then we add its lowest numbered copy that is not in $A$. We next assign an ordering to the edges in $R_t$ using the ordering of the edges of~$E$ in the following way: assign label $1$ to the edge (or copy) contained in $R_t$ with the lowest label in the ordering of $E$. If both the edge and some of its copies are contained in $R_{t}$, then we assign label $1$ to the edge and  then we give the next label to the lowest numbered copy and so on.
If there are only copies of that edge, then assign label $1$ to the lowest numbered copy of this edge and label $2$ to the second lowest numbered copy and so on, until we exhaust all the copies of the edge. Then we continue in the same way, by finding the second edge (or copy of it) from $E$ with the second lowest label and assign to it the next label and so on. 
(We note that any ordering of $R_t$ would work, but we choose to specify one to make the construction clearer.) 

When $R_{t-}=A$, then assign exponential clocks of rate $\mu$ to the edges of $E$  that are not in $A$ and we also generate an exponential clock of rate $|A|\mu$. When a clock of an edge in $E\setminus A$ rings, refresh the state of the edge to open with probability $p$ and closed with probability $1-p$. If the exponential clock of rate $|A|\mu$ rings at time $t$, choose an index from $\{1,\ldots, |A|\}$ uniformly at random and remove the edge with this label from the set $R_{t-}$ in order to obtain the set $R_t$. If the edge that was chosen to be removed was an edge of $E$, then we also refresh its state. 

With this construction $(X,\eta)$ has the correct transition rates. This construction also enables us to couple different systems by keeping their infected sets of the same size. More specifically, suppose that $(X^1,\eta^1), \ldots, (X^m,\eta^m)$ are such that at time $0$ they are i.i.d.\ and distributed according to~$\pi\times \pi_p$ and take $R^1_0=\emptyset, \ldots, R^m_0=\emptyset$. Then we couple them all together by using the same exponential clocks of rate $1$ for the walk components and in order to remove edges from the sets $R_{t-}^i$ we use the same exponential clocks of rate $|R_{t-}^i|\mu$ and choose the same uniform number from $\{1,\ldots, R_{t-}^i\}$. For the edges not in $R_{t-}^i$, we assign independent exponential clocks to the different systems. Finally, when a clock rings (either of the walk or of an edge of the set $R$), the vertex to which the walk components jump or the new states of the edges chosen are independent for the different systems.
With this coupling, indeed the sizes of the sets $R_t^i$ remain equal throughout for different $i$'s.

We now want to consider the jump chain of the full process. This is defined to be the full process $(X,\eta)$ observed at the sequence of jump times, which are the points of a Poisson process of parameter~$1+\mu n$. 

The following lemma (whose proof follows immediately by induction on $k$) shows that with the coupling described above the jump chains of the full processes are independent at every discrete time step.

\begin{lemma}\label{lem:adversary}
        Suppose that $(X_k^1,\eta_k^1)_{k\in \N}, \ldots, (X_k^m,\eta_k^m)_{k\in \N}$ start independently according to $\pi\times \pi_p$ and suppose that an adversary prescribes in advance at which discrete time steps the $X$ or the $\eta$ coordinate will make a jump (i.e.\ the jumps of all of the systems occur simultaneously, and either in all of them, the walk co-ordinate attempts a move, or in all of them the environment is updated). When the adversary chooses the $\eta$ coordinate to update, he also prescribes which edge is going to be updated in each system (not necessarily the same edge!).  At times at which the $X$ coordinate is chosen, where the $X$ coordinate jumps to is independent for the different chains. At times when the environment is chosen, the states of the chosen edges are also i.i.d.\ and become open with probability $p$ and closed with probability $1-p$. Then for all $k$ we have that $(X_k^1,\eta_k^1), \ldots, (X_k^m,\eta_k^m)$ are independent. 
\end{lemma}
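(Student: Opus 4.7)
The plan is a straightforward induction on $k$, with the base case $k=0$ given by the hypothesis that the initial states are i.i.d. The real content is just to unpack, under the adversary's deterministic prescription, how the one-step transition decomposes.

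First I would observe that conditional on the adversary's schedule (which is fixed in advance and deterministic), the update rule at step $k+1$ can be written uniformly across systems as
\[
(X_{k+1}^i,\eta_{k+1}^i) \;=\; F_{k+1}\bigl((X_k^i,\eta_k^i),\,U_{k+1}^i\bigr),
\]
for a deterministic map $F_{k+1}$ that depends only on the adversary's choice at step $k+1$, and random variables $U_{k+1}^1,\ldots,U_{k+1}^m$ that are i.i.d.\ across $i$ and independent of the history $\{(X_j^\ell,\eta_j^\ell):j\le k,\,\ell\le m\}$. Concretely: if step $k+1$ is a walk step, then $U_{k+1}^i$ is a uniformly chosen neighbor of $X_k^i$ (equivalently, a uniformly chosen edge incident to $X_k^i$), and $F_{k+1}$ moves $X_k^i$ to that neighbor iff the corresponding edge is open in $\eta_k^i$, leaving $\eta_k^i$ untouched; if step $k+1$ is an environment step, the adversary specifies an edge $e^i$ in each system, and $U_{k+1}^i\sim\mathrm{Ber}(p)$ is the new state of $e^i$, with $F_{k+1}$ replacing $\eta_k^i(e^i)$ by $U_{k+1}^i$ and leaving $X_k^i$ fixed. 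The coupling description preceding the lemma guarantees that $U_{k+1}^1,\ldots,U_{k+1}^m$ are independent across $i$ in both cases (walk-jump targets are sampled independently across systems, and updated edge states are sampled independently across systems).

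Second, I would conclude the induction: by the inductive hypothesis $(X_k^i,\eta_k^i)_{i\le m}$ are mutually independent, and the tuple $(U_{k+1}^1,\ldots,U_{k+1}^m)$ is mutually independent and jointly independent of $(X_k^i,\eta_k^i)_{i\le m}$. Since $F_{k+1}$ is deterministic, the images $(X_{k+1}^i,\eta_{k+1}^i)=F_{k+1}((X_k^i,\eta_k^i),U_{k+1}^i)$ are mutually independent functions of disjoint collections of independent random variables, which proves the inductive step.

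There is no genuine obstacle; the claim is almost a tautology once the coupling from the preceding paragraphs is internalized. The only thing worth being careful about is making sure that the environment-update randomness at step $k+1$ is genuinely independent of the history even though the adversary is allowed to choose \emph{which} edge is updated in each system based on that history — this is fine because the adversary's choice is a (deterministic or measurable) function of the past, while the fresh $\mathrm{Ber}(p)$ coin used to set the new state of the selected edge is, by construction, drawn independently of everything that came before and independently across the $m$ systems.
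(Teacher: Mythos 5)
Your induction on $k$ is correct and is exactly the argument the paper has in mind: the paper simply asserts that the lemma ``follows immediately by induction on $k$'' without writing out the one-step factorization $(X_{k+1}^i,\eta_{k+1}^i)=F_{k+1}((X_k^i,\eta_k^i),U_{k+1}^i)$ that you make explicit. One small caution: your closing remark about an adversary who chooses the updated edges adaptively from the history goes beyond the stated hypothesis (``prescribes in advance''), and if the edge chosen for system $i$ were allowed to depend on the histories of the \emph{other} systems, the disjoint-inputs factorization would no longer be immediate --- but for the lemma as stated this is moot.
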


\begin{lemma}\label{lem:couplingmarkov}
Consider a continuous time Markov chain with generator $Q=e^{\lambda (P-I)}$ for $\lambda >0$. 
        Let $X^1,\ldots, X^m$ be continuous time Markov chains with generator $Q$ which start from i.i.d.\ states distributed according to the invariant distribution $\pi$. Suppose that their exponential clocks are coupled in some way and at every discrete time step the jump chains (that evolve according to the transition matrix $P$) are independent. Then for every $t$ we have 
        \[
        \prstart{X_t^1=x_1,\ldots, X_t^m=x_m}{} = \prod_{i=1}^{m} \pi(x_i).
        \]
\end{lemma}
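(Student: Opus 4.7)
The plan is to condition on the clock randomness and reduce to a discrete jump-chain picture in which the invariance of $\pi$ under $P$ can be exploited directly. Realise the continuous-time chain with generator $\lambda(P-I)$ via a Poisson clock of rate $\lambda$ that, upon each ring, applies one step of $P$. Writing $N^i_t$ for the number of $P$-steps of chain $i$ by time $t$ and $Y^i = (Y^i_k)_{k \ge 0}$ for the associated discrete jump chain (with transition matrix $P$ and initial state $Y^i_0 = X^i_0$), we obtain $X^i_t = Y^i_{N^i_t}$. The coupling of the clocks across the $m$ chains is encoded by the joint law of $(N^1,\ldots,N^m)$, which may be arbitrary, whereas by hypothesis the $P$-transitions themselves are performed independently across the chains.

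The key algebraic input is that $\pi$ is invariant for $P$. Since $\pi$ is stationary for the semigroup $\{e^{t\lambda(P-I)}\}_{t\ge 0}$, its generator $\lambda(P-I)$ annihilates $\pi$, whence $\pi P = \pi$. Now condition on the entire clock schedule, equivalently on the counts $(N^1_t,\ldots,N^m_t)$. By the independence hypothesis, conditional on the schedule the jump chains $Y^1,\ldots,Y^m$ are independent discrete-time Markov chains with transition matrix $P$ started from i.i.d.\ samples of $\pi$, so in particular each satisfies $Y^i_k \sim \pi$ for every $k \ge 0$. Therefore
$$\mathbb{P}(X^1_t = x_1,\ldots, X^m_t = x_m \mid N^1_t,\ldots, N^m_t) \;=\; \prod_{i=1}^{m} \pi(x_i),$$
and averaging over the clock schedule yields the stated identity.

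The argument is essentially mechanical and I do not anticipate any serious obstacle. The only point requiring care is interpreting the hypothesis correctly: the clock schedule, which encodes the possibly intricate coupling of the exponential clocks across the $m$ chains, must be treated as independent of the randomness driving each $P$-transition, so that conditioning on the schedule leaves the jump-chain evolutions independent across $i$. This is exactly the content delivered by Lemma~\ref{lem:adversary}, which is why the preceding lemma is phrased in the adversarial form.
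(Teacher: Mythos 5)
Your proposal is correct and follows essentially the same route as the paper: both decompose according to the jump counts $(N^1_t,\ldots,N^m_t)$, use the independence of the clock schedule from the $P$-transitions to factor out the clock randomness, and then invoke the joint stationarity and independence of the jump chains to get $\prod_i \pi(x_i)$. The only cosmetic difference is that you phrase this as conditioning on the schedule while the paper writes out the sum over $k_1,\ldots,k_m$ explicitly.
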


\begin{proof}[\bf Proof]
        
        Let $N^1, \ldots, N^m$ be the Poisson processes of rate $\lambda$ associated to each Markov chain $X^1,\ldots, X^m$ that are coupled as in the statement of the lemma. Let $Y^1,\ldots, Y^m$ be the independent jump chains that start at time $0$ independently according to $\pi$. Then we have 
        \begin{align*}
                &\prstart{X_t^1=x_1,\ldots, X_t^m=x_m}{} \\&= \sum_{k_1,\ldots, k_m} \prstart{Y_{k_1}^1=x_1,\ldots, Y_{k_m}^m=x_m, N^1_t=k_1,\ldots, N_t^m=k_m}{}\\
                &= \sum_{k_1,\ldots, k_m} \prstart{Y_{k_1}^1=x_1,\ldots, Y_{k_m}^m=x_m}{}\pr{N^1_t=k_1,\ldots, N_t^m=k_m} \\&= \sum_{k_1,\ldots, k_m} \pi(x_1)\cdots \pi(x_m)\pr{N^1_t=k_1,\ldots, N_t^m=k_m} =  \pi(x_1)\cdots \pi(x_m)
        \end{align*}
        and this completes the proof.
\end{proof}

%
 
\begin{definition}\label{def:aux}\rm{ 
We define the auxiliary chain $Y$ starting from $x_0$ as follows:
start $\eta_0\sim \pi_p$ and $X_0=x_0$. Set $R_0=\emptyset$ and for every $t$ consider the set of infected edges $R_t$ as defined above. We define the regeneration times by letting $\tau_0=0$ and for every $i\geq 0$ we set 
\[
\tau_{i+1}=\inf\{t\geq \tau_i+S_i: R_t=\emptyset\},
\]
where $\tau_i+S_i$ is the first time after time $\tau_i$ that $R_t$ becomes nonempty. The auxiliary chain is defined to be the discrete time chain given by $Y_i= X_{\tau_i}$ for all $i$.
}
\end{definition}

\begin{remark}
        \rm{
Note that the $(S_i)$'s are i.i.d.\ having the exponential distribution with parameter $1$ and are independent of $(X_{\tau_i})_i$. }
\end{remark}

\begin{lemma}\label{lem:prelimforproof}
        We have that $(\tau_i-\tau_{i-1})_{i\geq 1}$ are i.i.d., have mean $e^{1/\mu}$ and have exponential tails. Moreover, the process~$(X_s, \eta_{s}, R_s)_s$ is positive recurrent. 
\end{lemma}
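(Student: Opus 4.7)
The plan is to exploit the fact that the queue size $|R_t|$ evolves as a simple M/M/$\infty$ queue which is autonomous from $(X_t, \eta_t)$, and to read off all the claims from standard properties of this queue. Inspecting the graphical construction, the only events that change $|R_t|$ are rings of the walker's rate-$1$ Poisson clock, which always increase $|R_t|$ by $1$, and rings of the single rate-$|R_t|\mu$ removal clock, which always decrease $|R_t|$ by $1$; both rates depend only on $|R_t|$, not on $(X_t,\eta_t)$ or on the environment-refresh clocks (which affect $\eta$ only). Hence $(|R_t|)_{t\ge 0}$ is a birth--death chain on $\mathbb{Z}_+$ with birth rate $\lambda=1$ and death rate $k\mu$ in state $k$, i.e.\ an M/M/$\infty$ queue with arrival rate $1$ and per-server service rate $\mu$, and its law does not depend on $(X,\eta)$.

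With this decoupling in hand, each $\tau_i$ is a stopping time in the natural filtration of this queue and satisfies $|R_{\tau_i}|=0$. Applying the strong Markov property of the queue iteratively at each $\tau_i$ shows that the increments $\tau_i-\tau_{i-1}$ are i.i.d., each distributed as the first return time $T$ to $0$ starting from $0$. The stationary distribution of the M/M/$\infty$ queue is $\mathrm{Poisson}(1/\mu)$, so its mass at $0$ equals $e^{-1/\mu}$; since the only rate out of $0$ is $q_0=\lambda=1$, the standard CTMC identity $\pi(0)=1/(q_0\,\mathbb{E}[T])$ yields $\mathbb{E}[T]=e^{1/\mu}$, as required.

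The technical heart of the proof is the exponential tail bound on $T$, though this too is routine given the explicit birth--death structure. Since the death rate $k\mu$ diverges while the birth rate stays at $1$, the chain has strong negative drift far from $0$. I would take the Lyapunov function $V(k)=e^{\alpha k}$ and compute $\mathcal{Q}V(k)=e^{\alpha k}[(e^{\alpha}-1)+k\mu(e^{-\alpha}-1)]$; for all sufficiently small $\alpha>0$ there exist $c>0$ and $k_0$ with $\mathcal{Q}V(k)\le -cV(k)$ for $k\ge k_0$. A standard Foster--Lyapunov argument then delivers $\mathbb{E}_0[e^{\beta T}]<\infty$ for some $\beta=\beta(\mu)>0$, which is the desired exponential tail. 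This is the only step that requires a small amount of care; the rest is bookkeeping.

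Finally, positive recurrence of the joint chain $(X_s,\eta_s,R_s)_s$ follows because, by the preceding steps, it is a strong Markov process on a countable state space that visits the finite set $\{(x,\eta,\emptyset):x\in V,\,\eta\in\{0,1\}^E\}$ at the times $(\tau_i)$ with finite mean gap $e^{1/\mu}$. Standard continuous-time Markov chain theory then yields positive recurrence on the (countable) set of states actually visited.
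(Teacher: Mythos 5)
Your proof is correct and follows essentially the same route as the paper: the paper likewise reads off the first claim from the autonomous birth--death structure of $|R_t|$ (rates $q(i,i+1)=1$, $q(i,i-1)=\mu i$), and derives positive recurrence of the triple from the regeneration structure at the times $\tau_i$. The only place the paper is more explicit is your final step: it introduces the embedded finite-state chain $Z_i=(X_{\tau_i},\eta_{\tau_i})$ and bounds the mean return time via $\E[T]\le\sum_{i}\E\left[(\tau_i-\tau_{i-1})\1(T_Z\geq i)\right]$ and Cauchy--Schwarz, which is precisely the ``standard theory'' you invoke when passing from finite mean gaps between visits to a finite set to a finite mean return time to a single state.
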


\begin{proof}[\bf Proof]

The first claim follows from the fact that $|R_t|$ evolves as a birth and death chain with transition rates $q(i,i+1)=1$ and $q(i,i-1)=\mu i$. 

We now prove that the chain $(X,\eta,R)$ is positive recurrent. It suffices to prove that the state~$(x,\eta,\emptyset)$ with $x\in V$ and $\eta\in \{0,1\}^E$ is positive recurrent. Let $T$ be the first return time to $(x,\eta,\emptyset)$. Consider the chain $Z_i=(X_{\tau_i},\eta_{\tau_i})$. This is clearly irreducible and positive recurrent, since it takes values in a finite state space. Let $T_Z$ be the time it takes for $Z$ to return to $(x,\eta)$. Then $T_Z$ has exponential tails and we have 
\[
\E[T]\leq \sum_{i=1}^{\infty}\E\left[(\tau_i-\tau_{i-1})\1(T_Z\geq i)\right].
\]
Using Cauchy-Schwarz and the exponential tails of both $T_Z$ and $(\tau_i-\tau_{i-1})$ proves the result.      
\end{proof}

\begin{lemma}\label{lem:inv}
        The invariant distribution of $Y$ is $\pi$.
\end{lemma}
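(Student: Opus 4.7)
The plan splits into two components.

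Component 1 (Markov property): That $Y$ is a Markov chain follows from the strong Markov property of $(X_t,\eta_t,R_t)$ at $\tau_i$ combined with the paper's earlier observation that at each regeneration time, $\eta_{\tau_i}$ has law $\pi_p$ and is independent of $X_{\tau_i}$. The latter holds because every edge that entered $R$ during $[\tau_{i-1},\tau_i)$ was refreshed upon removal, while every edge that never entered $R$ continued to refresh under its own rate-$\mu$ clock, so the assumption $\eta_0 \sim \pi_p$ propagates forward.

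Component 2 (Invariance of $\pi$): The main ingredient is the elementary observation that for every configuration $\xi \in \{0,1\}^E$, the one-attempt walk kernel
\[
Q_\xi(x,y) = \frac{\mathbf{1}\{\{x,y\} \in E,\; \xi(\{x,y\})=1\}}{\deg(x)} \quad (y \neq x),
\]
(with $Q_\xi(x,x)$ making rows sum to $1$) preserves $\pi$. This is a one-line check using $\pi(x)/\deg(x) = 1/(2|E|)$ and the cancellation of open-edge in-flux to $y$ with open-edge out-flux from $y$. Hence the claim $\pi Q_\xi = \pi$ holds pointwise in $\xi$, and consequently holds after averaging over any law of $\xi$.

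I would then start $X_0 \sim \pi$, $\eta_0 \sim \pi_p$ (independent) and $R_0 = \emptyset$ and argue $X_{\tau_1} \sim \pi$ via a PASTA-style observation on the full process: since the walk-attempt clock rings at rate $1$ in every state, the pre-attempt pair $(X_{\sigma_k^-},\eta_{\sigma_k^-})$ at the $k$th walk-attempt $\sigma_k$ has the stationary distribution $\pi \times \pi_p$, and combined with $\pi Q_\xi = \pi$ this yields $(X_{\sigma_k},\eta_{\sigma_k}) \sim \pi \times \pi_p$ for every $k$. Since the walk does not move between the last attempt $\sigma_{k_1}$ inside $[0,\tau_1]$ and $\tau_1$ itself, we have $X_{\tau_1} = X_{\sigma_{k_1}}$, and integrating over $k_1$ would give $X_{\tau_1} \sim \pi$.

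The main obstacle is that in the $R$-coupling the edges currently in $R$ are frozen until removal, so the random index $k_1$ depends on the same service times that also affect edge states (through refresh-on-removal). Careful bookkeeping is therefore required to pass from $X_{\sigma_k} \sim \pi$ for each fixed $k$ to $X_{\sigma_{k_1}} \sim \pi$ for the random terminal index $k_1$. I would handle this either by a Palm-type argument on the stationary augmented chain $(X_t,\eta_t,R_t)$ (whose unique invariant has marginal $\pi \times \pi_p$ on the $(X,\eta)$-coordinates, since the $(X,\eta)$-projection is the full process), or by comparison with an alternative coupling in which edge refreshes are entirely independent of the walk and of the service times, and then verifying that the marginal distribution of $X_{\tau_1}$ is insensitive to the coupling choice.
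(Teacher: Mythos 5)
Your Component 2 correctly isolates the easy facts (the one-attempt kernel $Q_\xi$ preserves $\pi$ for every fixed $\xi$, hence the full process is stationary from $\pi\times\pi_p$ and $X_{\sigma_k}\sim\pi$ for each \emph{fixed} attempt index $k$), and you correctly identify that the entire difficulty is the passage to the random index $k_1$, equivalently to the random time $\tau_1$. But neither of your proposed fixes closes this gap. The Palm-type argument rests on the observation that the invariant law of the augmented chain $(X_t,\eta_t,R_t)$ has $(X,\eta)$-marginal $\pi\times\pi_p$; this is true but is not what you need. The regeneration times are the downcrossings of $R$ into $\emptyset$, so the Palm/ergodic route requires the \emph{conditional} law of $X$ given $\{R=\emptyset\}$ under the invariant measure, and knowing the marginal of $X$ says nothing about that conditional law: $X$ and $|R|$ are a priori correlated, since both are driven by the same walk-attempt clock. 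The second fix is not workable as stated either, because $R$ is built from the walk's examination times, so the refresh-on-removal mechanism cannot be made "entirely independent of the walk" without changing what $\tau_1$ is; the claimed insensitivity of the law of $X_{\tau_1}$ to the coupling is essentially the assertion to be proved.

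The missing ingredient --- and the actual content of the paper's proof --- is the identity $\P(X_t=x,\,R_t=\emptyset)=\P(R_t=\emptyset)\,\pi(x)$ for each fixed $t$, i.e.\ the independence of $X_t$ from the event $\{R_t=\emptyset\}$. The paper obtains this by running $m$ i.i.d.\ stationary copies coupled so that they share the attempt times and the removal clocks (hence all have the same $|R_t|$, making $\{|R_t|=0\}$ a common event) while their jump chains remain independent at every step (Lemmas~\ref{lem:adversary} and~\ref{lem:couplingmarkov}); the strong law applied to the empirical measure $S_m(t)$ restricted to $\{|R_t|=0\}$ then forces the conditional law of $X_t$ given $\{R_t=\emptyset\}$ to be $\pi$. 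From there the ergodic theorem for $(X,\eta,R)$ and a renewal argument convert the occupation measure of $\{R=\emptyset\}$ into the visit frequencies of $Y$, giving $b=\pi$. Whichever route you take --- your direct computation of the law of $X_{\tau_1}$ from $R_0=\emptyset$, or the paper's occupation-measure route --- you must supply this independence (or an equivalent exchangeability argument); "careful bookkeeping" alone will not produce it.
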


\begin{proof}[\bf Proof]

In order to prove this result, we let $(X^1,\eta^1),\ldots, (X^m,\eta^m)$ be $m$ systems started independently from stationarity and coupled in the way described above using the infected sets, so that at every time $t$ they are independent and their infected sets all have the same size $|R_t|$. Then by Lemmas~\ref{lem:adversary} and~\ref{lem:couplingmarkov}
we get that $(X_t^1,\eta_t^1),\ldots, (X_t^m,\eta_t^m)$ are independent at every time $t$ and are distributed according to $\pi\times \pi_p$.

Fix $x\in V$ and define  
\[
S_m(t) = \frac{1}{m}\sum_{i=1}^{m} \1(X_t^i=x).
\]
Then $\E[S_m(t)] = \pi(x)$ and by the strong law of large numbers we get that as $m\to \infty$
\[
S_m(t) \to \pi(x) \text{ almost surely}.
\]
Using this and dominated convergence gives that as $m\to\infty$
\[
\econd{S_m(t)}{|R_t|=0} = \frac{\E\left[S_m(t)\1(|R_t|=0)\right]}{\pr{|R_t|=0}} \to \frac{\E\left[\pi(x) \1(|R_t|=0)\right]}{\pr{|R_t|=0}} = \pi(x).
\]
Using that
\[
\E\left[S_m(t) \1(|R_t|=0)\right] = \pr{|R_t|=0} \econd{S_m(t)}{|R_t|=0}
\]
and taking the limit as $m\to\infty$ we now obtain 
\begin{align}\label{eq:indep}
\pr{X_t^1=x, |R_t|=0} = \pr{|R_t|=0} \pi(x).
\end{align}
If we now take the limit as $t\to\infty$, then $(X_t^1,\eta_t^1, R_t)$ will converge to its invariant distribution $\mu$, which exists by Lemma~\ref{lem:prelimforproof}. So we obtain
\[
\mu(x,\{0,1\}^E,\emptyset) = \nu(\emptyset) \pi(x),
\]
where $\nu$ is the marginal for $R$ of the invariant distribution $\mu$.

In order to find the invariant distribution of $Y$ we use the ergodic theorem. First of all it is easy to see that $Y$ is an irreducible Markov chain. Let $b$ be its invariant distribution. Then by the ergodic theorem we will have that almost surely as $n\to\infty$
\[
\frac{1}{n}\sum_{i=1}^{n} \1(Y_i=x) \to b(x).
\]
Using the ergodic theorem for $(X,\eta, R)$ we get that almost surely as $t\to\infty$ 
\[
\frac{1}{t} \int_0^t \1(X_s=x, R_s=\emptyset)\,ds \to \mu(x,\{0,1\}^E,\emptyset) = \nu(\emptyset) \pi(x).
\]
Let $N_t=\sup\{i: \tau_i\leq t\}$ and let $(S_i)$ be i.i.d.\ exponential random variables of parameter $1$. Then 
\begin{align*}
 \frac{1}{t} \sum_{i=1}^{N_t-1} \1(X_{\tau_i}=x) S_i \le     \frac{1}{t} \int_0^t \1(X_s=x, R_s=\emptyset)\,ds \le \frac{1}{t} \sum_{i=1}^{N_t} \1(X_{\tau_i}=x) S_i.
\end{align*}    
Using the above convergence this now implies that almost surely as $t\to\infty$
\begin{align*}
        \frac{N_t}{t}\cdot \frac{1}{N_t}\sum_{i=1}^{N_t} \1(X_{\tau_i}=x) S_i \to \nu(\emptyset) \pi(x).
\end{align*}
Since $N_t$ is a renewal process with inter-arrival times having mean $1/\nu(\emptyset)$ we get almost surely
\begin{align*}
        \frac{1}{N_t}\sum_{i=1}^{N_t} \1(X_{\tau_i}=x) S_i \to \pi(x) \quad \text{ as } t\to \infty. 
\end{align*}
Now we notice that since $N_t$ can increase by $1$ at every jump, we get that the sequence $a_n=n$ is a subsequence of $(N_t)_t$, and hence almost surely
\begin{align*}
        \frac{1}{n} \sum_{i=1}^{n} \1(X_{\tau_i}=x) S_i \to \pi(x) \quad \text{ as } n\to \infty. 
\end{align*}
Now to conclude we write $L(n) = \sum_{i=1}^{n} \1(X_{\tau_i}=x)$ and by relabelling $S_i$ using the independence between $(X_{\tau_i})_i$ and $(S_i)_i$ we obtain almost surely
\begin{align*}
        \frac{1}{n} \sum_{i=1}^{L(n)} S_i \to \pi(x) \quad \text{ as } n\to \infty. 
\end{align*}
Using the strong law of large numbers for the i.i.d.\ sequence $(S_i)$ and that $L(n)\to \infty$ as $n\to\infty$ we finally get almost surely
\begin{align*}
        \frac{L(n)}{n} \to \pi(x) \quad \text{ as } n\to \infty.
\end{align*}
This shows that $b(x)=\pi(x)$ for all $x$ and concludes the proof.
\end{proof}

\begin{remark}
\label{r:abelian}
\rm{Consider a graph $G=(V,E)$ with the property that for all $x,y \in V$ there is an automorphism $\phi$ of $G$ such that $\phi(x)=y$ and $\phi(y)=x$. Any Cayley graph of an Abelian group has this property. It is not hard to verify that graphs with this property satisfy that the transition matrix of the auxiliary chain is symmetric and hence reversible w.r.t.\ the uniform distribution.}   
\end{remark}

We end this section with a lemma on the holding probabilities for the auxiliary chain that will be used later in the paper. 

\begin{lemma}
\label{lem:holdingprobforaux}
Let $Y$ be the auxiliary chain with parameters $\mu=1$ and $p\in (0,1)$. Then for all~$x\in V$ we have that  $P_{\mathrm{aux}} (x,x) \ge 1 - 2e^2p $ and  $(1-p)/2 \le P_{\mathrm{aux}} (x,x) \le 1 - \sfrac{p}{2} $,  where  $P_{\mathrm{aux}}$ is the transition matrix of $Y$. 
\end{lemma}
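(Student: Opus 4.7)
The plan is to derive all three inequalities from the basic structure of the auxiliary chain at $\mu=1$: the size $|R_t|$ is a birth--death chain on $\mathbb{Z}_+$ with birth rate $1$ (each walk ring) and death rate $k$ at state $k$, whose stationary distribution is Poisson$(1)$. By Lemma~\ref{lem:prelimforproof}, $\mathbb{E}[\tau_1] = e^{1/\mu} = e$, and because $\tau_1$ is a stopping time for the rate-$1$ walk-clock Poisson process (it depends only on the walk rings together with the independent $R$-removal clocks), the expected number $N$ of walk rings in $[0,\tau_1]$ equals $\mathbb{E}[\tau_1]=e$.

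The upper bound $P_{\mathrm{aux}}(x,x) \le 1 - p/2$ and the first lower bound $P_{\mathrm{aux}}(x,x) \ge (1-p)/2$ both come from a single explicit event. At time $S_0$ the walk rings and examines one of its $\deg(x)$ neighbouring edges; this edge's state is Bernoulli$(p)$ since $\eta_0 \sim \pi_p$. Immediately afterwards the walk's next clock and the unique $R$-removal clock are two independent $\mathrm{Exp}(1)$ alarms, so with probability $1/2$ the removal fires first; if so, $\tau_1$ is exactly that time. Thus the event ``first examined edge open and removal fires first'' has probability $p/2$ and forces $Y_1 \ne x$, while the event ``first examined edge closed and removal fires first'' has probability $(1-p)/2$ and forces $Y_1 = x$.

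For the remaining lower bound $P_{\mathrm{aux}}(x,x) \ge 1 - 2e^2 p$, the plan is to bound
\[
\mathbb{P}(Y_1 \ne x) \;\le\; \mathbb{P}\bigl(\text{walk jumps at some ring in } [0,\tau_1]\bigr) \;\le\; \sum_{i \ge 1} \mathbb{P}\bigl(T_i \le \tau_1,\; B_{i-1},\; \text{jump at } T_i\bigr),
\]
where $B_{i-1}$ is the event that the walk has not moved by the $i$th walk ring $T_i$. On $B_{i-1}$ the walk still sits at $x$ and at $T_i$ picks a uniform incident edge. Edges currently in $R$ were examined earlier from $x$, were found closed, and have not been refreshed while inside $R$, so they contribute zero to the jump probability; every other incident edge has state Bernoulli$(p)$. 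Hence the conditional jump probability at each ring is at most $p$, and summing gives $\mathbb{P}(Y_1 \ne x) \le p\,\mathbb{E}[N] = ep \le 2e^2 p$.

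The most delicate point is justifying the Bernoulli$(p)$ marginal of the fresh incident edges after conditioning on $B_{i-1}$ and on $\{T_i \le \tau_1\}$. My plan is to condition on the realisation of the walk-clock rings, of the removal-clock rings, and of the walk's uniform neighbour choices, together with the states of edges while they are in $R$: these data determine $R_t$ deterministically, are independent of the $\eta$-values on the complement of $R$, and for each edge outside $R$ the free refresh dynamics have $\pi_p$ as stationary distribution. Consequently the state of any edge not presently in $R_{T_i^-}$ remains Bernoulli$(p)$, independent of the conditioning, which is the ingredient needed to close the estimate.
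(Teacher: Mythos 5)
Your treatment of the two-sided bound $(1-p)/2 \le P_{\mathrm{aux}}(x,x) \le 1-\sfrac{p}{2}$ is exactly the paper's argument (first examined edge open/closed, then the single removal clock beats the next walk ring with probability $\tfrac12$). For the bound $P_{\mathrm{aux}}(x,x)\ge 1-2e^2p$ you take a genuinely different and in fact simpler route. Both proofs rest on the same key observation — conditionally on the past, each edge the walk examines before first leaving $x$ is open with probability at most $p$ — but the paper then dominates the evolution of $(|R_t|,\text{``has the walk jumped''})$ by a killed birth--death chain $Q$ on $\Z_+\cup\{\infty\}$ and computes $\mathbb{P}_0^Q(T_0^+<T_\infty)$ via a reflected chain $W$, an independent geometric killing time, and a lower bound $\nu(1)\ge e^{-2}$ on the invariant measure of $W$; that computation is the source of the constant $2e^2$. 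You instead union-bound over the walk rings in $[0,\tau_1]$ and invoke Wald's identity: $\tau_1$ is a stopping time for the enlarged filtration with respect to which the walk clock remains a rate-one Poisson process, and $\mathbb{E}[\tau_1]=e$ by Lemma~\ref{lem:prelimforproof}, so $\mathbb{P}(Y_1\ne x)\le p\,\mathbb{E}[N]=ep$, which is sharper than $2e^2p$ and implies the stated bound. The point you flag as delicate is handled correctly: on $B_{i-1}$ every edge of $E$ lying in $R_{T_i^-}$ is incident to $x$, was found closed, and is not refreshed while in $R$, whereas any other incident edge has been refreshed since its last examination (or was never examined and started from $\pi_p$), hence is a fresh $\mathrm{Ber}(p)$ independent of the conditioning data; this is the same independence structure the paper uses when asserting that the $(k+1)$st examined edge is open with probability at most $p$ given that the first $k$ were closed. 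Your route buys a cleaner constant and dispenses with the birth--death computation; the paper's route stays entirely at the level of discrete-chain hitting probabilities and avoids appealing to optional stopping.
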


\begin{proof}[\bf Proof]
The claim that  $(1-p)/2 \le P_{\mathrm{aux}} (x,x) \le 1 - p/2 $ follows by considering the case that the first edge to be examined by the walk after a regeneration time is closed (respectively, open), and that this edge is then refreshed before the walk attempts to make further jumps.

We now prove $\min_x P_{\mathrm{aux}} (x,x) \ge 1-2e^2p $. In fact, we show that $\alpha= \min_x \alpha_{x} \ge 1-2e^2p $, where~$\alpha_x$ is the probability that after a regeneration time at $x$ the walk does not leave $x$ before the next regeneration time.  We first observe that after a regeneration time at $x$, until the walk co-ordinate leaves $x$ for the first time, every edge it examines is open with probability at most $p$. To see this, by the strong Markov property, we may consider starting at time 0 from a stationary environment with the walk co-ordinate at $x$. Finally, given that the edges $e_1,\ldots,e_k$ were examined thus far, at times $t_1<\cdots<t_k$ and were all closed, the probability that $e_{k+1}$ is open at time $t_{k+1}>t_k$ is exactly $p$ if $e_{k+1}\notin \{ e_1,\ldots,e_k\} $ and is $p(1-e^{t_i-t_{k+1}})<p $ if $e_{k+1}=e_i$ and  $e_{k+1}\notin \{ e_{i+1},\ldots,e_k\} $.  

We now consider a birth and death chain with an additional point added to it. Its state space is $\Z_+ \cup \{ \infty \} $. Its transition matrix $Q$ is given by $Q(0,1)=1-p=1-Q(0,\infty) $, while  for $i \in \N$ we have $Q(i,i-1)=\frac{i}{i+1} $, $Q(i,i+1)=\frac{(1-p)}{i+1} $ and $Q(i,\infty)=\frac{p}{i+1}$. Finally, we set $Q(\infty,\infty)=1$. It is easy to see that
\[
\alpha \ge \mathbb{P}_0^Q(T_0^+<T_{\infty}),
\]
where $T_0^+$ stands for the first return to $0$ and $T_\infty$ for the first hitting time of $\infty$ for the chain with matrix $Q$. We have that 
\begin{align}\label{eq:boundontoq}
\mathbb{P}_0^Q(T_0^+<T_{\infty}) = (1-p) \mathbb{P}^Q_1(T_0<T_{\infty}).
\end{align}
Consider now a birth and death chain on $\Z_+$ with transition matrix $W(x,y) = \frac{Q(x,y)}{1-Q(x,\infty)}$. Let $N$ be a geometric random variable of success probability $p$ independent of the Markov chain $W$. Since the probability of jumping to $\infty$ for the Markov chain $Q$ is at most $p$ at every step, we can lower bound $\mathbb{P}^Q_1(T_0<T_\infty)$ by the probability that the Markov chain $W$ hits $0$ starting from $1$ before time $N$, i.e.
\[
\mathbb{P}_1^Q(T_0<T_\infty) \geq \mathbb{P}^W_1(T_0<N).
\]
We now turn to lower bound $\mathbb{P}^W_1(T_0<N)$. Since from now on we will only be working with the chain $W$ we drop the letter $W$ from the notation. Let $X$ be the birth and death chain with matrix~$W$. We let $Z=\sum_{i=0}^{N}\1(X_i=0)$. Then
\begin{align}\label{eq:ton}
\prstart{T_0<N}{1} = \prstart{Z>0}{1} = \frac{\estart{Z}{1}}{\escond{Z}{Z>0}{1}} = \frac{\estart{Z}{1}}{\estart{Z}{0}},
\end{align}
where for the last equality we used the memoryless property of $N$.
Using the independence of $N$ and $X$ and that $W(0,1)=1$, we get
\begin{align}\label{eq:equalities}
        \estart{Z}{1} = \sum_{i=1}^{\infty} \prstart{X_i=0}{1} (1-p)^{i-1} \quad \text{ and } \quad \estart{Z}{0} = 1+ \estart{Z}{1}.
\end{align}
We notice that $W(1,0)=\frac{1}{2-p}$, and so  $\prstart{X_i=0}{1} = \prstart{X_{i-1}=1}{1}/(2-p)$.
Therefore, this gives 
\begin{align*}
        \estart{Z}{1}= \frac{1}{2-p} \sum_{i=0}^{\infty} \prstart{X_i=1}{1}(1-p)^i = \frac{1}{2-p} \sum_{i=0}^{\infty} \prstart{X_{2i}=1}{1}(1-p)^{2i}. 
\end{align*}
Solving the detailed balance equations it is straightforward to see that $X$ has an invariant distribution satisfying $\nu(1) \geq e^{-2}$. Using that $\prstart{X_{2i}=1}{1}$ is decreasing as a function of $i$ and converges to $2\nu(1)$ as $i\to \infty$, we obtain
\begin{align*}
        \estart{Z}{1}\geq  \frac{2\nu(1)}{2-p}\sum_{i=0}^{\infty} (1-p)^{2i}=\frac{2\nu(1)}{p(2-p)^2}\geq \frac{1}{2pe^{2}}.
\end{align*}
Substituting this bound and~\eqref{eq:equalities} into~\eqref{eq:ton} we deduce
\[
\prstart{T_0<N}{1} \geq \frac{1}{2e^2p + 1-p} \geq 1- (2e^2-1)p.
\]
Substituting this into~\eqref{eq:boundontoq} we finally get $\alpha \geq 1-2e^2 p$ and this concludes the proof.
\end{proof}

\section{Comparison of hitting times}
\label{s:comparison}

In this section we prove Theorem~\ref{thm:2}. Before doing so, we review some results about hitting and commute times. 

Let $(Y_k)_{k\in \N} $ be an irreducible discrete time Markov chain on a finite state space $\Omega$ with transition matrix $P$ and stationary distribution~$\pi$. For $a\in \Omega$ we write 
\[
T_a=\inf\{t\geq 0: Y_t = a\} \quad  \text{ and } \quad T_a^+ = \inf\{t\geq 1: Y_t=a\}.
\]
For distinct states $a,b\in \Omega$ we write $T_{ab}$ for the commute time between 
$a$ and $b$ (for the chain starting at $a$), i.e.
\[
T_{ba} = \inf\{ t>T_b: Y_t=a\}.
\]
We write $P^*$ for the time-reversal of $P$, i.e.\ it is the transition matrix given by 
\[
\pi(x) P^*(x,y) = \pi(y) P(y,x).
\]
Let $\lambda(A)$ be the smallest eigenvalue of $Q_A$, the generator of the chain killed upon exiting $A$.
Writing $Q = \gamma (P-I)$ for some transition matrix $P$ and $\gamma>0$ and using the Perron Frobenius Theorem we have that \cite[Chapter~3]{aldous}
\begin{align}\label{eq:lambdaa}
\lambda(A) = \min\left\{\EE_Q(h,h): h\in \R^\Omega, \norm{h}_2=1, {\rm{supp}}(h)\subseteq A\right\},
\end{align}
where $\|h\|_2=(\E_{\pi}[|h|^{2}])^{1/2} $.

We next recall Dirichlet's principle for effective resistance. For a proof 
see~\cite[Theorem~3.36]{aldous}.
\begin{theorem}\label{lem:commutetime}
        Let $Y$ be a reversible Markov chain on a finite state space $\Omega$ with transition matrix~$P$ and stationary distribution $\pi$. Then for all $a,b\in \Omega$ we have 
        \[
        \frac{1}{\estart{T_{ba}}{a}} = \inf_{f \in \R^{\Omega} :\, f (a) = 1,f(b)=0,\, \, 0 \le f \le 1} \mathcal{E}_P(f,f).
        \]
\end{theorem}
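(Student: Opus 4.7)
The plan is to follow the standard three-step route: reduce the constrained minimisation to a harmonic extension problem, compute the minimum value in closed form, and identify it with the reciprocal of the commute time via a renewal argument on excursions from $a$.

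First I would observe that the pointwise constraint $0 \le f \le 1$ is redundant: for any competitor $f$ with $f(a)=1,f(b)=0$, the truncation $\tilde f:=(f\wedge 1)\vee 0$ still satisfies the boundary conditions, and each summand $(\tilde f(x)-\tilde f(y))^2$ is at most $(f(x)-f(y))^2$, so $\mathcal{E}_P(\tilde f,\tilde f)\le \mathcal{E}_P(f,f)$. The form $\mathcal{E}_P$ is a positive semidefinite quadratic form whose kernel consists only of constants; hence by irreducibility of $P$ it is strictly convex on the affine subspace $\{f:f(a)=1,f(b)=0\}$, and the infimum is attained at a unique minimiser $h$. Setting the first variation to zero and using reversibility in the form $\mathcal{E}_P(h,h)=\langle (I-P)h,h\rangle_{\pi}$, we obtain $(I-P)h(x)=0$ for every $x\notin\{a,b\}$. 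Optional stopping of the bounded martingale $h(Y_{t\wedge T_{\{a,b\}}})$ then identifies $h(x)=\prstart{T_a<T_b}{x}$, which verifies $0\le h\le 1$ a posteriori.

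Second I would compute the minimum in closed form. Expanding
\[
\mathcal{E}_P(h,h)=\sum_x \pi(x)\,h(x)\bigl(h(x)-(Ph)(x)\bigr),
\]
the harmonicity of $h$ off $\{a,b\}$ together with $h(b)=0$ collapses the sum to the single term $x=a$, giving $\mathcal{E}_P(h,h)=\pi(a)(1-(Ph)(a))$. By the strong Markov property at time one, $(Ph)(a)=\estart{h(Y_1)}{a}=\prstart{T_a^+<T_b}{a}$, so
\[
\mathcal{E}_P(h,h)=\pi(a)\,\prstart{T_b<T_a^+}{a}.
\]

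Third I would identify this quantity with $1/\estart{T_{ba}}{a}$ by a renewal argument on excursions from $a$. Write $p:=\prstart{T_b<T_a^+}{a}$; by the strong Markov property, successive $a$-excursions are i.i.d., each being \emph{long} (i.e.\ reaching $b$ before returning to $a$) with probability $p$. The commute time $T_{ba}$ decomposes as the sum of a $\mathrm{Geom}(p)$ number of excursion durations, where the last (long) excursion has, by definition, length equal to $T_a^+$ conditioned on $\{T_b<T_a^+\}$. Combining this with the companion decomposition $\estart{T_a^+}{a}=(1-p)\estart{T_a^+\mid T_a^+<T_b}{a}+p\,\estart{T_a^+\mid T_b<T_a^+}{a}$, one line of algebra gives $\estart{T_{ba}}{a}=\estart{T_a^+}{a}/p$, and Kac's formula $\pi(a)\estart{T_a^+}{a}=1$ completes the identification. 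The only real obstacle is making the excursion decomposition rigorous (handling conditional independence of excursion types and lengths); the rest is bookkeeping built on reversibility and the Markov property.
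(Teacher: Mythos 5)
Your proof is correct, and it follows essentially the same route that the paper relies on: the paper defers the proof to Aldous--Fill (Theorem 3.36), and in its proof of Lemma 4.3 it uses precisely your ingredients, namely that the minimiser is $v(x)=\prstart{T_a<T_b}{x}$ with $\mathcal{E}_P(v,v)=\pi(a)\prstart{T_b<T_a^+}{a}$ and that $1/\estart{T_{ba}}{a}=\pi(a)\prstart{T_b<T_a^+}{a}$ via a Wald/excursion argument. No gaps.
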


The following result is due to Doyle and Steiner~\cite{doyle2} and was also rediscovered by Gaudilli\`ere and Landim~\cite{GL}. We include its short proof for the reader's convenience.

\begin{lemma}\label{lem:symmetriz}
        Let $Y$ be a Markov chain on a finite state space $\Omega$ with transition matrix $P$ and stationary distribution $\pi$. Let $P^*$ be its time reversal and $S=(P+P^*)/2$. Then for all $a,b\in \Omega$ we have 
        \[
        \mathbb{E}^S_a[T_{ba}] \leq \mathbb{E}^P_a[T_{ba}] 
        \]
\end{lemma}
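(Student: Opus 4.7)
The plan is to combine Dirichlet's principle (Theorem \ref{lem:commutetime}) for the reversible chain $S$ with the observation that the Dirichlet form depends only on the symmetric part of the conductances. The key input will be a direct evaluation of the Dirichlet energy of the non-reversible voltage function, which I will pair with an elementary regenerative identity for $\mathbb{E}_a^P[T_{ba}]$ that holds without reversibility.

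First I would recall that for any transition matrix $P$ with stationary distribution $\pi$,
\[
\mathcal{E}_P(f,f)=\tfrac{1}{2}\sum_{x,y}\pi(x)P(x,y)(f(x)-f(y))^2=\sum_x\pi(x)f(x)(f(x)-(Pf)(x)),
\]
and symmetrising the sum in $(x,y)$ shows $\mathcal{E}_P(f,f)=\mathcal{E}_{P^*}(f,f)=\mathcal{E}_S(f,f)$. Now set $h(x):=\mathbb{P}^P_x(T_a<T_b)$, so that $h(a)=1$, $h(b)=0$, and $0\le h\le 1$. Since $h$ is $P$-harmonic on $\{a,b\}^c$, all but the $x=a$ term in the identity above vanish, and
\[
\mathcal{E}_P(h,h)=\pi(a)\Bigl(1-\sum_y P(a,y)h(y)\Bigr)=\pi(a)\sum_y P(a,y)\,\mathbb{P}^P_y(T_b<T_a)=\pi(a)\,\mathbb{P}^P_a(T_b<T_a^+)
\]
by the Markov property at time $1$.

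Next I would verify the identity $\pi(a)\,\mathbb{P}^P_a(T_b<T_a^+)=1/\mathbb{E}^P_a[T_{ba}]$ for general (not necessarily reversible) ergodic chains. This follows from a short regenerative argument: excursions from $a$ are i.i.d., each visits $b$ with probability $p:=\mathbb{P}^P_a(T_b<T_a^+)$, and by a geometric decomposition of $T_{ba}$ into a Geom$(p)$ number of excursions,
\[
\mathbb{E}^P_a[T_{ba}]=\frac{1}{p}\,\mathbb{E}^P_a[T_a^+]=\frac{1}{p\,\pi(a)}.
\]
Combining with the previous display gives the clean equality $\mathcal{E}_P(h,h)=1/\mathbb{E}^P_a[T_{ba}]$.

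To finish, I would apply Dirichlet's principle (Theorem \ref{lem:commutetime}) to the reversible chain $S$, which has the same stationary distribution $\pi$ as $P$, using $h$ as a test function (legitimate since $0\le h\le 1$, $h(a)=1$, $h(b)=0$):
\[
\frac{1}{\mathbb{E}^S_a[T_{ba}]}=\inf_f \mathcal{E}_S(f,f)\;\le\;\mathcal{E}_S(h,h)=\mathcal{E}_P(h,h)=\frac{1}{\mathbb{E}^P_a[T_{ba}]},
\]
and taking reciprocals yields $\mathbb{E}^S_a[T_{ba}]\le \mathbb{E}^P_a[T_{ba}]$. The only step with any subtlety is the regenerative identity for $p$, and the only conceptual move is realising that $\mathcal{E}_P=\mathcal{E}_S$ allows the non-reversible voltage of $P$ to be fed straight into the reversible variational principle of $S$; everything else is routine.
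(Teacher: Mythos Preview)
Your proof is correct and follows essentially the same approach as the paper's: both define the $P$-harmonic function $h(x)=\mathbb{P}^P_x(T_a<T_b)$, use its harmonicity off $\{a,b\}$ together with the Wald/regenerative identity $\pi(a)\mathbb{P}^P_a(T_b<T_a^+)=1/\mathbb{E}^P_a[T_{ba}]$ to compute $\mathcal{E}_P(h,h)$, observe $\mathcal{E}_P=\mathcal{E}_S$, and then feed $h$ into Dirichlet's principle for the reversible chain $S$. Your write-up is simply a more explicit version of the paper's terse argument.
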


\begin{proof}[\bf Proof]
        Let $v(x)=\prstart{T_a<T_b}{x} $. Then using that $v$ is harmonic off $\{a,b\}$ (and that $\frac{1}{\E_{a}^{P}[T_{ba}] }=\pi(a)\mathbb{P}_a^{P}[T_{b}<T_a^+] $, which can be derived using Wald's equation, by considering the number of returns of $a$ before the first excursion that visits $b$) we get
\[\frac{1}{\E_{a}^{P}[T_{ba}] }=\mathcal{E}_{P}(v,v)=\mathcal{E}_{S}(v,v)\ge \frac{1}{\E_{a}^{S}[T_{ba}] }, \]
where the inequality follows from Theorem~\ref{lem:commutetime} applied to the reversible matrix $S$ using that $v(b)=0$ and $v(a)=1$.
\end{proof}

The following is similar to \cite[p.\ 12]{hermonspectral}. We give the proof for the sake of completeness.
\begin{lemma}
\label{lem:hitmaxvsstat}
There exists a positive constant $c$ so that for every reversible Markov chain on a finite state space $\Omega$ with stationary distribution~$\pi$, for all $A \subset \Omega$ we have that
\[\max_{x \in \Omega } \E_x[T_A] \le \E_{\pi}[T_A]+c \mix.  \]
\end{lemma}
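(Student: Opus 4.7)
I would reduce the bound to the existence of a stopping time $\sigma$ (depending on the starting vertex $x$) such that $X_\sigma \sim \pi$ and $\E_x[\sigma] \lesssim \mix$. Given such $\sigma$, the trivial inequality $T_A \le \sigma + T_A \circ \theta_\sigma$ (immediate when $T_A \le \sigma$, and by definition of the time-shift $\theta_\sigma$ when $T_A > \sigma$) combined with the strong Markov property at $\sigma$ yields
\[
\E_x[T_A] \le \E_x[\sigma] + \E_x\!\left[\E_{X_\sigma}[T_A]\right] = \E_x[\sigma] + \E_\pi[T_A],
\]
the last equality using $X_\sigma \sim \pi$. Taking the maximum over $x$ then gives the desired conclusion.

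\textbf{Construction of $\sigma$ and control of $\E_x[\sigma]$.} I would invoke the Aldous--Diaconis filling scheme, which produces a stopping time $\sigma$ with $X_\sigma \sim \pi$ and
\[
\P_x(\sigma > t) \le d_x(t) := \|P^t(x, \cdot) - \pi\|_{\mathrm{TV}}.
\]
Integrating the tail gives $\E_x[\sigma] \le \int_0^\infty d_x(t)\, dt$. To bound this integral by a multiple of $\mix$, introduce $\bar d(t) := \max_{y, z} \|P^t(y, \cdot) - P^t(z, \cdot)\|_{\mathrm{TV}}$. The triangle inequality gives $d_x(t) \le \bar d(t) \le 2\max_y d_y(t)$, so $\bar d(\mix) \le 1/2$. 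Using the standard submultiplicativity $\bar d(s+t) \le \bar d(s)\bar d(t)$ iteratively yields $\bar d(k\mix) \le 2^{-k}$, and since $\bar d$ is nonincreasing,
\[
\int_0^\infty d_x(t)\, dt \le \int_0^\infty \bar d(t)\, dt \le \mix \sum_{k \ge 0} 2^{-k} = 2\mix.
\]
Combining with the first display, one obtains $\max_x \E_x[T_A] \le 2\mix + \E_\pi[T_A]$, giving the claim with $c = 2$.

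\textbf{Main technical point.} The only subtle step is the construction of the stopping time $\sigma$ with $X_\sigma \sim \pi$ and the tail bound $\P_x(\sigma > t) \le d_x(t)$ in the continuous-time setting. I would either implement a direct analogue of the discrete filling scheme (randomly declaring ``stop'' at jump times of an auxiliary Poisson clock so as to equalize the residual mass with $\pi$), or discretize the chain at sufficiently fine time steps and pass to a limit. Reversibility is not actually used in the above argument and is inherited only from the ambient setting in which the lemma is applied.
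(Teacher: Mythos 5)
Your reduction is exactly the one the paper uses: produce a (randomized) stopping time $\sigma$ with $X_\sigma\sim\pi$ and $\E_x[\sigma]\lesssim \mix$, then bound $\E_x[T_A]\le \E_x[\sigma]+\E_\pi[T_A]$ via the strong Markov property. That half is fine. The gap is in your construction of $\sigma$: the filling scheme does \emph{not} satisfy $\prstart{\sigma>t}{x}\le d_x(t)$, and in fact \emph{no} stationary time can satisfy this bound in general, because $\min_\sigma\E_x[\sigma]$ can exceed $\int_0^\infty d_x(t)\,dt$ by an arbitrarily large factor. Concretely, take the two-state chain with $P(1,2)=\eps$, $P(2,1)=1/2$ (which is reversible, so this is not an artifact of dropping reversibility). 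Starting from state $1$ one has $d_1(t)=\pi(2)(1/2-\eps)^t$ with $\pi(2)\approx 2\eps$, so $\sum_{t\ge 0}d_1(t)\approx 4\eps$; but the occupation-measure identity $\nu(I-P)=\delta_1-\pi$ for $\nu(z)=\E_1[\#\{t<\sigma:X_t=z\}]$ forces $\E_1[\sigma]=\nu(1)+\nu(2)\ge \pi(2)/\eps\approx 2$ for every stopping time with $X_\sigma\sim\pi$ (the walk must be caught at state $2$ with probability $\pi(2)$, and reaching $2$ costs $1/\eps$ steps). One can also check directly that the filling scheme here gives $\prstart{\sigma>1}{1}=\pi(2)(1-\eps)>\pi(2)(1/2-\eps)=d_1(1)$. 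So the integral $\int_0^\infty d_x(t)\,dt$ is simply the wrong quantity to control $\E_x[\sigma]$.

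The correct replacement for your second step is precisely what the paper invokes: Aldous's theorem that $\tstop\le c\,\mix$ for reversible chains, where $\tstop=\max_x\inf\{\E_x[\sigma]:X_\sigma\sim\pi\}$. This is a genuine theorem, not a tail-integration exercise; the known proofs go through the spectral representation or through strong stationary times combined with the separation--total-variation inequality $s(2t)\le 1-(1-\bar d(t))^2$, and this is exactly where reversibility enters. Your closing remark that ``reversibility is not actually used'' is therefore misleading: it is not used in the reduction, but it is essential in the input $\tstop\lesssim\mix$ that your argument needs and does not supply.
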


\begin{proof}[\bf Proof]
We start by recalling another notion of mixing introduced by Aldous~\cite{aldous82} in the continuous time case, and later studied in discrete time by Lov{\'a}sz
and Winkler \cite{LW}. We let
\begin{align}\label{eq:deftstop}
\begin{split}
t_{\mathrm{stop}}=\max_x \inf \{\mathbb{E}_{x}[T]: T &\text{
randomised stopping time} \\ 
&\text{s.t. } \prstart{X_{T} =y}{x} = \pi(y) \text{ for all } y \}.
\end{split}
\end{align}
Note that for every $x$ there is a randomised stopping time that achieves the infimum above (see~\cite{aldous82} and~\cite{LW}). 
Let $x\in \Omega$ and let $T$ be a stopping time such that $\prstart{X_{T} \in \cdot}{x} = \pi(\cdot)$ and $\mathbb{E}_x[T] \le t_{\mathrm{stop}}$. Writing $a_+=\max\{0,a\}$ we then get
\begin{equation*}
\mathbb{E}_x[T_A] \le \mathbb{E}_x[T]+\mathbb{E}_x[(T_{A}-T)_{+}] \le t_{\mathrm{stop}}+ \mathbb{E}_{\pi}[T_{A}],  
\end{equation*}
 where we have used $\mathbb{E}_x[(T_{A}-T)_{+}] \le \mathbb{E}_x[\inf\{t \ge T  :X_{t} \in A \}-T]= \mathbb{E}_{\pi}[T_{A}]$. 
The proof is now concluded using the result of Aldous~\cite{aldous82} that there exists a positive constant $c$ so that for all reversible chains $\tstop\leq c\mix$.  
 \end{proof}

\begin{lemma}
\label{lem:PauxPsrw}
Consider the auxiliary chain $Y$ from Definition~\ref{def:aux} with parameters $\mu$ and $p$. Denote its transition matrix by $P_{\mathrm{aux},(\mu,p)}$. Then for all $x,y$ such that $\{x,y\} \in E $ we have that
\begin{align*}
P_{\mathrm{aux},(\mu,p)}(x,y) &\ge  P_{\mathrm{SRW}}(x,y)p \frac{\mu}{1+\mu} \quad \text{ and } \quad \\ \frac{P_{\mathrm{aux},(\mu,p)}(x,y)+P^*_{\mathrm{aux},(\mu,p)}(x,y)}{2} &\ge  P_{\mathrm{SRW}}(x,y)p \frac{\mu}{1+\mu}. 
\end{align*}
\end{lemma}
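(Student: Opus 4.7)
The plan is to prove the first inequality by specifying a single favorable scenario whose probability already matches the claimed lower bound, and then deduce the second inequality from reversibility of simple random walk with respect to $\pi$.

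For the first inequality, I would start from the initial condition of Definition~\ref{def:aux}: $X_0=x$, $\eta_0\sim\pi_p$, $R_0=\emptyset$, and analyse what happens up to the first regeneration time $\tau_1$. Let $S$ be the first ring of the walk's rate-$1$ Poisson clock. I would lower bound $P_{\mathrm{aux},(\mu,p)}(x,y)=\P(X_{\tau_1}=y)$ by the probability of the following three-stage event: (i) the walk examines the neighbour $y$ at time $S$, (ii) the edge $\{x,y\}$ is open at time $S-$, and (iii) the very next event after time $S$ (ignoring refreshes of edges in $E\setminus R$, which are irrelevant to $\tau_1$) is the removal of $\{x,y\}$ from $R$. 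Event (i) has probability $1/\deg(x)=P_{\mathrm{SRW}}(x,y)$ and is independent of the environment. Event (ii), conditional on (i), has probability $p$: during $[0,S)$ the edge $\{x,y\}$ is not in $R$ and is refreshed only by its own rate-$\mu$ Poisson clock, which preserves $\mathrm{Ber}(p)$; since $\eta_0(\{x,y\})\sim\mathrm{Ber}(p)$ to begin with, the marginal at time $S-$ is still $\mathrm{Ber}(p)$. Once (i)-(ii) have occurred, the walk is at $y$ and $R=\{\{x,y\}\}$; event (iii) pits the rate-$1$ walk clock against the rate-$|R|\mu=\mu$ removal clock, so it has probability $\mu/(1+\mu)$. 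After (iii), $R$ returns to $\emptyset$, so this time is a regeneration and $Y_1=y$. Multiplying gives
\[
P_{\mathrm{aux},(\mu,p)}(x,y)\;\ge\;\frac{1}{\deg(x)}\cdot p\cdot\frac{\mu}{1+\mu}\;=\;P_{\mathrm{SRW}}(x,y)\,p\,\frac{\mu}{1+\mu}.
\]

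For the second inequality, I would use that $\pi$ is the stationary distribution of $P_{\mathrm{aux},(\mu,p)}$ by Lemma~\ref{lem:inv} and that SRW is reversible with respect to $\pi$ (since $\pi(x)P_{\mathrm{SRW}}(x,y)=1/(2|E|)$ is symmetric). Applying the first inequality with the roles of $x$ and $y$ swapped,
\[
\pi(x)P^{*}_{\mathrm{aux},(\mu,p)}(x,y)=\pi(y)P_{\mathrm{aux},(\mu,p)}(y,x)\ge\pi(y)P_{\mathrm{SRW}}(y,x)\,p\,\frac{\mu}{1+\mu}=\pi(x)P_{\mathrm{SRW}}(x,y)\,p\,\frac{\mu}{1+\mu},
\]
so $P^{*}_{\mathrm{aux},(\mu,p)}(x,y)\ge P_{\mathrm{SRW}}(x,y)\,p\,\mu/(1+\mu)$ too. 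Averaging the two one-sided bounds gives the additive-symmetrisation bound.

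The main obstacle, modest as it is, is step (ii): one must justify that, unconditionally on how the walk's clock is realised and on which neighbour will be chosen, the state of the edge $\{x,y\}$ at time $S-$ is still distributed as $\mathrm{Ber}(p)$. This follows from the observation above that on $[0,S)$ the edge is in $E\setminus R$ and so is governed solely by its own rate-$\mu$ refresh clock, together with the fact that the walk's uniform choice of neighbour is independent of the environment; a short measurability argument (conditioning on $S$ and on the refresh history of $\{x,y\}$) makes this precise. Everything else is a direct race-of-Poisson-clocks computation and requires no further input beyond the construction in Section~\ref{s:auxchain}.
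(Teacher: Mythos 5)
Your proof is correct and takes essentially the same approach as the paper: lower-bounding $P_{\mathrm{aux},(\mu,p)}(x,y)$ by the probability that the walk's first attempted jump is to $y$ along an open edge and that this edge is then refreshed (removed from $R$) before the next attempted jump, and then transferring the bound to $P^{*}_{\mathrm{aux},(\mu,p)}$ via stationarity of $\pi$ (Lemma~\ref{lem:inv}) and reversibility of SRW. Your step (ii) and the rate-$1$ versus rate-$\mu$ race simply spell out details the paper leaves implicit.
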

\begin{proof}[\bf Proof]

Start the environment at time 0 from its stationary distribution, and the walk co-ordinate of the full process from $x$. Now consider the event that the first attempt of a jump of the walk co-ordinate is to $y$,  that the edge $\{x,y\}$ is open at that time, and that before the walk attempts to make its second jump  the edge $\{x,y\}$ is refreshed. The probability of this event is at least $ P_{\mathrm{SRW}}(x,y)p \sfrac{\mu}{1+\mu} $ and at most $P_{\mathrm{aux},(\mu,p)}(x,y)$.
        
Using the first inequality, the fact that the invariant distribution for the auxiliary chain is the same as for the simple random walk on $G$ by Lemma~\ref{lem:inv} and that the simple random walk is a reversible chain, we get that for all $\{x,y\} \in E$
        \[
        \frac{P_{\mathrm{aux},(\mu,p)}(x,y)+P^*_{\mathrm{aux},(\mu,p)}(x,y)}{2}  \geq P_{\mathrm{SRW}}(x,y)p \frac{\mu}{1+\mu}
        \]
and this concludes the proof. 
\end{proof}

We are now ready to prove Theorem~\ref{thm:2}. 

\begin{proof}[\bf Proof of Theorem~\ref{thm:2}]

Let $X_0=x, \eta_0=\eta$ and suppose that the infection process $R$ starts with all edges of $G$ infected,  i.e.\ $|R_0|=|E|$. Let $\tau$ be the first regeneration time, i.e.\ the first time that the size of $R$ becomes $0$. Then for all $x,\eta$ using that $|R_t|$ evolves as a birth and death chain with $q(i,i+1)=1$ and $q(i,i-1)=\mu i$ we get that 
\begin{align}\label{eq:firstregen}
        \E[\tau] \asymp_{\mu} {\log |E|}.
\end{align}
We now claim that $\eta_\tau$ is distributed according to $\pi_p$ and is independent of $X_\tau$. Indeed, considering for every edge examined by the walk the last time before $\tau$ that this happened proves the claim, since edges that were not examined are distributed according to ${\rm{Ber}}(p)$.
        Therefore we obtain that 
        \begin{align}\label{eq:hitstat}
                \hit^{\mathrm{full},(\mu,p)} \leq  \E[\tau]+\max_{x,y\in V} \E^{(\mu,p)}_{x,\pi
                _p}\left[T_{y\times \{0,1\}^E}\right].
        \end{align}
        
Consider the auxiliary chain with parameters $\mu$ and $p$ and denote its transition matrix by $P_{\mathrm{aux},(\mu,p)}$ and let $S=(P_{\mathrm{aux},(\mu,p)}+P^*_{\mathrm{aux},(\mu,p)})/2$ be its additive symmetrisation. Let $P_{\mathrm{SRW}}$ denote the transition matrix of the simple random walk on $G$.

        Lemma~\ref{lem:PauxPsrw} immediately implies that $\EE_{S}(f,f) \geq \frac{p\mu}{1+\mu} \EE_{P_{\rm{SRW}}}(f,f)$ for all $f\in \R^V$. This together with Theorem~\ref{lem:commutetime} gives that for all $x,y\in V$
        \[
        p \sfrac{\mu}{1+\mu} \E_x^{S,(\mu,p)}[T_{yx}] \le \E_x^{\SRW}[T_{yx}].
        \]
Applying Lemma~\ref{lem:symmetriz} we obtain that 
\begin{align*}
        \E_x^{\rm{aux},(\mu,p)}[T_{yx}]\leq \E_x^{S,(\mu,p)}[T_{yx}].
\end{align*}
We next let 
\[
\til{T}_{yx} = \inf\left\{ t>T_{y\times \{0,1\}^E}: X_t = x\right\}.
\]
Let $\tau_i$ be the $i$-th regeneration time. Then by Lemma~\ref{lem:prelimforproof} the variables $\sigma_i = \tau_i-\tau_{i-1}$ are i.i.d.\ of mean~$e^{1/\mu}$. Using the natural coupling of the full process started from $(\delta_x,\pi_p)$ with the auxiliary chain started from $x$, we obtain 
\[
\til{T}_{yx} \leq \sum_{i=1}^{T_{yx}}\sigma_i,
\]
and hence using Wald's identity
\[
p \sfrac{\mu}{1+\mu} \E_x^{\pi_p,(\mu,p)}[\til{T}_{xy}] \le e^{1/\mu } \E_x^{\SRW}[T_{xy}].
\]
This together with~\eqref{eq:firstregen} and~\eqref{eq:hitstat}  now implies
\begin{align*}
\hit^{\mathrm{full},(\mu,p)} \lesssim_\mu \frac{1}{p} \hitsrw + \log |E|. 
\end{align*}
Using the fact that $\hitsrw \ge |V|-1 $ we can absorb the $\log |E|$ term inside the first term and this proves~\eqref{e:hitmain1}.

%
%
Next we prove  \eqref{e:hitmain2}. Let $x,y \in V $ and $\eta \in \{0,1\}^E$. By Lemma \ref{lem:hitmaxvsstat} \begin{equation}
\label{e:5.3}
\E_{x,\eta}^{(\mu_{i},p)}[T_{\{y\} \times \{0,1\}^E  }] \le c\mix^{\mathrm{full},(\mu_{i},p)}+ \E_{\pi_{\fu,p} }^{(\mu_{i},p)}[T_{\{y\} \times \{0,1\}^E  }] .  
\end{equation}
Recall \cite[Ch.\ 3]{aldous} that for a general reversible Markov chain on a finite state space with stationary distribution $\pi$,  for every set $A$ there exists a distribution $\mu_A$, known as the quasi-stationary distribution of $A$, such that 
\begin{equation}
\label{e:hitQS}
\E_{\pi}[T_A] \le\E_{\mu_A}[T_A]=1/\lambda(A) \le \max_x \E_x[T_A],
\end{equation}
where $\lambda(A)$ is the smallest eigenvalue of the generator of the chain killed upon exiting $A$. Applying this to $A={\{y\} \times \{0,1\}^E  }$ we have that 
\begin{align*}
\E_{\pi_{\fu,p} }^{(\mu_{1},p)}[T_{\{y\} \times \{0,1\}^E  }] &\le \frac{1}{\lambda_{(\mu_{1},p)}(\{y\} \times \{0,1\}^E)} \\
&\le \frac{\mu_2}{ \mu_1 \lambda_{(\mu_{2},p)}(\{y\} \times \{0,1\}^E)}\le \frac{\mu_2}{\mu_{1}}\hit^{\mathrm{full},(\mu_2,p)},    
\end{align*} 
where for the second inequality we used~\eqref{eq:lambdaa} together with the fact that if $\mu_1<\mu_2$, then 
\[
\mathcal{L}_1((x,\eta),(x',\eta')) \leq \tfrac{\mu_2}{\mu_1} \mathcal{L}_2((x,\eta),(x',\eta'))
\] 
with $\mathcal{L}_i$ denoting the generator of the full process when the rate at which the edges update is $\mu_i$.  This together with~\eqref{e:5.3} concludes the proof.
\end{proof}

\section{The spectral-profile}
\label{s:profiles}

We now recall a couple of results from \cite{spectral}. While some of the results below were originally stated in the case where the infinitesimal generator $\cL $ is of the form $K-I$, where $I$ is the identity matrix and $K$ is a transition matrix of a discrete-time Markov chain (possibly with non-zero diagonal entries), they hold for general $\cL$ when the state space is finite, as we can always write $\cL:=c(K-I) $ for some $c>0$ and some transition matrix $K$, possibly with positive diagonal entries. (Namely $c=\max (-\cL(x,x))$ and $K=\cL/c +I$.) (All the quantities considered below scale linearly in $c$.)

Consider an irreducible continuous-time Markov chain on a finite state space $\Omega$ with infinitesimal generator $\cL$ and stationary distribution $\pi$. Let $P_t:=e^{t \cL} $. Recall that the time $t$ transition probabilities are given by $P_t(\cdot,\cdot)$ and  that the corresponding \emph{rates} are given by $\cL(x,y):=\lim_{h \to 0} h^{-1}(P_{h}(x,y)-\1(x=y))$. Denote the infinitesimal generators of the \emph{time-reversal} and \emph{additive-symmetrization} by $\cL^* $ and $\cL^{\mathrm{s}}:=\half(\cL+\cL^*) $, respectively, where  $\cL^* $ is the dual operator of $\cL$ in $(L_2(\Omega),\langle  \cdot,\cdot \rangle_{\pi}) $, whose rates are given explicitly by $\pi(x) \cL^*(x,y):=\pi(y) \cL(y,x)$, where   $\langle  f,g \rangle_{\pi}:=\E_{\pi}[fg] $ is the inner-product on $\R^{\Omega}$ induced by $\pi$ and $\E_{\pi}[h]:=\sum_{x \in \Omega} \pi(x) h(x)$ the stationary expectation of $h \in \R^{\Omega}$. We say that $\cL$ (and the corresponding Markov chain) is \emph{reversible} if $\cL=\cL^* $. One can readily check that $\pi$ is stationary also for $\cL^*$ and that $\cL^{\mathrm{s}}$ is reversible w.r.t.\ $\pi$, as $(\cL^*)^*=\cL $.

Recall that for $\epsilon>0$ we denote the spectral profile
\begin{equation}
\label{e:la}
\Lambda(\eps)=\min \{\EE(h,h) : h \in \R^{\Omega}, \, \Var_{\pi}(h)=1,\, \pi( \mathrm{supp}(h)) \le \eps   \},
\end{equation}
 where $ \mathrm{supp}(h)=\{x \in \Omega :h(x) \neq 0 \}$ is the \emph{support} of $h$. We write $\lambda=\Lambda(1)$ for the \emph{Poincar\'e constant}. We recall that in the reversible setting $\lambda$ is equal to the spectral gap.  A related notion is
\begin{equation}
\label{e:la0}
\Lambda_{0}(\eps)=\min \{\EE(h,h): h \in \R^{\Omega}, \, \|h\|_2=1, \,  \pi( \mathrm{supp}(h)) \le \eps   \},
\end{equation}
where for $h \in \R^{\Omega}$,  $\|h\|_{\infty}=\max_{x \in \Omega} |h(x)|$ and $\|h\|_p=(\E_{\pi}[|h|^{p}])^{1/p} $ are the $L_{\infty}$ and $L_p$ ($p \in [1,\infty)$) norms respectively. 
By Cauchy-Schwarz 
\[
\|h\|_1^2=\|h \1(\mathrm{supp}(h)) \|_1^2 \le \|h\|_2^2 \pi( \mathrm{supp}(h)),
\]
and so $ 1-\pi( \mathrm{supp}(h)) \le \sfrac{\Var_{\pi}(h)}{\|h\|_2^2} \le 1$. Hence we get that
\begin{equation}
\label{e:lala0}
(1-\eps) \Lambda(\eps) \le \Lambda_{0}(\eps) \le \Lambda(\eps). 
 \end{equation}
As we now explain, $\Lambda_0$ has a probabilistic interpretation, which we shall exploit later on. 
For each $A \subsetneq \Omega $ we define $\cL_A$ to be the generator of the chain that is killed upon exiting $A$, whose rates are given explicitly by $\cL_A(u,v)=\cL(u,v)\1(u,v\in A)$, and define $\lambda(A) $ to be the smallest eigenvalue of $-\cL_A^{\mathrm{s}}=-\half(\cL_A+\cL_A^{*}) $. Under reversibility
\begin{equation}
\label{e:laexit}
\lambda(A)  =\sup \left\{c: \max_{a \in A} \E_a[\exp (c T_{A^{c}} )]< \infty \right\}=\sup \left\{c:  \E_{\pi}[\exp (c T_{A^{c}} )]< \infty \right\}. 
\end{equation}

Indeed, if $B=\sup \{c: \max_{a \in A} \E_a[\exp (c T_{A^{c}} )]< \infty \} $ and $C=\sup \{c:  \E_{\pi}[\exp (c T_{A^{c}} )]< \infty \}$, then by definition $C \ge B $, but as $\pi$ has full support we actually have that $C=B $. Now, it is classical (\cite[Ch.\ 4]{aldous} or \cite[\S 3]{Basu}) that under reversibility the law of $T_{A^{c}}$ under initial distribution $\pi$ is a mixture of  an atom at 0 of mass $\pi(A^{c}) $ and of exponential distributions whose  minimal parameter is exactly~$\lambda(A)$. From which we easily get that $\lambda(A)=C$.  

Writing $\cL=c(P-I)$ for some transition matrix $P$ and $c>0$ and using the Perron-Frobenius Theorem (to argue that the minimum can be attained by some non-negative $h$) we also have that (e.g.\ \cite[Ch.\ 3]{aldous})
\begin{equation}
\label{e:laAextremalchar}
\begin{split}
\lambda(A)  & =\min \{ \EE(h,h): h \in \R^{\Omega},\, \|h\|_2=1,\, \mathrm{supp}(h) \subseteq A    \} \\ & =\min \{ \EE(h,h): h \in \R_{+}^{\Omega},\, \|h\|_2=1,\, \mathrm{supp}(h) \subseteq A    \}, 
   \end{split}
\end{equation}
and so
\begin{equation}
\label{e:laAla0}
 \Lambda_{0}(\eps)=\min \{ \lambda(A):\pi(A) \le \eps \}. 
\end{equation}  

\subsection{Decay of $L_2$ distances  via the spectral-profile and the Poincar\'e constant}

Recall that the $L_p$ norm (w.r.t.\ $\pi$) of a signed measure $\sigma$ is defined as
\begin{equation*}
\label{eq: Lpdef}
\|\sigma \|_{p,\pi}=\|\sigma / \pi \|_p, \quad \text{where} \quad (\sigma / \pi)(x)=\sigma(x) / \pi(x).
\end{equation*}
In particular, for a distribution $\nu$ its $L_2$ distance from $\pi$ satisfies
\[\|\nu - \pi \|_{2,\pi}^2=\|\nu / \pi  - 1\|_2^{2}=\mathrm{Var}_{\pi}(\nu / \pi). \]
Let $\nu_t=\mathrm{P}_{\nu}^t:=\nu e^{\mathcal{L}t}$ and $u_t=\nu_{t} / \pi=e^{t \cL^* }(\sfrac{\nu}{\pi}) $ (where $\nu e^{\mathcal{L}t}(x)=\sum_{y}\nu(y)P_t(y,x)$, while $e^{t \cL^* }f(x)=\sum_y e^{t \cL^* }(x,y)f(y)$). For  $f \in \R^{\Omega}$, writing $f_t=e^{t \cL^* }f$ we  have $\frac{d}{dt}\mathrm{Var}_{\pi}(f_{t} )=\langle\mathcal{L}^{*}f_{t} ,f_{t} \rangle_{\pi}+\langle f_{t} ,\mathcal{L}^{*}f_{t} \rangle_{\pi}=2\langle \cL f_{t} ,f_{t} \rangle_{\pi}=-2 \EE(f_{t} ,f_{t} ) $ (cf.\ \cite[p.\ 284]{levin}) and so 
\begin{equation}
\label{e:ddtut}
\frac{d}{dt}\mathrm{Var}_{\pi}(u_{t} )=\frac{d}{dt}\mathrm{Var}_{\pi}(e^{t \cL^* }(\sfrac{\nu}{\pi}) )=-2 \EE(u_{t},u_{t}).
 \end{equation}
By~\eqref{e:la} we get $ \EE(u_{t},u_{t}) \ge  \lambda   \mathrm{Var}_{\pi}(u_{t} ) $ from which it follows that $\frac{d}{dt}\mathrm{Var}_{\pi}(u_{t} ) \le -2\lambda  \mathrm{Var}_{\pi}(u_{t} ) $, and so by Gr\"onwall's lemma
\begin{equation}
\label{e:Poincare}
\|\nu_{t} - \pi \|_{2,\pi}^2 \le \|\nu - \pi \|_{2,\pi}^2 \exp(-2\lambda t).  
\end{equation}
This is the well-known Poincar\'e inequality. The $\eps$ $L_p$-\emph{mixing time} is defined as
\[t_{\mathrm{mix}}^{(p)}(\eps)=\inf\{t:\max_{x}\|P^t(x,\cdot) - \pi\|_{p,\pi} \le \eps \}. \]
 It is standard (\textit{e.g.}\ \cite{spectral} or \cite[Prop.\ 4.15]{levin}) that for reversible Markov chains, for all $x \in \Omega $ and $t$ we have  \begin{equation}
\label{e:maxdiag}
\max_{y,z \in \Omega}|\sfrac{P_t(y,z)}{\pi(z)}-1|=\max_{y}\sfrac{P_t(y,y)}{\pi(y)}-1  \quad \text{and} \quad \|P_{x}^t - \pi \|_{2,\pi}^2=\sfrac{P_{2t}(x,x)}{\pi(x)}-1.
\end{equation}  Thus  $t_{\mathrm{mix}}^{(\infty)}(\eps^2)=2 t_{\mathrm{mix}}^{(2)}(\eps) $ for all $\eps \le ( \max_x \frac{1-\pi(x)}{\pi(x)})^{1/2}$. 

Theorem 1.1 in \cite{spectral} asserts that
\begin{equation}
\label{e:spb1}
\forall \, \eps \in (0,1/\pi_* ] , \quad t_{\mathrm{mix}}^{(\infty)}(\eps) \le t_{\mathrm{spectral-profile}}(\eps).
\end{equation}
We shall use a variant of this where we want to bound the $L_{2}$ mixing-time starting from some initial distribution $\nu$ for which we have a decent upper bound on $\|\nu-\pi\|_{2,\pi}$.
\begin{proposition}[\cite{spectral} Lemma 2.1]
\label{p:spectral2} For any (non-zero)  $u \in \R_{+}^{\Omega}$ we have that
\[
\frac{\EE(u,u)}{\Var_{\pi}(u)} \ge \half \Lambda \left( 4\|u\|_1^2/ \Var_{\pi}(u) \right).
\]
\end{proposition}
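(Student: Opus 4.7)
The strategy is a truncation argument that reduces the inequality to the definition of $\Lambda$ applied to a function with small support. Set
\[
c := \frac{\Var_\pi(u)}{4\|u\|_1}, \qquad v := (u - c)_+, \qquad q := (c - u)_+,
\]
so that $v - q = u - c$ and $vq \equiv 0$. The case $4\|u\|_1^2/\Var_\pi(u) \ge 1$ is trivial: then $\Lambda(4\|u\|_1^2/\Var_\pi(u)) = \Lambda(1)$ is the Poincar\'e constant and the bound $\EE(u,u) \ge \Lambda(1)\Var_\pi(u)$ is exactly the definition of $\Lambda(1)$. So assume the argument of $\Lambda$ lies in $(0,1)$.

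First I would verify $\EE(v,v) \le \EE(u,u)$. Since $t \mapsto (t-c)_+$ is $1$-Lipschitz, $(v(x)-v(y))^2 \le (u(x)-u(y))^2$ for all $x,y$; combined with the identity $\EE(f,f) = \tfrac{1}{2}\sum_{x,y}\pi(x)\cL(x,y)(f(x)-f(y))^2$ (valid for any generator, using the row-sum zero condition on $\cL$ and stationarity of $\pi$) and the nonnegativity of $\cL(x,y)$ for $x\neq y$, the comparison follows termwise. Next, by Markov's inequality,
\[
\pi(\mathrm{supp}(v)) \le \pi(u > c) \le \frac{\|u\|_1}{c} = \frac{4\|u\|_1^2}{\Var_\pi(u)},
\]
which is exactly the argument of $\Lambda$ in the statement.

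The core step is the variance bound $\Var_\pi(v) \ge \tfrac{1}{2}\Var_\pi(u)$. Because $v$ and $q$ have disjoint supports, $(u-c)^2 = v^2 + q^2$ pointwise, so
\[
\E_\pi v^2 + \E_\pi q^2 \;=\; \E_\pi(u-c)^2 \;=\; \Var_\pi(u) + (\|u\|_1 - c)^2.
\]
Using $0 \le q \le c$ to write $\E_\pi q^2 \le c^2$, and then substituting $c = \Var_\pi(u)/(4\|u\|_1)$ to obtain $2c\|u\|_1 = \tfrac{1}{2}\Var_\pi(u)$,
\[
\E_\pi v^2 \;\ge\; \Var_\pi(u) + \|u\|_1^2 - 2c\|u\|_1 \;=\; \tfrac{1}{2}\Var_\pi(u) + \|u\|_1^2.
\]
Since $0 \le v \le u$, we have $(\E_\pi v)^2 \le (\E_\pi u)^2 = \|u\|_1^2$, and therefore $\Var_\pi(v) = \E_\pi v^2 - (\E_\pi v)^2 \ge \tfrac{1}{2}\Var_\pi(u)$.

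Finally, applying the definition of $\Lambda$ to $v$ and chaining the three bounds,
\[
\EE(u,u) \;\ge\; \EE(v,v) \;\ge\; \Lambda\!\bigl(\pi(\mathrm{supp}(v))\bigr)\,\Var_\pi(v) \;\ge\; \Lambda\!\left(\frac{4\|u\|_1^2}{\Var_\pi(u)}\right) \cdot \tfrac{1}{2}\Var_\pi(u),
\]
where the last step uses monotonicity of $\Lambda$. I do not expect any serious obstacle; the only delicate point is matching the constant in the Markov support bound with the one arising from the identity $\E_\pi v^2 + \E_\pi q^2 = \Var_\pi(u) + (\|u\|_1 - c)^2$, and this is exactly what pins down the choice $c = \Var_\pi(u)/(4\|u\|_1)$ and produces both the factor $\tfrac{1}{2}$ and the argument $4\|u\|_1^2/\Var_\pi(u)$ of $\Lambda$ in the proposition.
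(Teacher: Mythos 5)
Your proof is correct and follows essentially the same route as the paper's: the same truncation level $c=\Var_\pi(u)/(4\|u\|_1)$, the same truncated function $(u-c)_+$, the same Markov bound on its support, and the same factor $\tfrac12$ from $2c\|u\|_1=\tfrac12\Var_\pi(u)$. The only difference is cosmetic — you derive the variance lower bound via the orthogonal decomposition $(u-c)^2=v^2+q^2$ with $\E_\pi q^2\le c^2$, whereas the paper uses the equivalent pointwise inequality $(u-c)_+^2\ge u^2-2cu$.
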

We recall the proof from \cite{spectral} for the reader's convenience.
\begin{proof}[\bf Proof]
Let $M=\Var_{\pi}\left(\frac{u}{\sqrt{4\|u\|_{1}}}\right)$ and $B=\{x:\, u(x) \ge M \}$. The set $B$ is non-empty by H\"older's inequality.  Considering $\hat u:=(u-M)1_{B}=(u-M)_+ $ we now have $\EE(u,u) \ge \EE(\hat u, \hat u) $. Also,  \[  \Var_{\pi}(\hat u)  \ge \E_{\pi}[(u-M)_{+}^{2}]-\left(\E_{\pi}[u]\right)^{2} \ge \|u\|_2^2-2M\|u\|_1-\|u\|_1^2  \]
\[= \Var_{\pi}(u)-2M\|u\|_1 =\half  \Var_{\pi}(u). \]  Finally, $2\sfrac{\EE(u,u)}{\Var_{\pi}(u)} \ge \sfrac{\EE(\hat u, \hat u)}{\Var_{\pi}(\hat u)} \ge \Lambda \left( \pi(\mathrm{supp}(\hat u)) \right) \ge \Lambda (\sfrac{ 4\|u\|_1^2}{ \Var_{\pi}(u)}) $, since by Markov's inequality $\pi(\mathrm{supp}(\hat u)) \le \sfrac{\|u\|_1}{M}  =\sfrac{ 4\|u\|_1^2}{ \Var_{\pi}(u)}  $. 
\end{proof}
Recall that $u_t:=\nu_t /\pi $ and $\nu_t:=\mathrm{P}_{\nu}^t$. Using Proposition \ref{p:spectral2} and \eqref{e:ddtut} it is not difficult to verify the assertion of the following proposition,  which  refines \eqref{e:Poincare} and \eqref{e:spb1}.
\begin{proposition}[\cite{spectral} Theorem 1.1]
\label{p:spectral3} 

For any initial distribution $\nu $ we have that
\begin{equation}
\label{e:spb2}
\|\nu_{t} - \pi \|_{2,\pi}^2 \le M , \quad \text{if} \quad t \ge \int_{4/\|\nu - \pi \|_{2,\pi}^2}^{4/M} \frac{d \delta}{\delta \Lambda(\delta) }. 
  \end{equation}
  In particular, for all $0<\beta <1$ we have that 
\begin{equation}
\label{e:spb3}
\|\nu_{t} - \pi \|_{2,\pi}^2 \le \beta  \|\nu - \pi \|_{2,\pi}^2  , \quad \text{if} \quad t \ge \frac{\log (1/\beta )}{\Lambda \left(4/(\beta  \|\nu - \pi \|_{2,\pi}^2)\right)}  .
 \end{equation}
\end{proposition}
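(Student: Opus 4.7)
The plan is to reduce the proposition to a differential inequality for $V(t) := \Var_\pi(u_t) = \|\nu_t - \pi\|_{2,\pi}^2$, where $u_t := \nu_t/\pi = e^{t\cL^*}(\nu/\pi)$, and then separate variables. The crucial point is that $u_t$ is non-negative and has $L_1$-norm equal to one for every $t \ge 0$: indeed $\nu_t$ is a probability measure, so $\|u_t\|_1 = \sum_x \pi(x) (\nu_t(x)/\pi(x)) = \sum_x \nu_t(x) = 1$. This makes $u_t$ the ideal test function for Proposition \ref{p:spectral2}.

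First I would apply Proposition \ref{p:spectral2} to $u_t$, which yields
\[
\EE(u_t,u_t) \ge \tfrac{1}{2}\,\Lambda\!\left(4/V(t)\right) V(t),
\]
since $4\|u_t\|_1^2 / V(t) = 4/V(t)$. Combining this with the identity $V'(t) = -2\EE(u_t,u_t)$ from \eqref{e:ddtut} gives the differential inequality
\[
V'(t) \le -V(t)\,\Lambda\!\left(4/V(t)\right).
\]
Noting that $V$ is smooth (finite state space) and non-increasing, I would then separate variables and integrate from $0$ to $t$ to obtain
\[
t \le \int_{V(t)}^{V(0)} \frac{dv}{v\,\Lambda(4/v)}.
\]
The change of variables $\delta = 4/v$ (so $dv/v = -d\delta/\delta$) transforms this into
\[
t \le \int_{4/V(0)}^{4/V(t)} \frac{d\delta}{\delta\,\Lambda(\delta)}.
\]
The bound \eqref{e:spb2} now follows by contraposition: if $V(t) > M$ then $4/V(t) < 4/M$, so $t < \int_{4/\|\nu-\pi\|_{2,\pi}^2}^{4/M} d\delta/(\delta\,\Lambda(\delta))$, contradicting the hypothesis.

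For \eqref{e:spb3} I would simply apply \eqref{e:spb2} with $M = \beta\|\nu-\pi\|_{2,\pi}^2$ and use monotonicity of $\Lambda$: since the admissible set in \eqref{e:la} grows with $\eps$, $\Lambda$ is non-increasing, so on the interval $[4/V(0), 4/(\beta V(0))]$ we have $\Lambda(\delta) \ge \Lambda(4/(\beta V(0)))$, giving
\[
\int_{4/V(0)}^{4/(\beta V(0))} \frac{d\delta}{\delta\,\Lambda(\delta)} \le \frac{1}{\Lambda(4/(\beta V(0)))}\int_{4/V(0)}^{4/(\beta V(0))} \frac{d\delta}{\delta} = \frac{\log(1/\beta)}{\Lambda(4/(\beta V(0)))}.
\]

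There is no real obstacle: the only nontrivial input is Proposition \ref{p:spectral2}, which has already been established, and everything else is calculus (Gr\"onwall plus a change of variables). The one spot that warrants care is handling the case $V(t) = 0$ in the separation of variables, but once $V$ hits zero it stays at zero (the chain has equilibrated in $L_2$), so both sides of \eqref{e:spb2} are trivially consistent and we can restrict the differential argument to $\{t : V(t) > 0\}$.
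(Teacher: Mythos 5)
Your argument is correct and follows exactly the route the paper indicates (and that \cite{spectral} uses): apply Proposition \ref{p:spectral2} to $u_t$, which is admissible since $u_t\ge 0$ and $\|u_t\|_1=1$, combine with \eqref{e:ddtut} to get $V'(t)\le -V(t)\Lambda(4/V(t))$, separate variables, and deduce \eqref{e:spb3} from \eqref{e:spb2} via the monotonicity of $\Lambda$. The handling of the degenerate case $V(t)=0$ and the contrapositive step are both fine, so there is nothing to add.
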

We shall sometimes use the following simple variant of \eqref{e:spb3}.

\begin{lemma}
If $\|\nu - \pi \|_{2,\pi}^2 \le M $ then
for all $0<\beta <1$ we have that 
\begin{equation}
\label{e:spb4}
\|\nu_{t} - \pi \|_{2,\pi}^2 \le \beta  M  , \quad \text{if} \quad t \ge \frac{\log (1/\beta )}{\Lambda \left(4/(\beta M)\right)}  .
 \end{equation}
 \end{lemma}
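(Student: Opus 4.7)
The plan is to deduce \eqref{e:spb4} directly from \eqref{e:spb3} together with the monotonicity of the spectral profile $\Lambda$. The key observation is that $\Lambda$ is non-increasing in $\varepsilon$: enlarging $\varepsilon$ only relaxes the constraint $\pi(\mathrm{supp}(h)) \le \varepsilon$ in \eqref{e:la}, so the infimum is taken over a larger set and can only decrease. This is really the only structural fact about $\Lambda$ that is needed.

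Now, given the hypothesis $\|\nu - \pi\|_{2,\pi}^2 \le M$, we have $\tfrac{4}{\beta M} \le \tfrac{4}{\beta \|\nu - \pi\|_{2,\pi}^2}$, and monotonicity gives
\[
\Lambda\!\left(\frac{4}{\beta \|\nu - \pi\|_{2,\pi}^2}\right) \le \Lambda\!\left(\frac{4}{\beta M}\right).
\]
Consequently
\[
\frac{\log(1/\beta)}{\Lambda(4/(\beta \|\nu - \pi\|_{2,\pi}^2))} \ge \frac{\log(1/\beta)}{\Lambda(4/(\beta M))}.
\]
Wait, that inequality goes the wrong direction for what we want. Let me re-examine: if $\|\nu - \pi\|_{2,\pi}^2 \le M$, then $\tfrac{4}{\beta M} \le \tfrac{4}{\beta \|\nu - \pi\|_{2,\pi}^2}$, and since $\Lambda$ is non-increasing, $\Lambda(4/(\beta M)) \ge \Lambda(4/(\beta \|\nu-\pi\|_{2,\pi}^2))$. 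Hence the threshold in \eqref{e:spb4} satisfies
\[
\frac{\log(1/\beta)}{\Lambda(4/(\beta M))} \le \frac{\log(1/\beta)}{\Lambda(4/(\beta \|\nu - \pi\|_{2,\pi}^2))}.
\]
This is the wrong direction to simply invoke \eqref{e:spb3}, so a bit more care is needed.

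The fix is to rerun the argument leading to \eqref{e:spb3} directly, but using $M$ in place of $\|\nu-\pi\|_{2,\pi}^2$ everywhere. Concretely, set $V(t) := \|\nu_t - \pi\|_{2,\pi}^2 = \Var_\pi(u_t)$ with $u_t = \nu_t/\pi$. By \eqref{e:ddtut} and Proposition~\ref{p:spectral2},
\[
\frac{d}{dt} V(t) = -2\EE(u_t,u_t) \le -V(t)\,\Lambda\!\left(\frac{4\|u_t\|_1^2}{V(t)}\right) = -V(t)\,\Lambda\!\left(\frac{4}{V(t)}\right),
\]
using $\|u_t\|_1 = 1$. As long as $V(t) \ge \beta M$, monotonicity of $\Lambda$ gives $\Lambda(4/V(t)) \ge \Lambda(4/(\beta M))$, and hence $V'(t) \le -\Lambda(4/(\beta M))\,V(t)$. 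Integrating from $0$ to $t$ and using $V(0) \le M$ yields
\[
V(t) \le M \exp\!\bigl(-t\,\Lambda(4/(\beta M))\bigr),
\]
so the first time $V$ reaches $\beta M$ is at most $\log(1/\beta)/\Lambda(4/(\beta M))$, which is exactly the stated bound. There is no real obstacle here: the whole point of the lemma is that one does not need to know $\|\nu-\pi\|_{2,\pi}^2$ exactly, only an upper bound $M$, and the spectral-profile differential inequality respects this because $\Lambda$ is monotone.
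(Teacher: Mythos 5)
Your argument is correct, but it takes a different route from the paper's. The paper proves \eqref{e:spb4} as a two-line formal corollary of \eqref{e:spb3}: it splits into the case $\|\nu-\pi\|_{2,\pi}^2\le\beta M$ (where $L_2$-contraction finishes immediately) and the case $\|\nu-\pi\|_{2,\pi}^2\in(\beta M,M]$, where it applies \eqref{e:spb3} with the modified parameter $\beta'=\beta M/\|\nu-\pi\|_{2,\pi}^2\ge\beta$, chosen precisely so that $\beta'\|\nu-\pi\|_{2,\pi}^2=\beta M$ and hence the argument of $\Lambda$ becomes exactly $4/(\beta M)$ while $\log(1/\beta')\le\log(1/\beta)$. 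You correctly diagnosed in your false start why a naive appeal to \eqref{e:spb3} plus monotonicity of $\Lambda$ fails (the threshold comparison goes the wrong way), and your fix --- rerunning the Gr\"onwall argument from \eqref{e:ddtut} and Proposition~\ref{p:spectral2} with the stopping level $\beta M$ --- is sound: on the interval where $V(t)\ge\beta M$ you get $V'(t)\le-\Lambda(4/(\beta M))V(t)$, so $V$ hits level $\beta M$ by time $\log(1/\beta)/\Lambda(4/(\beta M))$, and since $V'(t)=-2\EE(u_t,u_t)\le 0$ it stays below that level afterwards (you should state this last monotonicity step explicitly, and note that your intermediate bound $V(t)\le Me^{-t\Lambda(4/(\beta M))}$ is only claimed up to the first hitting time of $\beta M$, which is all you use). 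What the two approaches buy: the paper's is shorter given that \eqref{e:spb3} is already on record, whereas yours is self-contained modulo Proposition~\ref{p:spectral2} and makes transparent the real point of the lemma --- the differential inequality only ever sees the current value of $V(t)$ and the monotone function $\Lambda$, so only an upper bound on the initial $L_2$ distance is needed.
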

\begin{proof}[\bf Proof]
If $\|\nu - \pi \|_{2,\pi}^2 \le \beta M$ this follows from the fact that $P_t$ is a contraction in $L_2$, i.e.\ $\|\nu_{t} - \pi \|_{2,\pi}^2 \le \|\nu - \pi \|_{2,\pi}^2 \le \beta M $. If $\|\nu - \pi \|_{2,\pi}^2 \in (\beta M,M ] $ then by \eqref{e:spb3} for $\beta'=\sfrac{\beta M}{\|\nu - \pi \|_{2,\pi}^2 } \ge \beta $, $s=\frac{\log (1/\beta' )}{\Lambda \left(4/(\beta'  \|\nu - \pi \|_{2,\pi}^2)\right)} =\frac{\log (1/\beta' )}{\Lambda \left(4/(\beta  M)\right)}\le \frac{\log (1/\beta )}{\Lambda \left(4/(\beta  M)\right)} $ we have  $\|\nu_{t} - \pi \|_{2,\pi}^2 \le \beta' \|\nu - \pi \|_{2,\pi}^2=\beta M$.  
\end{proof}

 \subsection{A lower bound on $L_2$ distances in terms of small sets probabilities}
Let $\PP(\Omega)$ be the collection of all distributions on $ \Omega $. Let $A  \subsetneq \Omega $ and $\delta \in (0,1) $.  Let \[\PP_{A,\delta}:=\{\nu \in \PP(\Omega): \nu(A) \ge \pi(A)+\delta \pi (A^c) \}.\] Note that \[\nu_{A,\delta}:= \delta \pi_A+(1-\delta)\pi \in \PP_{A,\delta}, \]
where $\pi_A(a):=\frac{1_{\{a \in A\}\pi(a) }}{\pi(A)}$ is $\pi$ conditioned on $A$. Moreover, $\min \{\delta':\nu_{A,\delta'} \in \PP_{A,\delta}   \}=\delta $. It is thus intuitive that for a convex distance function between distributions, $\nu_{A,\delta} $ is the closest distribution to $\pi$ in $\PP_{A,\delta}$.  
\begin{proposition}[\cite{L2} Proposition 4.1]
\label{prop: Lagrange}
 Let $A  \subsetneq V $. Denote $\nu_{A,\delta}= \delta \pi_A+(1-\delta)\pi$.  Then
\begin{equation}
\label{eq: Lagrange}
\forall \, \delta \in (0,1)  \quad  \min_{\nu \in \PP_{A,\delta}}\| \nu-\pi\|_{2,\pi}^2=\|\nu_{A,\delta}-\pi\|_{2,\pi}^2=\delta^{2} \pi(A^{c})/\pi(A).
\end{equation}
\end{proposition}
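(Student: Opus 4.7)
The plan is to establish the inner (equality) claim by a direct calculation and the outer (minimization) claim by Cauchy--Schwarz applied separately on $A$ and on $A^{c}$. First I would verify $\|\nu_{A,\delta}-\pi\|_{2,\pi}^{2}=\delta^{2}\pi(A^{c})/\pi(A)$ by noting that the density $\nu_{A,\delta}/\pi-1$ takes the value $\delta\pi(A^{c})/\pi(A)$ on $A$ and $-\delta$ on $A^{c}$; squaring, integrating against $\pi$, and using $\pi(A)+\pi(A^{c})=1$ collapses the two contributions to the claimed number. The same computation shows $\nu_{A,\delta}(A)=\pi(A)+\delta\pi(A^{c})$, so $\nu_{A,\delta}\in\PP_{A,\delta}$ and this already supplies the upper bound on the infimum.

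For the matching lower bound I would take an arbitrary $\nu\in\PP_{A,\delta}$, set $f:=\nu-\pi$, and let $a:=\sum_{x\in A}f(x)$. Since $\nu$ and $\pi$ are probability measures one has $\sum_{x\in A^{c}}f(x)=-a$, and the defining constraint reads exactly $a\geq\delta\pi(A^{c})$. Cauchy--Schwarz in the form
\[
\Big(\sum_{x\in A}\tfrac{f(x)}{\sqrt{\pi(x)}}\sqrt{\pi(x)}\Big)^{2}\leq\pi(A)\sum_{x\in A}\tfrac{f(x)^{2}}{\pi(x)},
\]
together with its twin on $A^{c}$, gives $\sum_{x\in A}f(x)^{2}/\pi(x)\geq a^{2}/\pi(A)$ and $\sum_{x\in A^{c}}f(x)^{2}/\pi(x)\geq a^{2}/\pi(A^{c})$. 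Summing these, using $1/\pi(A)+1/\pi(A^{c})=1/(\pi(A)\pi(A^{c}))$, and invoking $a^{2}\geq\delta^{2}\pi(A^{c})^{2}$, yields $\|\nu-\pi\|_{2,\pi}^{2}\geq\delta^{2}\pi(A^{c})/\pi(A)$.

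To close the loop I would observe that equality in both Cauchy--Schwarz steps forces $f/\pi$ to be constant on each of $A$ and $A^{c}$; combined with $a=\delta\pi(A^{c})$ and $\sum_{x}f(x)=0$ this pins down $\nu=\nu_{A,\delta}$, so the infimum is in fact attained there. I do not anticipate a real obstacle: the only subtlety worth flagging is that the Cauchy--Schwarz argument a priori minimizes over signed measures with the prescribed totals on $A$ and $A^{c}$ rather than over $\PP_{A,\delta}$ itself, but since $\nu_{A,\delta}\geq 0$ for every $\delta\in(0,1)$, the unconstrained optimizer is automatically a probability measure and the non-negativity requirement hidden inside $\PP_{A,\delta}$ is not active.
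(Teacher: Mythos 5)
Your argument is correct. The paper itself does not write out a proof: it simply says the result is ``an exercise in Lagrange multipliers'' and points to \cite{L2}, Proposition 4.1. Your route is genuinely different and fully self-contained: you compute the density $\nu_{A,\delta}/\pi-1$ explicitly (getting $\delta\pi(A^c)/\pi(A)$ on $A$ and $-\delta$ on $A^c$), which gives both the value $\delta^2\pi(A^c)/\pi(A)$ and membership $\nu_{A,\delta}\in\PP_{A,\delta}$, and then you obtain the matching lower bound for an arbitrary $\nu\in\PP_{A,\delta}$ by applying Cauchy--Schwarz separately on $A$ and on $A^c$ to $f=\nu-\pi$, using $f(A)=-f(A^c)=a\ge\delta\pi(A^c)$ and $1/\pi(A)+1/\pi(A^c)=1/(\pi(A)\pi(A^c))$. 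What this buys over the Lagrange-multiplier route is that the two constraints that make the variational problem slightly delicate --- the inequality constraint $\nu(A)\ge\pi(A)+\delta\pi(A^c)$ and the positivity constraint hidden in $\PP(\Omega)$ --- never need to be handled via KKT-type conditions: monotonicity in $a$ shows the first constraint is tight at the optimum, and your closing remark correctly observes that the unconstrained optimizer is already a probability measure, so the second is inactive. The equality analysis in Cauchy--Schwarz also identifies the minimizer as exactly $\nu_{A,\delta}$, which is slightly more than the proposition asks for. The only (implicit) hypothesis worth keeping in mind is $\pi(A)>0$, which is needed for $\pi_A$ and for the right-hand side to make sense, and which holds in all uses in the paper.
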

The proof is an exercise in Lagrange multipliers (see \cite{L2} Proposition 4.1).
\subsection{A mixing time bound for the $p$-tilted hypercube}
We call the hypercube $\{0,1\}^d$ equipped with the product measure $\pi_p$ the $p$-\emph{tilted hypercube}. The natural dynamics associated with it is the one at which each co-ordinate is updated at rate $\mu$ and takes the value $1$ with probability $p$ and $0$ with probability $1-p$. This is precisely the evolution of the environment $\eta$.
\begin{lemma}
\label{lem:ptiltedhypermixing}
Denote by $P_t$ the time $t$ transition kernel of the $p$-tilted hypercube with update rate $\mu$ for each co-ordinate. Let $\alpha :=p \wedge (1-p)$.  Let $t(\delta)=t_{p,\mu}(\delta):=\frac{1}{\mu}\log(d \sfrac{1-\alpha}{\alpha\log(1+\delta)})$. Then 
\begin{equation}
\label{e:ptiltedmixing}
\max_{x,y \in \{0,1\}^d  }\left|\frac{P_{t(\delta)}(x,y)}{\pi_{p}(y)}-1 \right| \le \delta.
\end{equation}
\end{lemma}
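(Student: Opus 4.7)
The plan is to exploit the product structure of the dynamics: the $d$ coordinates of $\eta$ evolve independently, so $P_t$ factorizes. For a single coordinate updated at rate $\mu$, the memoryless property of the exponential update clock yields
\[
P_t^{(i)}(x_i, y_i) = e^{-\mu t}\,\mathbf{1}(x_i = y_i) + (1-e^{-\mu t})\,\pi_p(y_i).
\]
Writing $\beta := e^{-\mu t}$ and multiplying over the $d$ coordinates, I would obtain
\[
\frac{P_t(x,y)}{\pi_p(y)} = \prod_{i=1}^d\left(1 + \beta\, a_i\right), \qquad a_i := \frac{\mathbf{1}(x_i = y_i)}{\pi_p(y_i)} - 1.
\]
Since $\pi_p(y_i) \in \{p, 1-p\}$, each $a_i$ takes values in $\{-1,\, (1-\pi_p(y_i))/\pi_p(y_i)\}$, so $-1 \le a_i \le (1-\alpha)/\alpha$.

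For the upper bound, I would apply $1+u \le e^u$ termwise to get
\[
\prod_{i=1}^d(1+\beta a_i) \le \exp\Bigl(\beta \sum_{i=1}^d a_i\Bigr) \le \exp\bigl(d\beta (1-\alpha)/\alpha\bigr).
\]
Plugging in $t = t(\delta)$ gives $d\beta \cdot (1-\alpha)/\alpha = \log(1+\delta)$, so the product is at most $1+\delta$.

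For the lower bound, I would observe that each factor $1+\beta a_i$ is either $1-\beta$ (when $x_i \ne y_i$) or $1+\beta(1-\pi_p(y_i))/\pi_p(y_i) \ge 1$ (when $x_i = y_i$), so replacing any factor by $1-\beta$ only decreases the product. Hence $\prod_i(1+\beta a_i) \ge (1-\beta)^d$, and by Bernoulli's inequality this is $\ge 1 - d\beta$. Since $\alpha \le 1/2$, we have $\alpha/(1-\alpha) \le 1$, so
\[
d\beta = \frac{\alpha \log(1+\delta)}{1-\alpha} \le \log(1+\delta) \le \delta,
\]
giving $\prod_i(1+\beta a_i) \ge 1 - \delta$.

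Combining the two bounds yields the claim. There is no real obstacle here; the statement is essentially a direct computation, and the specific form of $t(\delta)$ has been engineered so that the exponential upper bound $e^{d\beta(1-\alpha)/\alpha}$ is exactly $1+\delta$, while the asymmetry $\alpha \le 1-\alpha$ ensures that the cruder Bernoulli lower bound is simultaneously sufficient.
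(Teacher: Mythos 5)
Your proof is correct. The core computation is the same as the paper's: factorize $P_t$ over the $d$ independent coordinates, write down the single-coordinate kernel via the memoryless update clock, and apply $1+u\le e^u$ so that the choice of $t(\delta)$ makes the exponential bound exactly $1+\delta$. The one place you diverge is the final step. The paper invokes the reversible-chain identity $\max_{y,z}\bigl|P_t(y,z)/\pi(z)-1\bigr|=\max_y P_t(y,y)/\pi(y)-1$ (its equation \eqref{e:maxdiag}), so it only needs the upper bound on the diagonal entries and never has to argue that $P_t(x,y)/\pi_p(y)$ cannot be much smaller than $1$. You instead bound arbitrary off-diagonal ratios directly, which forces you to supply the separate lower bound $\prod_i(1+\beta a_i)\ge(1-\beta)^d\ge 1-d\beta\ge 1-\delta$; your observation that $\alpha/(1-\alpha)\le 1$ makes this crude bound suffice is correct. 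The trade-off is that your argument is more elementary and self-contained (it does not use reversibility at all), at the cost of an extra two lines; the paper's is slightly slicker but leans on a standard fact about reversible chains that it has already recorded.
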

\begin{proof}
By scaling we can assume $\mu=1$. Denote the transition kernel of a single co-ordinate by $Q_t$. Let $\nu_{p}(1):=p=:1-\nu_{p}(0)$. Then $Q_t(a,a)=e^{-t}+(1-e^{-t})\nu(a) $ for $a \in \{0,1\}$.   Now
\[\frac{P_t(x,x)}{\pi_{p}(x)}=\prod_{i \in [d] }\frac{Q_t(x_{i},x_{i})}{\nu_{p}(x_{i})}  \le \prod_{i \in [d] }\left(1+ \frac{1-\alpha}{\alpha e^{t}} \right) \le \exp \left(d \frac{1-\alpha}{\alpha e^{t}} \right) \]
(using $1+x \le e^x$). The proof is concluded using \eqref{e:maxdiag}  by substituting $t=t(\delta)$ above. 
\end{proof}

\section{Proof of Theorem \ref{thm:1}}
\label{s:proof1}

\begin{definition}
\label{def:EF}
\rm{
Let $A \subset V \times \{0,1\}^E $. For every $a \in V$ we define 
\[
\mathrm{Env}(a,A)=\{\eta \in \{0,1\}^E :(a,\eta) \in A  \}.
\]
For $\alpha\in [0,1]$ we say that $a \in V $ is $(A,\alpha)$-\emph{environmentally friendly} if $\pi_p(\mathrm{Env}(a,A)) \ge \alpha $. We denote the collection of  $(A,\alpha)$-environmentally friendly vertices by   
$A(\alpha)$.
}       
\end{definition}

From now on we fix a set $A\subset V \times \{0,1\}^E$ with $ \pi_{\fu,p} (A) \ge \half $ and set $B=A(\sfrac{1}{4} )$. For every $b \in B$ let $ \mathrm{\widehat {Env}}(b) \subseteq \mathrm{Env}(b,A) $ be some set of environments such that $\pi_p(\mathrm{\widehat {Env}}(b)) \in [\sfrac{1}{4},\sfrac{1}{2}]$. Suppose that $\eta_0\sim \pi_p$. Recall from Definition~\ref{def:aux} that $(\tau_i)$ is the sequence of regeneration times with $\tau_0=0$. Let~$\widehat \tau_i:=\tau_{\inf \{\ell:\sum_{j=1}^{\ell}1_{\{X_{\tau_{j}} \in B \}}=i \}}$ be the $i$-th regeneration time at which the walk co-ordinate is in $B $. We now take a subsequence defined as follows: $\sigma_1=\widehat \tau_1$ and inductively
\[
\sigma_{i+1}=\inf \{ \widehat \tau_j : \widehat \tau_j  \ge \sigma_i + \kappa \}, 
\]
where $\kappa$ is a constant to be determined later. Finally we let 
\[
T=\inf \left\{j:\eta_{\sigma_j} \in \mathrm{\widehat {Env}}( X_{\sigma_j}) \right\}.
\]

\begin{lemma}\label{lem:statindep}
        Let $\eta_0\sim \pi_p$. Then $\eta_{\sigma_1}$ is independent of $X_{\sigma_1}$ and distributed according to $\pi_p$.
\end{lemma}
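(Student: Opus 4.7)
The plan is to deduce the lemma from the stronger statement that at each regeneration time $\tau_i$, the environment $\eta_{\tau_i}$ is independent of the entire auxiliary-chain history $(Y_1,\ldots,Y_i) = (X_{\tau_1},\ldots,X_{\tau_i})$ and distributed according to $\pi_p$. Once this is in hand, the lemma follows immediately: since $\sigma_1 = \tau_J$ for the $(Y_j)$-stopping time $J := \inf\{j \ge 1 : Y_j \in B\}$, for each $b \in B$ and each $F \subseteq \{0,1\}^E$,
\[
\mathbb{P}(X_{\sigma_1}=b,\,\eta_{\sigma_1}\in F) = \sum_{j=1}^{\infty}\mathbb{P}(J=j,\,Y_j=b,\,\eta_{\tau_j}\in F) = \sum_{j=1}^{\infty}\mathbb{P}(J=j,\,Y_j=b)\,\pi_p(F),
\]
where the second equality uses that $\{J=j,\,Y_j=b\}$ is $\sigma(Y_1,\ldots,Y_j)$-measurable together with the strong independence claim. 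Summing gives $\mathbb{P}(X_{\sigma_1}=b,\,\eta_{\sigma_1}\in F) = \mathbb{P}(X_{\sigma_1}=b)\,\pi_p(F)$, which simultaneously yields the distributional and independence assertions.

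I would establish the stronger statement by induction on $i$, mirroring the single-regeneration argument already carried out in the proof of Theorem~\ref{thm:2}. The base case $i=0$ is the assumption $\eta_0 \sim \pi_p$. For the inductive step, condition on the history up to $\tau_{i-1}$ and apply the strong Markov property of $(X_t,\eta_t,R_t)$ at $\tau_{i-1}$ (which is legitimate as this process is Markovian by construction and $\tau_{i-1}$ is one of its stopping times). During the interval $(\tau_{i-1},\tau_i)$, every edge that the walk examines is placed in the infection set $R$ and, by the very definition of $\tau_i$, is removed from $R$ before time $\tau_i$; upon removal it is refreshed to an independent $\mathrm{Ber}(p)$ sample. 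Edges not examined during this interval retain an independent $\mathrm{Ber}(p)$ law, either carried over from $\tau_{i-1}$ (via the inductive hypothesis) or reset by an intermediate rate-$\mu$ update clock. Hence, conditional on the trajectory $(X_t,R_t)_{t\in[\tau_{i-1},\tau_i]}$ and on $(Y_1,\ldots,Y_{i-1})$, the environment $\eta_{\tau_i}$ is a product of fresh independent $\mathrm{Ber}(p)$ variables, so it is $\pi_p$-distributed and independent of $(Y_1,\ldots,Y_i)$.

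The only delicate point is this inductive enhancement: passing from the version stated at an isolated regeneration ($\eta_\tau \sim \pi_p$ independent of $X_\tau$, as used in the proof of Theorem~\ref{thm:2}) to joint independence against the full history $(Y_1,\ldots,Y_i)$. The refresh-on-exit-from-$R$ mechanism built into the auxiliary-chain construction is tailored precisely to give this; once one is careful to condition on the $\sigma$-field generated by the trajectory of $(X_t,R_t)$ on $[\tau_{i-1},\tau_i]$ before reading off the state of $\eta_{\tau_i}$, the induction proceeds without further work.
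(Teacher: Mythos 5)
Your proof is correct and rests on exactly the same mechanism as the paper's: every edge examined before a regeneration time is refreshed upon its removal from the infected set $R$ after its last examination (forced because $R$ empties at regeneration times), while unexamined edges carry an independent $\mathrm{Ber}(p)$ law, so the environment at a regeneration time is $\pi_p$ independently of the walk's trajectory. The paper applies this reasoning in one shot directly to the interval $[0,\sigma_1]$, whereas you package it as an induction over the $\tau_i$ plus a decomposition over the stopping time $J$; this is heavier scaffolding but the same idea.
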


\begin{proof}[\bf Proof]
        If an edge has not been examined by the walk during $[0,\sigma_1]$, then at time $\sigma_1$ it is distributed according to $\mathrm{Ber}(p)$. For the edges that were examined by the walk, considering the last time before time~$\sigma_1$ that this happened we get that at time $\sigma_1$ they are also distributed according to $\mathrm{Ber}(p)$ independently over different edges and of the location of the walk. 
\end{proof}

\begin{lemma}
\label{lem:keyhyper}
If $\kappa$ in the definition of $(\sigma_i)$ is taken sufficiently large, then
 \begin{eqnarray*}
\forall \, i \in \N, \qquad \prcond{T=i}{T \ge i}{}& \ge & \sfrac{1}{8}.
 \end{eqnarray*}
  \end{lemma}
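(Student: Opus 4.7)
The plan is to carry out the inductive $L^2$ argument described in the overview, adapted to the present setting where the ``environmentally friendly'' windows $\mathrm{\widehat{Env}}(X_{\sigma_i})$ are random and coupled to the environment through the walk. Define $\xi_j := \mathbf{1}\{\eta_{\sigma_j} \in \mathrm{\widehat{Env}}(X_{\sigma_j})\}$, so that $T = \inf\{j : \xi_j = 1\}$ and the lemma reduces to showing $\P(\xi_i = 1 \mid \xi_1 = 0,\ldots,\xi_{i-1}=0) \geq 1/8$ for every $i \geq 1$.

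Let $\mathcal{G}_i$ denote the $\sigma$-algebra generated by the full joint trajectory $(X_s,\eta_s,R_s)_{s\leq \sigma_{i-1}}$ together with the walk's path, jump times, the list of examined edges, and the open/closed values seen at each examination on $[\sigma_{i-1},\sigma_i]$; in particular $\mathcal{G}_i$ contains $X_{\sigma_i}$ (so $\mathrm{\widehat{Env}}(X_{\sigma_i})$ is $\mathcal{G}_i$-measurable) and all of $\xi_1,\ldots,\xi_{i-1}$, but not the states of the non-examined edges at time $\sigma_i$. Let $\nu_i$ denote the conditional law of $\eta_{\sigma_i}$ given $\mathcal{G}_i$, further restricted to the event $\{\xi_1=\cdots=\xi_{i-1}=0\}$. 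I would prove by induction on $i$ that $\|\nu_i - \pi_p\|_{2,\pi_p} \leq 1/4$ almost surely on this event. Given this, the lemma follows at once: $X_{\sigma_i}\in B$ gives $\pi_p(\mathrm{\widehat{Env}}(X_{\sigma_i}))\geq 1/4$, and the inequality $\|\mu-\pi\|_{\mathrm{TV}} \leq \tfrac12\|\mu-\pi\|_{2,\pi}$ yields
\[
\P(\xi_i = 1 \mid \mathcal{G}_i, T\geq i) \ \geq\ \pi_p\!\bigl(\mathrm{\widehat{Env}}(X_{\sigma_i})\bigr) - \tfrac12\|\nu_i - \pi_p\|_{2,\pi_p} \ \geq\ \tfrac14 - \tfrac18 = \tfrac18,
\]
and integrating out $\mathcal{G}_i$ gives the claim. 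The base case $i=1$ is Lemma~\ref{lem:statindep}, which gives $\nu_1 = \pi_p$.

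The inductive step splits $\nu_i\mapsto\nu_{i+1}$ into two sub-steps. First, conditioning further on $\xi_i=0$ replaces $\nu_i$ by its restriction $\nu_i'$ to $\mathrm{\widehat{Env}}(X_{\sigma_i})^c$, a set of $\nu_i$-mass at least $3/8$ (since $\pi_p(\mathrm{\widehat{Env}}(X_{\sigma_i}))\leq 1/2$ and $\|\nu_i-\pi_p\|_{\mathrm{TV}}\leq 1/8$). Applying the elementary identity $\|\mu_D-\pi\|_{2,\pi}^2 + 1 \leq (\|\mu-\pi\|_{2,\pi}^2+1)/\mu(D)^2$ from the overview bounds $\|\nu_i' - \pi_p\|_{2,\pi_p}^2$ by an absolute constant $C_0$. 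Second, the environment is an autonomous product of $|E|$ two-state Markov chains each with spectral gap $\mu$, and during $(\sigma_i,\sigma_{i+1}]$ --- an interval of length at least $\kappa$ by construction --- every edge examined by the walk is refreshed since its last examination (the same observation that underlies Lemma~\ref{lem:statindep}). Consequently, given $\mathcal{G}_{i+1}$ the law of $\eta_{\sigma_{i+1}}$ factorises as $\pi_p$ on the examined coordinates and as $\nu_i'$ marginalised to and evolved on the non-examined coordinates. Since projection to a marginal and the Poincar\'e inequality~\eqref{e:Poincare} both only contract the $L^2$-distance, we get
\[
\|\nu_{i+1} - \pi_p\|_{2,\pi_p}^2 \ \leq\ \|\nu_i' - \pi_p\|_{2,\pi_p}^2\, e^{-2\mu\kappa} \ \leq\ C_0\, e^{-2\mu\kappa}.
\]
Choosing $\kappa \geq c\mu^{-1}$ with $c$ large enough to make $C_0 e^{-2\mu\kappa}\leq 1/16$ closes the induction.

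The main obstacle, flagged explicitly in the overview, is that $\mathrm{\widehat{Env}}(X_{\sigma_i})$ depends on the walk and hence on the environment, so one cannot blindly invoke the picture of sampling a stationary environment autonomously at well-separated times. The resolution is exactly the choice of filtration $\mathcal{G}_i$: exposing $X_{\sigma_i}$ makes the conditioning set deterministic, and exposing the walk's examined-edge history factorises the conditional law of $\eta_{\sigma_i}$ into a ``freshly refreshed'' part (trivially distributed as $\pi_p$, by the Lemma~\ref{lem:statindep} argument) and a ``passively evolved'' part to which the autonomous Poincar\'e inequality applies. The one small technicality is verifying that conditioning on the walk's open/closed observations at examined edges during $(\sigma_i,\sigma_{i+1}]$ does not contaminate the evolved marginal on non-examined coordinates; this is immediate from the product structure of the environment chain.
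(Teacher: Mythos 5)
Your proposal follows essentially the same route as the paper's proof: the same induction on $i$ maintaining an $L_2(\pi_p)$ bound on the conditional law of the environment at the times $\sigma_i$, the same split of $\eta_{\sigma_{i}}$ into freshly resampled examined coordinates and Poincar\'e-contracted unexamined ones, the same use of Lemma~\ref{lem:23} to absorb the conditioning on $\{\xi_{i}=0\}$ at a bounded cost, and the same choice of $\kappa$ large to close the induction. The only cosmetic difference is that you condition on a richer filtration (including the open/closed values observed at each examination), whereas the paper conditions only on the sequence of positions $X_{\sigma_j}$ and unexamined-edge sets, which makes the factorisation step \eqref{eq:muii-1} marginally less delicate; otherwise the arguments coincide.
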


Let $Y$ be the auxiliary chain with parameters $\mu=1$ and $p\in (0,1)$. We take its continuous time version, i.e.\ we consider the continuous time chain which stays at every vertex for an exponential time of parameter $1$ and then makes a jump according to $P_{\mathrm{aux}}$. 
We write $(P^{\mathrm{aux, cts}}_t)_{t\in \R_+}$ for its transition semigroup, i.e.\ $P^{\mathrm{aux,cts}}_s=e^{s(P_{\mathrm{aux}}-I)} $,  and define for $\delta>0$
\begin{eqnarray}
\label{e:r}
r(\delta) & = & \inf \left\{s\in \R_+: \max_{x,y \in V } P^{\mathrm{aux,cts}}_{t}(x,y) \le 1-\delta \text{ for all }t \ge s \right\}.
\end{eqnarray}

\begin{lemma}
\label{lem:r}
There exists $\delta_0\in (0,1)$ such that for all $p\in (0,1)$ we have  
\begin{eqnarray}
\label{e:r2}
r(\delta_0) & \leq & \frac{1}{p}.
\end{eqnarray}
\end{lemma}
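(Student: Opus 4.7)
The plan is to prove the bound at the single time $t=1/p$ and then propagate it to all $t\ge 1/p$ via the semigroup identity. The preliminary observation powering the proof is a uniform two-sided control on the exit rates of the continuous-time auxiliary chain: writing $\alpha_x:=1-P_{\mathrm{aux}}(x,x)$, Lemma~\ref{lem:holdingprobforaux} gives $\alpha_x\ge p/2$ always, while its bound $P_{\mathrm{aux}}(x,x)\ge 1-2e^2 p$ (non-vacuous when $p\le 1/(2e^2)$) together with the trivial $\alpha_x\le 1\le 2e^2 p$ for $p>1/(2e^2)$ yields $\alpha_x\le 2e^2 p$ for every $p\in(0,1)$. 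In particular, $\alpha_x/p\in[1/2,2e^2]$ uniformly in $x$ and $p$.

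Fix $x\in V$, let $Z$ denote the continuous-time auxiliary chain started from $x$, and set $t:=1/p$. For $y\ne x$ I would use the elementary inequality $P^{\mathrm{aux,cts}}_t(x,y)\le 1-P^{\mathrm{aux,cts}}_t(x,x)$ together with
\[
P^{\mathrm{aux,cts}}_t(x,x)\ge \mathbb{P}_x(Z_s=x \text{ for all } s\in[0,t])=e^{-\alpha_x t}\ge e^{-2e^2},
\]
which yields $P^{\mathrm{aux,cts}}_t(x,y)\le 1-e^{-2e^2}$. For $y=x$ I would run a \emph{leave-and-stay-away} argument: the first exit time from $x$ is $\tau\sim\mathrm{Exp}(\alpha_x)$, so $\mathbb{P}_x(\tau\le t/2)=1-e^{-\alpha_x t/2}\ge 1-e^{-1/4}$; conditionally on $\tau=s\le t/2$ and on the post-jump destination $Y_1\ne x$, the chain remains at $Y_1$ throughout $(s,t]$ with probability $e^{-\alpha_{Y_1}(t-s)}\ge e^{-\alpha_{Y_1} t}\ge e^{-2e^2}$. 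Integrating $(\tau,Y_1)$ out, $\mathbb{P}_x(Z_t\ne x)\ge (1-e^{-1/4})\,e^{-2e^2}$, and hence $P^{\mathrm{aux,cts}}_t(x,x)\le 1-(1-e^{-1/4})\,e^{-2e^2}$.

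Setting $\delta_0:=(1-e^{-1/4})\,e^{-2e^2}\in(0,1)$, the two bounds above combine to give $\max_{x,y}P^{\mathrm{aux,cts}}_{1/p}(x,y)\le 1-\delta_0$. To extend the estimate to all $t\ge 1/p$, I would apply the semigroup identity $P^{\mathrm{aux,cts}}_t=P^{\mathrm{aux,cts}}_{t-1/p}\,P^{\mathrm{aux,cts}}_{1/p}$ together with stochasticity of $P^{\mathrm{aux,cts}}_{t-1/p}$ to obtain $P^{\mathrm{aux,cts}}_t(x,y)\le\max_z P^{\mathrm{aux,cts}}_{1/p}(z,y)\le 1-\delta_0$, so that $r(\delta_0)\le 1/p$ as required. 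The only non-routine step is identifying the universal window $\alpha_x/p\in[1/2,2e^2]$ from Lemma~\ref{lem:holdingprobforaux}; once this is in hand, all remaining ingredients are elementary continuous-time Markov chain estimates.
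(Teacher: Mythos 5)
Your proof is correct and follows essentially the same route as the paper: both arguments use the holding-probability bounds of Lemma~\ref{lem:holdingprobforaux} to show that at time $1/p$ the chain stays at $x$ with probability bounded below (controlling $P^{\mathrm{aux,cts}}_{1/p}(x,y)$ for $y\neq x$) and leaves $x$ without returning with probability bounded below (controlling $P^{\mathrm{aux,cts}}_{1/p}(x,x)$), then extend to all $t\ge 1/p$ by monotonicity. The only cosmetic difference is that you condition on the first jump occurring before $t/2$ and integrate directly, where the paper introduces an independent auxiliary exponential $T_2$ of rate $\max_y(1-P_{\mathrm{aux}}(y,y))$; both are valid.
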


\begin{proof}[\bf Proof]

We write $J_t=P^{\mathrm{aux,cts}}_t  $. We first note that $\max_{x,y \in V } J_{t}(x,y) $ is non-increasing in $t$ and so $r(\delta)=\inf \{s \in \R_+ : \max_{x,y \in V } J_{s}(x,y)\le 1-\delta  \} $. Now, let $s=1/p $ and let $T_1$ be the first time the continuous time auxiliary chain jumps out of $x$. Then $T_1$ is exponential of parameter $1-P_{\mathrm{aux}}(x,x)\asymp p$ by Lemma~\ref{lem:holdingprobforaux}. We then get 
\[
J_s(x,x) \geq \pr{T_1>s} = e^{-s(1-P_{\mathrm{aux}}(x,x))} \geq  c_1.
\]
Let $T_2$ be an exponential random variable independent of $T_1$ of parameter $\max_y(1-P_{\mathrm{aux}}(y,y))\asymp p$. We then get 
\[
1-J_{s}(x,x) \geq \prstart{T_1<s, T_2>s}{x} \geq c_2
\]
for a positive constant $c_2$. Noting that $ J_{s}(x,y) \le1- J_{s}(x,x)   $ for $x \neq y$ concludes the proof.   
\end{proof}

Below we write $\Lambda_{{\rm{full}},(\mu,p)}$, $\Lambda_{{\rm{aux}}, (\mu,p)}$ and $\Lambda_{\SRW}$ for the spectral profile of the full chain, the auxiliary chain with parameters $(\mu,p)$ and the simple random walk respectively. Note that $\Lambda_{{\rm{aux}}, (\mu,p)}$ is defined with respect to the generator $P_{{\rm{aux}}, (\mu,p)}-I$.
We write $\cL_{\mu,p}$ for the generator of the full process with parameters $(\mu,p)$. Finally, for $A \subset V \times \{0,1 \}^{E} $ we write  $\lambda_{(\mu,p)}(A)$ for the smallest eigenvalue of the restriction of $-\cL_{\mu,p}$ to $A$, as in the paragraph preceding~\eqref{e:laexit}.

\begin{proposition}
\label{prop:LDaux}
There exist positive constants $M$ and $L$ so that the following holds. 
Let $A \subset V \times \{0,1\} $ be such that $\pi_{\fu,p}(A^{c}) \le 1/2$, let $B=A(\sfrac{1}{4}) \subseteq V $ be as above, let $\delta_0$ be as in Lemma~\ref{lem:r} and $(Y_{s})_{s\geq 0}$ be the continuous time chain with generator $P_{{\rm{aux}}, (1,p)}-I$ with $p\in (0,1)$. There exists a sequence of stopping times $T_1< T_2< \cdots $ (w.r.t.\ the chain $(Y_s)_{s\geq 0}$) such that for all $i$ we have $Y_{T_i}\in B$ and  $T_i/\left\lceil \frac{M}{\Lambda_{\mathrm{aux},(1,p)}(M \pi_{\fu,p}(A^{c})  )}+r(\delta_0)    \right\rceil $ is stochastically dominated by the law of $\sum_{j=1}^i Z_j $, where $Z_1,Z_2,\ldots$ are i.i.d.\ Geometric random variables with mean at most~$L$, where~$r(\delta_0)$ is as in~\eqref{e:r}. 
\end{proposition}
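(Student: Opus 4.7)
The plan is to reduce the construction of $(T_i)$ to the following uniform lower bound: for any $y \in V$, the chain $Y$ started at $y$ visits $B$ during $[0,s]$ with probability at least $1/L$, where $s := \lceil M/\Lambda_{\mathrm{aux},(1,p)}(M\beta) + r(\delta_0) \rceil$ and $\beta := \pi_{\fu,p}(A^c)$. First I would bound $\pi(B)$ from below: if $v \in B^c$ then $\pi_p(\mathrm{Env}(v,A)) < 1/4$ by definition of $B$, so $\pi_p(\mathrm{Env}(v,A^c)) > 3/4$ and hence $\beta = \sum_v \pi(v) \pi_p(\mathrm{Env}(v,A^c)) \geq (3/4) \pi(B^c)$. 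Combined with $\beta \leq 1/2$, this yields $\pi(B) \geq 1 - (4/3)\beta \geq 1/3$, so $\pi(B^c)/\pi(B) \leq 4\beta$. By Proposition~\ref{prop: Lagrange} it then suffices to establish the $L_2$-bound $\|P_s^{\mathrm{aux,cts}}(y,\cdot) - \pi\|_{2,\pi}^2 \leq 1/(16\beta)$ uniformly in $y$: this gives $\pr{Y_s \in B \mid Y_0 = y} \geq \pi(B)/2 \geq 1/6$, so $L = 6$.

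I would establish this $L_2$-bound by a two-stage use of the spectral-profile machinery of Proposition~\ref{p:spectral3}. The core idea is to split the integral in \eqref{e:spb2} at $\delta \asymp M\beta$: for $\delta \leq M\beta$ monotonicity of $\Lambda_{\mathrm{aux}}$ gives $\Lambda_{\mathrm{aux}}(\delta) \geq \Lambda_{\mathrm{aux}}(M\beta)$, so this part of the integral contributes $O(\log M)/\Lambda_{\mathrm{aux}}(M\beta)$, which is absorbed by $M/\Lambda_{\mathrm{aux}}(M\beta)$ once $M$ is chosen large. The contribution from very small $\delta$ (arising when $\pi(y)$ is tiny) is absorbed by $r(\delta_0)$: Lemma~\ref{lem:r} gives $\max_z P_{r(\delta_0)}^{\mathrm{aux,cts}}(y,z) \leq 1 - \delta_0$, and combining this $L_\infty$ control with the spectral-profile decay over $[0, r(\delta_0)]$ (using the comparison $\Lambda_{\mathrm{aux}}(\cdot) \geq (p/2)\Lambda_{\SRW}(\cdot)$ from Lemma~\ref{lem:PauxPsrw} and $r(\delta_0) \leq 1/p$) should yield $V(r(\delta_0)) := \|P_{r(\delta_0)}^{\mathrm{aux,cts}}(y,\cdot) - \pi\|_{2,\pi}^2 \lesssim 1/\beta$. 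Then only the range $\delta \in [\Theta(\beta), 64\beta]$ remains to be covered, which is handled by a single application of \eqref{e:spb4}.

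Once the uniform bound $\pr{Y_s \in B \mid Y_0 = y} \geq 1/L$ is available, I would construct $(T_i)$ greedily. Let $I_k := [(k-1)s, ks]$ be the $k$-th deterministic attempt interval and $\sigma_k := \inf\{t \in I_k : Y_t \in B\}$, with $\sigma_k := \infty$ if no such $t$ exists; call $I_k$ a \emph{success} if $\sigma_k < \infty$. By the strong Markov property at time $(k-1)s$ together with the uniform bound,
\[
\pr{\sigma_k < \infty \;\middle\vert\; \mathcal{F}_{(k-1)s}} \geq 1/L \qquad \text{a.s.}
\]
Letting $k_i$ denote the index of the $i$-th successful interval and setting $T_i := \sigma_{k_i}$, we obtain stopping times with $Y_{T_i} \in B$; and $k_i$ is stochastically dominated by a sum of $i$ i.i.d.\ Geometric$(1/L)$ random variables $Z_j$ with mean $L$. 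Since $T_i \leq k_i \cdot s$, this gives the required domination.

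I expect the main obstacle to be the upgrade of Lemma~\ref{lem:r}'s $L_\infty$-type estimate to the $L_2$-estimate $V(r(\delta_0)) \lesssim 1/\beta$. The bound $\max_z \nu_{r(\delta_0)}(z) \leq 1 - \delta_0$ does not by itself control $\sum_z \nu_{r(\delta_0)}(z)^2/\pi(z)$ when $\min_z \pi(z)$ is very small. I would circumvent this by applying \eqref{e:spb2} over the interval $[0, r(\delta_0)]$ itself, leveraging $\Lambda_{\mathrm{aux}} \geq (p/2)\Lambda_{\SRW}$ and $r(\delta_0) \leq 1/p$ so that the amount of $L_2$-decay achieved during $[0,r(\delta_0)]$ is governed by a universal integral involving $\Lambda_{\SRW}$; the $L_\infty$ bound then feeds back into the analysis by bounding the support-profile contribution of $\nu_{r(\delta_0)}$ via \eqref{e:la0}--\eqref{e:lala0}. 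The constants $M$ and $L$ must be calibrated so that the two phases match up cleanly and the total allocated time $s$ suffices for both simultaneously.
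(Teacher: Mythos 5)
There is a genuine gap, and it sits exactly where you suspected. Your whole construction reduces to the claim that for \emph{every} starting vertex $y$ one has $\|P^{\mathrm{aux,cts}}_{s}(y,\cdot)-\pi\|_{2,\pi}^{2}\le 1/(16\beta)$ with $s=\lceil M/\Lambda_{\mathrm{aux},(1,p)}(M\beta)+r(\delta_0)\rceil$. This cannot be true in general: starting from a point mass, $\|\delta_y-\pi\|_{2,\pi}^{2}=1/\pi(y)-1$ can be as large as $1/\pi_*$, and by \eqref{e:spb2} bringing this down to $O(1/\beta)$ requires time of order $\int_{4\pi_*}^{c/\beta}\frac{d\delta}{\delta\Lambda_{\mathrm{aux}}(\delta)}$, i.e.\ essentially the full $L_2$ mixing time of the auxiliary chain, which via Lemma~\ref{lem:PauxPsrw} is of order $p^{-1}t_{\mathrm{spectral-profile}}^{\SRW}$. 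The term $M/\Lambda_{\mathrm{aux}}(M\beta)$ only covers the portion of that integral with $\delta\gtrsim\beta$, and the term $r(\delta_0)\le 1/p$ covers only an $O(1)$ sliver of the SRW spectral-profile integral after the comparison $\Lambda_{\mathrm{aux}}\gtrsim p\,\Lambda_{\SRW}$ — nowhere near the range $[4\pi_*,\Theta(\beta)]$. Your proposed fix (running \eqref{e:spb2} over $[0,r(\delta_0)]$ and feeding the $L_\infty$ bound back through \eqref{e:la0}--\eqref{e:lala0}) does not close this: the $L_\infty$ bound $\max_z P_{r}(y,z)\le 1-\delta_0$ gives no control on $\sum_z P_r(y,z)^2/\pi(z)$ when $\pi_*$ is small, and no amount of recombination of these two ingredients produces the missing $L_2$ decay in time $O(1/p)$. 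The restart via the strong Markov property at the deterministic times $(k-1)s$ is precisely what forces this worst-case-initial-point requirement, and it is the step that fails.

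The paper's proof avoids this by never restarting from a point. It starts from $Y_0\sim\pi$ and runs an $L_2$ induction on the \emph{conditional} law of $Y_{t_i}$ given the outcomes of the previous indicator variables: conditioning on an event of probability at least $\alpha$ inflates $\|\cdot-\pi\|_{2,\pi}^{2}$ by at most a factor $\alpha^{-2}$ (Lemma~\ref{lem:23}), and a single time step of length $t_1\asymp\log(\alpha^{-2\kappa})/\Lambda_{\mathrm{aux}}(\cdot)\vee r(\delta_0)$ then restores the bound via \eqref{e:spb4}; the role of $r(\delta_0)$ is only to guarantee, via the $L_\infty$ estimate of Lemma~\ref{lem:r}, that the conditional measures are never so concentrated on a single atom that one cannot carve out a set $D_i\supseteq B^c$ with conditional probability in $[\alpha,1-\alpha]$. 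The sets $D_i$ are random (measurable in the history of the indicators), and $T_i$ is the $i$-th time $t_j$ with $Y_{t_j}\notin D_j\subseteq B^c{}^{\,c}$. Your peripheral computations (the bound $\pi(B^c)\le\tfrac43\beta$, the splitting of the spectral-profile integral at $\delta\asymp M\beta$, and the geometric-domination bookkeeping at the end) are all fine, but the core of the argument needs to be replaced by this conditional induction.
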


We defer the proofs of Lemma~\ref{lem:keyhyper} and Proposition~\ref{prop:LDaux} until after the proof of Theorem~\ref{thm:1}.
 
\begin{proof}[\bf Proof of Theorem \ref{thm:1}]

To simplify notation we write $r=r(\delta_0)$.

The proof is mostly a formal exercise involving translating the assertion of Proposition \ref{prop:LDaux} concerning the  rate of exponential decay of the tail of $T_i/\left(i\left\lceil \frac{M}{\Lambda_{\mathrm{aux},(1,p)}(M \pi_{\fu,p}(A^{c})  )}+r    \right\rceil \right) $ into one about  the rate of exponential decay of the tail of $\sigma_i/\left(i\left\lceil \frac{M}{\Lambda_{\mathrm{aux},(1,p)}(M \pi_{\fu,p}(A^{c})  )}+r    \right\rceil \right) $. This is straightforward in light of the fact that the spacings between the regeneration times are i.i.d.\ with an exponentially decaying tail, and that each such spacing is at least $\kappa$ w.p.\ at least $e^{-\kappa}$.  We now give the formal details.

Let $t=\frac{1}{2\mu} \log \left(|E|(1-\alpha)/(\alpha \log 2)\right)$, where $\alpha=p\wedge (1-p)$. Then it is not hard to see that 
\begin{align}\label{eq:normbound}
\max_{x,\eta} \|\P_{(x,\eta)}^t- \pi_{\fu,p} \|_{2,\pi_{\fu,p}}^2 \le \frac{2}{\min_{v \in V}\pi(v)}. 
\end{align}
Indeed, let $P_t=P_t^{(\mu,p)}$ and $Q_t=Q_t^{(\mu,p)}$ be the transition kernels for time $t$ of the full process and of the environment, respectively. The latter is simply a continuous-time SRW on the $p$-tilted hypercube, and its $1$-$L_2$ mixing time (i.e.\ $t_{\mathrm{mix}}^{(2)}(\varepsilon)$ with $\varepsilon=1$) is at most $t$ by Lemma~\ref{lem:ptiltedhypermixing}. 
 Thus 
 $\sfrac{P_{2t}((x,\eta), (x,\eta))}{\pi_{\fu,p}(x,\eta)} \le   \sfrac{Q_{2t}(\eta, \eta)}{\pi_{p}(\eta) \min_{v \in V}\pi(v)} \le 2/\min_{v \in V}\pi(v)$ which shows~\eqref{eq:normbound}.

Using~\eqref{eq:normbound} and~\eqref{e:spb2} we get
\begin{equation}
\label{e:burnin}
\mixfmupi (\eps^{2}) \le  2t+2 \int_{2\min_{v \in V}\pi(v)}^{4/\eps} \frac{d \delta}{\delta \Lambda_{\fu,(\mu,p)}(\delta) }. 
\end{equation}

Let $\mu_1<\mu_2$. Then $\cL_{\mu_2,p}((x,\eta),(y,\eta')) \le \frac{\mu_2}{\mu_{1}}\cL_{\mu_1,p}((x,\eta),(y,\eta'))$ for all $(x,\eta),(y,\eta') \in V \times \{0,1\}^E $. Using also~\eqref{e:laAextremalchar}, \eqref{e:la} and noting that both $\cL_{\mu_1,p} $ and $\cL_{\mu_2,p}$ are reversible w.r.t.\ $ \pi_{\fu,p}$ we obtain
\begin{equation}
 \label{e:lambdaA123}
 \Lambda_{\fu,(\mu_{2},p)}(\delta) \; \le \; \frac{\mu_2}{\mu_{1}} \Lambda_{\fu,(\mu_1,p)}(\delta).
 \end{equation}

By combining~\eqref{e:burnin} and~\eqref{e:lambdaA123} together with \eqref{e:lala0} and \eqref{e:laAla0}  we see that in order to conclude the proof of Theorem \ref{thm:1} it suffices to consider $\mu=1$ and prove that for a positive constant $M$ we have that for all $A \subset V \times \{0,1\}^E $ with $\pi_{\fu,p}(A^c) \le 1/2 $ 
\begin{equation}
\label{e:wtp}
\frac{p }{M\lambda_{(1,p)}(A^{c})} \; \le \; \frac{1 }{\Lambda_{\SRW}(M\pi_{\fu,p}(A^{c})  )}
\end{equation}
(the l.h.s.\ is defined w.r.t.\ the full process).
 We write $\Lambda^{\mathrm{aux},(\mu,p)}$ for the spectral profile of the auxiliary chain with parameters $\mu$ and~$p$.

By Lemma~\ref{lem:PauxPsrw} we have that for all $\delta>0$
\[ \frac{p}{\Lambda^{\mathrm{aux},(1,p)}( \delta )}\; \le \; \frac{2}{\Lambda_{\SRW}( \delta )}. \]
 Hence to conclude the proof of Theorem \ref{thm:1} it suffices to show that for $A$ as above 
\begin{equation}
\label{e:wtp2}
\frac{1 }{M\lambda_{(1,p)}(A^{c})} \; \le \; \frac{1 }{\Lambda^{\mathrm{aux},(1,p)}(M\pi_{\fu,p}(A^{c})  )}.
\end{equation}

Throughout the remainder of the section we fix $\mu=1$. Our strategy for proving \eqref{e:wtp2} is to find a set $B \subseteq V $ with $\pi(B^{c}) \lesssim \pi_{\fu,p}(A^{c})  $ and such that the asymptotic rate of decay of the tail of $T_A$ can be controlled via the time spent in $B$ by the auxiliary chain. Roughly speaking, we want to have a set $B$ such that every visit of the auxiliary chain to $B$ is a visit of the full process to $A$ with some probability bounded away from $0$. Here we are using the fact that the auxiliary chain can be coupled with the full process by viewing it along regeneration times.

We next claim that it suffices to prove that for $M$ and $r$ as in Proposition \ref{prop:LDaux} there exists a positive constant $c$ such that 
\begin{align}\label{eq:goalinproof}
        \E_{\pi_{\fu,p}}[e^{32\alpha \sigma_i }   ]  \le  e^{i} \quad \text{ for some }\, \alpha \,\text{ s.t. } 
        \frac{c}{\alpha} \leq \frac{1}{\Lambda_{\mathrm{aux},(1,p)}(M \pi_{\fu,p}(A^{c})  )}+r.
\end{align}
Indeed, for $(\sigma_i)$ and $T$ as defined earlier
\[
\prstart{T_{A}>t}{\pi_{\fu,p}} \le \prstart{\sigma_{\lceil 16 \alpha t \rceil} >t}{\pi_{\fu,p}}+\prstart{T>\lceil 16 \alpha t \rceil}{\pi_{\fu,p}}.
\]
Now by Lemma~\ref{lem:keyhyper} we have $\prstart{T>\lceil 16 \alpha t \rceil}{\pi_{\fu,p}}\le (7/8)^{\lceil 16 \alpha t \rceil} \le e^{-2\alpha t } $ and $\prstart{\sigma_{\lceil 16 \alpha t \rceil} >t}{\pi_{\fu,p}}\le \E_{\pi_{\fu,p}}[e^{32\alpha \sigma_{\lceil 16 \alpha t \rceil}  }]e^{-32\alpha t} $, which by our assumption is at most $e^{-32\alpha t}e^{\lceil 16 \alpha t \rceil}\le e^{1-16\alpha t}$. Therefore, this would then imply that $\estart{e^{\alpha T_A}}{{\pi_{\fu,p}}}<\infty$, and hence $\lambda_{(1,p)}(A^c)\geq \alpha$. 

So we now turn to prove~\eqref{eq:goalinproof}. Recall that $\tau_i$ is the $i$-th regeneration time as in Definition~\ref{def:aux}. 
We now consider the discrete time auxiliary chain $Y$. Let $\til{Y}$ be the continuous time version of $Y$, i.e.\ we let $N$ be an independent Poisson process of rate $1$ and set $\til{Y}_t=Y_{N(t)}$.  We set 
        \[
        S=8\left\lceil \frac{M}{\Lambda_{\mathrm{aux},(1,p)}(M \pi_{\fu,p}(A^{c})  )}+r    \right\rceil.  
        \]
        Hence, it suffices to show that by setting $L$ to be sufficiently large there exists a positive constant~$c$ such that for all $x\geq 1$ and all $i$
        \[ 
        \pr{\sigma_i > xL^{3}  Si} \leq c  e^{-10xi}. 
        \]
        Let $(T_i)$ be the stopping times from Proposition~\ref{prop:LDaux}. Recall that $N(t) \sim \mathrm{Poisson}(t)$ is the number of jumps the continuous-time version of the auxiliary chain makes by time $t$. Hence $N(T_i)$ is the number of jumps it makes by the stopping time $T_i$. We have generated the full process, the auxiliary chain $(Y_j)$, and its continuous-time version $\til{Y}_t:=Y_{N(t)}$ on the same probability space (the auxiliary chain is generated by viewing the walk co-ordinate of the full process at regenaration times, and the continuous-time version of the auxiliary chain is generated from the auxiliary chain by using an independent rate one Poisson process $(N(t):t \ge 0)$). Hence we can consider $\rho(i):=\tau_{N(T_i)}$, which is the time at which the $N(T_i)$-th regeneration time of the full process occurs. The walk co-ordinate of the full process at time $\rho(i)$ is $\til{Y}_{T_i}$.   
        For all $i$ we set $Z_i$ to be the time between $\rho(i)$ and the first time after $\rho(i)$ that the walk $X$ examines an edge. (Note $Z_i$ is an exponential variable of parameter $1$.) We also define 
        \[
        \xi_j = \1(N(T_j)-N(T_{j-1})\geq 1, Z_j \geq \kappa) \quad \text{ and } \quad  J_\ell = \sum_{j=1}^{\ell} \xi_j.
        \]
        Note that the variables $(\xi_i)$ are i.i.d.\ and since $T_j-T_{j-1}\geq 1$ for all $j$ and the two events appearing in the definition of $\xi_j$ are independent, we get that $\pr{\xi_j=1}\geq (1-e^{-1})e^{-\kappa}$.
  Using the definitions above we then have the following inclusions for all $x$ and $i$
        \begin{align*}
                \{\sigma_i\geq x L^3 Si\} &\subseteq \{ \rho(\lceil xLi/4\rceil)\geq x L^3 S i\}\cup\{J_{\lceil xLi/4\rceil}< i\} \\
                \{ \rho(\lceil xLi/4\rceil)\geq x L^3 S i\}&\subseteq \{N(T_{\lfloor xLi/4\rfloor})\geq xL^2Si\} \cup \{\tau_{\lceil xL^2Si\rceil} \geq x L^3Si\}\\
                 \{N(T_{\lfloor xLi/4\rfloor})\geq xL^2Si\}&\subseteq \{        T_{\lceil xLi/4\rceil} \geq xL^2Si/100\} \cup \{ N(xL^2Si/100)\geq xL^2Si\}.
        \end{align*}
(For instance, the first inclusion follows by noting that if $J(j) \ge \ell$ and $\rho(j) \le t$ then we must have that $\sigma_{\ell} \le t$. Indeed, if $J(j) \ge \ell$ then among $T_{1},\ldots,T_{j}$ there are at least $\ell$ that contribute $+1$ towards increasing the index of $\sigma$ - i.e. towards $\inf\{k:\sigma_k \ge \rho(j) \} $. If moreover $\rho(j) \le t$, then $T_{j}$ is generated in the full process at some time which is smaller or equal to $t$.)

        The proof is now concluded by taking $L$ sufficiently large and using Proposition~\ref{prop:LDaux} for the tails of the stopping times~$T_i$, the fact that $J$ is the sum of i.i.d.\ indicators with probability bounded away from $0$, large deviations for Poisson random variables and Lemma~\ref{lem:prelimforproof} for the tails of $(\tau_i-\tau_{i-1})$.
        
        Using that $t_{\rm{spectral-profile}}^{\rm{SRW}}(\epsilon) \gtrsim (\log|V|/\epsilon)$ for all $\epsilon\in (0,1)$ shows that we can absorb the logarithmic terms (one of which is coming from the term $\int_{2\min_{v \in V}\pi(v)}^{4/\epsilon}\frac{r}{\delta}d \delta$, using $\min_{v}\pi(v) \ge |V|^{-2}$, while the other one from \eqref{e:burnin}) and this completes the proof.
\end{proof}
 
\begin{remark}
\label{r:TV}
        \rm{
        We now explain how to get rid of the $\log $ term in the statement of Theorem~\ref{thm:1} when considering total variation mixing. Let $t=\frac{2\log n}{\mu}$ and $A$ be the event that all edges of $G$ have been updated by time $t$. Define $\nu_1=\mathbb{P}^{t}_{(x,\eta)}(\cdot|A)$ and $\nu_2=\mathbb{P}^{t}_{(x,\eta)}(\cdot|A^c)$.  Then we have
                \[
        \mathbb{P}^{t+s}_{(x,\eta)} = \pr{A} \mathbb{P}^s_{\nu_1} +\pr{A^c}\mathbb{P}^s_{\nu_2},
        \]
        and hence, by convexity and Jensen's inequality we obtain
        \begin{align*}
                \tv{\mathbb{P}^{t+s}_{(x,\eta)} - \pi_{\mathrm{full},p}} \leq \pr{A^c} + \|\mathbb{P}^s_{\nu_1}-\pi_{\mathrm{full},p}\|_{2,\pi_{\mathrm{full},p}}.
        \end{align*}
        Similarly to~\eqref{eq:normbound} we have that 
        \[
        \|{\nu_1}-\pi_{\mathrm{full},p}\|_{2,\pi_{\mathrm{full},p}}^2 \leq \frac{1}{\min_v \pi(v)}.
        \]
        The rest of the proof is identical to the proof of Theorem~\ref{thm:1}.}
\end{remark} 
 
%
%
%
\begin{lemma}
\label{lem:23}
Let $\pi$ be a distribution of full support on a finite set $\Omega$. Let  $J$ be an $\Omega$-valued random variable and $A$ an event. Then
\begin{equation}
\label{e:hatnuL22}
\|\P( J \in \cdot \mid A )- \pi \|_{2,\pi}^2 \; \le \; \frac{\| \P( J \in \cdot  )- \pi \|_{2,\pi}^2 +1 }{\P(A)^2}-1.
\end{equation}
In particular, if $\nu$ is some distribution on $\Omega$ and $\widehat \nu$ is $\nu$ conditioned on $A \subseteq \Omega $, then
\begin{equation}
\label{e:hatnuL2}
\|\widehat \nu - \pi \|_{2,\pi}^2 \; \le \; \frac{\| \nu - \pi \|_{2,\pi}^2 +1 }{\nu(A)^2}-1.
\end{equation}
\end{lemma}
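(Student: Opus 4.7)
The plan is to reduce everything to the algebraic identity
\[
\|\rho - \pi\|_{2,\pi}^2 + 1 \;=\; \sum_{x \in \Omega} \frac{\rho(x)^2}{\pi(x)},
\]
which holds for any probability distribution $\rho$ on $\Omega$ (expand the square and use $\sum_x \rho(x) = \sum_x \pi(x) = 1$). Once both sides of the claimed inequality are written in this $\chi^2$-type form, the inequality becomes a pointwise comparison.

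For the general statement, let $\mu(x) = \P(J = x)$ and $\mu_A(x) = \P(J=x \mid A) = \P(J=x, A)/\P(A)$. Using the identity above on both $\mu$ and $\mu_A$, the inequality \eqref{e:hatnuL22} reduces to showing
\[
\sum_{x \in \Omega} \frac{\mu_A(x)^2}{\pi(x)} \;\le\; \frac{1}{\P(A)^2} \sum_{x \in \Omega} \frac{\mu(x)^2}{\pi(x)},
\]
which after multiplying through by $\P(A)^2$ becomes $\sum_x \P(J=x,A)^2/\pi(x) \le \sum_x \P(J=x)^2/\pi(x)$. This holds termwise because $\P(J=x, A) \le \P(J=x)$ and $\pi(x) > 0$.

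For the particular case \eqref{e:hatnuL2}, take $J$ distributed according to $\nu$ and let $A$ be the event $\{J \in A\}$, so $\widehat \nu$ is exactly the conditional law. Alternatively, one can argue directly: since $\widehat \nu(x) = \nu(x)\1_{x \in A}/\nu(A)$, the identity gives
\[
\|\widehat \nu - \pi\|_{2,\pi}^2 + 1 \;=\; \frac{1}{\nu(A)^2}\sum_{x \in A} \frac{\nu(x)^2}{\pi(x)} \;\le\; \frac{1}{\nu(A)^2}\sum_{x \in \Omega} \frac{\nu(x)^2}{\pi(x)} \;=\; \frac{\|\nu-\pi\|_{2,\pi}^2 + 1}{\nu(A)^2}.
\]
There is no real obstacle here; the only thing to be careful about is bookkeeping the $+1$ term correctly when converting between $\|\cdot - \pi\|_{2,\pi}^2$ and $\sum_x \rho(x)^2/\pi(x)$.
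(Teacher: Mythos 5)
Your proof is correct and is essentially identical to the paper's: both rest on the identity $\|\rho-\pi\|_{2,\pi}^2+1=\sum_x \rho(x)^2/\pi(x)$ applied to the conditional and unconditional laws, followed by the termwise bound $\P(J=x,A)\le\P(J=x)$. Nothing further is needed.
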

\begin{proof}[\bf Proof]
We have that $\|  \P( J \in \cdot  )- \pi \|_{2,\pi}^2  +1= \sum_{x  } \pi(x)\left(\sfrac{  \P( J =x  )}{\pi(x)}  \right)^2\ge \sum_{x } \pi(x)\left(\sfrac{ \P(\{ J =x \} \cap A )}{\pi(x)}  \right)^2 $. By the same reasoning  $\|\P(J \in \cdot \mid A ) - \pi \|_{2,\pi}^2  +1=   \frac{1}{\P(A)^2}\sum_{x \in A } \pi(x)\left(\sfrac{ \P(\{ J =x \} \cap A )}{\pi(x)}  \right)^2 $.   
\end{proof}

\begin{proof}[\bf Proof of Lemma~\ref{lem:keyhyper}]
We need to show that 
\[
\prcond{\eta_{\sigma_i}\in \mathrm{\widehat {Env}}( X_{\sigma_i})}{T\geq i}{} \geq \frac{1}{8}.
\]
Let $J_i$ be the set of edges examined by the walk during the time interval $[\sigma_{i-1},\sigma_i]$ and let $E_i=E\setminus J_i$. 
Crucially, given $(J_i,E_i)$, $\eta_{\sigma_{i-1}}$ and the whole history $\sigma(X_t:t\leq \sigma_i)$ we have that the law of $\eta_{\sigma_i} $ can be described as follows:
\begin{itemize}
\item[(1)] The different co-ordinates of  $\eta_{\sigma_i} $ are independent;
\item[(2)] For $e \in J_i $ we have that $\eta_{\sigma_i}(e) \sim \mathrm{Bernoulli}(p) $;
\item[(3)] For $e \in E_i $ we have that $\eta_{\sigma_i}(e) \sim \mathrm{Bernoulli}(p) $ with probability $1-e^{-(\sigma_i-\sigma_{i-1})} $ and otherwise $\eta_{\sigma_i}(e) =\eta_{\sigma_{i-1}}(e)$. In other words, the restriction of the environment to $E_i$ evolves during $[\sigma_{i-1},\sigma_i]$ by updating each edge at rate $1$ to be either open w.p.\ $p$ or closed w.p.\ $1-p$.
\end{itemize} 
For any sequence $(x_i)$ with $x_i\in B$ for all $i$ and all sequences $(S_i)$ of subsets of $E$, define $A_i=\{X_{\sigma_1}=x_1,\ldots, X_{\sigma_i}=x_i, E_1=S_1,\ldots, E_i=S_i\}$. For every subset of edges $S$ we denote by $\pi_p^S$ the Bernoulli$(p)$ product measure on $S$. 
By properties (1) and (2) above we then have
\begin{align*}\norm{\prcond{\eta_{\sigma_i}\in \cdot}{T\geq i, A_i}{} - \pi_p}_{2,\pi_p} = \norm{\prcond{\eta_{\sigma_i}\vert_{S_i}\in \cdot}{T\geq i, A_i}{} - \pi_p^{S_i}}_{2,\pi_p^{S_i}},
\end{align*}
where $\eta\vert_{S}$ denotes the restriction of $\eta$ on the set of edges $S$. 
For every $\eta \in \{0,1\}^{S_i}$ we now let 
\begin{align*}
        \nu_i^{\eta}(\cdot)= \prcond{\sigma_{i}-\sigma_{i-1}\in \cdot}{\eta_{\sigma_{i-1}}\vert_{S_i}=\eta, T\geq i, A_i}{}.
\end{align*}
We note that under the conditioning above, the distribution of $\sigma_i-\sigma_{i-1}$ does not depend on $\{\eta_{\sigma_{i-1}}\vert_{S_i}=\eta\}$, since $S_i$ is the set of edges that the walk does not examine during $[\sigma_{i-1},\sigma_i]$. Therefore, we get for all $\eta$
\begin{align*}
\nu_i^{\eta}(\cdot) = \prcond{\sigma_{i}-\sigma_{i-1}\in \cdot}{T\geq i, A_i}{} =:\nu_i(\cdot).    
\end{align*}
Using again that $S_i$ is the set of edges that the walk does not examine during $[\sigma_{i-1},\sigma_i]$ we get
\begin{align}\label{eq:muii-1}
\mu_i(\eta) :=  \prcond{\eta_{\sigma_{i-1}}\vert_{S_i}=\eta}{T\geq i, A_i}{} = \prcond{\eta_{\sigma_{i-1}}\vert_{S_i}=\eta}{T\geq i, A_{i-1}}{}.
\end{align}
Conditional on $\{T\geq i\}\cap A_i$, the distribution of $\eta_{\sigma_{i}}\vert_{S_i}$ is that of a $p$-tilted random walk on the hypercube $
\{0,1\}^{S_i}$ started from $\eta_{\sigma_{i-1}}\vert_{S_i}$ and run for time $\sigma_i-\sigma_{i-1}$. Let $\til{\eta}$ be a continuous time $p$-tilted random walk on $
\{0,1\}^{S_i}$. 
Hence, putting all things together (and recalling that $\sigma_i-\sigma_{i-1} \ge \kappa$ by construction) we obtain
\begin{align}\label{eq:l2distance}
\begin{split}
&\norm{\prcond{\eta_{\sigma_i}\vert_{S_i}\in \cdot}{T\geq i, A_i}{}- \pi_p^{S_i}}^2_{2,\pi_p^{S_i}} \\
&= \sum_{\xi\in \{0,1\}^{S_i}}\pi_p^{S_i}(\xi) \left(\frac{\prcond{\eta_{\sigma_i}\vert_{S_i}=\xi}{T\geq i, A_i}{}}{\pi_p^{S_i}(\xi)} -1 \right)^2 \\
         &=\sum_{\xi\in \{0,1\}^{S_i}}\pi_p^{S_i}(\xi) \left(\frac{1}{\pi_p^{S_i}(\xi)}\sum_{\eta\in \{0,1\}^{S_i}}\mu_i(\eta) \int_\kappa^\infty \prcond{\til{\eta}_t=\xi}{\til{\eta}_0=\eta}{}\nu_i(dt)     -1
          \right)^2\\
         &=\sum_{\xi\in \{0,1\}^{S_i}}\pi_p^{S_i}(\xi)\left(\int_\kappa^\infty \frac{\prstart{\til{\eta}_t=\xi}{\mu_i}}{\pi_p^{S_i}(\xi)} \nu_i(dt)  -1\right)^2  \\
   &=\sum_{\xi\in \{0,1\}^{S_i}}\pi_p^{S_i}(\xi)\left(\int_\kappa^\infty \left( \frac{\prstart{\til{\eta}_t=\xi}{\mu_i}}{\pi_p^{S_i}(\xi)}  -1 \right)\nu_i(dt)  \right)^2 \\
 &\leq \int_\kappa^\infty \sum_{\xi\in \{0,1\}^{S_i}}\pi_p^{S_i}(\xi)\left( \frac{\prstart{\til{\eta}_t=\xi}{\mu_i}}{\pi_p^{S_i}(\xi)}   -1\right)^2\, \nu_i(dt),
\end{split}     
\end{align}
where we used Jensen's inequality for the last bound.
The spectral gap of the chain $\til{\eta}$ is 1, and hence using Poincar\'e's inequality yields for all $t$
\begin{align*}
        \sum_{\xi\in \{0,1\}^{S_i}}\pi_p^{S_i}(\xi)\left( \frac{\prstart{\til{\eta}_t=\xi}{\mu_i}}{\pi_p^{S_i}(\xi)}   -1\right)^2 &= \norm{\prstart{\til{\eta}_t=\cdot}{\mu_i}-\pi_p^{S_i}}^2_{2,\pi_p^{S_i}} \\
        &\leq e^{-2t} \norm{\mu_i-\pi_p^{S_i}}_{2,\pi_p^{S_i}}^2.
\end{align*}
Plugging this into~\eqref{eq:l2distance} gives
\begin{align}\label{eq:indfirst}
        \norm{\prcond{\eta_{\sigma_i}\vert_{S_i}\in \cdot}{T\geq i, A_i}{}- \pi_p^{S_i}}_{2,\pi_p^{S_i}}\leq e^{-\kappa} \norm{\mu_i-\pi_p^{S_i}}_{2,\pi_p^{S_i}}.
\end{align}
Using~\eqref{eq:muii-1} and the fact that the $L_2$ distance does not increase under projections, we get
\begin{align*}
        \norm{\mu_i-\pi_p^{S_i}}_{2,\pi_p^{S_i}} \leq \norm{\prcond{\eta_{\sigma_{i-1}}\in\cdot}{T\geq i, A_{i-1}}{}    -\pi_p}_{2,\pi_p}.
\end{align*}
Lemma~\ref{lem:23} (with $(\eta_{\sigma_{i-1}},\mathbb{P} \text{ given }T\geq i-1 \text{ and } A_{i-1},\{T\geq i\} \cap A_{i-1})$ here playing the roles of  $(J,\mathbb{P},A)$ from \eqref{e:hatnuL22}, respectively) gives that 
\begin{align}\label{eq:goalinduction}
\begin{split}
        \norm{\prcond{\eta_{\sigma_{i-1}}\in\cdot}{T\geq i, A_{i-1}}{}    -\pi_p}_{2,\pi_p}^2& \\ \leq \frac{1}{(\prcond{T\neq i-1}{T\geq i-1, A_{i-1}}{})^2} &\norm{\prcond{\eta_{\sigma_{i-1}}\in\cdot}{T\geq i-1, A_{i-1}}{}    -\pi_p}_{2,\pi_p}^2 \\&+ \frac{1}{(\prcond{T\neq i-1}{T\geq i-1, A_{i-1}}{})^2} -1. 
        \end{split}
\end{align}
Setting $\theta_i(\cdot) = \prcond{\eta_{\sigma_i}\in \cdot}{T\geq i, A_i}{}$ 
so far we have shown that 
\begin{align}\label{eq:sofar}
        e^{2\kappa}\norm{\theta_i - \pi_p}_{2,\pi_p}^2 \leq & \frac{\norm{\theta_{i-1} - \pi_p}_{2,\pi_p}^2 }{\left(\prcond{T\neq i-1}{T\geq i-1, A_{i-1}}{}\right)^2} \\&+  \frac{1}{(\prcond{T\neq i-1}{T\geq i-1, A_{i-1}}{})^2} -1.
\end{align}
We next show that for all $i$ 
\begin{align}\label{eq:thetai}
        \norm{\theta_i - \pi_p}_{2,\pi_p} \leq \frac{1}{8}.
\end{align}
Since $\eta_0\sim\pi_p$, Lemma~\ref{lem:statindep} gives that conditional on $X_{\sigma_1}$ we have that $\eta_{\sigma_{1}}\sim \pi_p$. Therefore, we get that~\eqref{eq:thetai} is true for $i=1$. Suppose it holds for $i-1$, we show that it also holds for $i$. By the definition of $T$ we have for all $i$
\begin{align*}
&\prcond{T= i}{T\geq i, A_{i}}{} =\prcond{\eta_{\sigma_i}\in \mathrm{\widehat {Env}}(x_i)}{T\geq i, A_{i}}{} \\ &= \sum_{\eta\in \mathrm{\widehat {Env}}(x_i)}  \prcond{\eta_{\sigma_i}\vert_{S_i}=\eta\vert_{S_i}, \eta_{\sigma_i}\vert_{S_i^c}=\eta\vert_{S_i^c}}{T\geq i, A_{i}}{} \\&
= \sum_{\eta\in \mathrm{\widehat {Env}}(x_i)}  \pi_p^{S_i^c}(\eta\vert_{S_i^c})\cdot \prcond{\eta_{\sigma_i}\vert_{S_i}=\eta\vert_{S_i}}{T\geq i, A_{i}}{}, 
\end{align*}
where in the last equality we used the i.i.d.\ property of the different coordinates of $\eta_{\sigma_i}$ and the fact that they are ${\rm{Ber}}(p)$ (properties (1) and (2) from the beginning of the proof). Writing $\lambda_i$ for the projection map from $\{0,1\}^{E}\to \{0,1\}^{S_i}$ and noting that 
\[
\pi_p\left(\mathrm{\widehat {Env}}(x_i)\right)=\sum_{\eta\in \mathrm{\widehat {Env}}(x_i)}  \pi_p^{S_i^c}(\eta\vert_{S_i^c})\cdot \pi_p^{S_i}(\eta\vert_{S_i}) \] 
we get 
\begin{align*}
        &\left|\prcond{T= i}{T\geq i, A_{i}}{} - \pi_p\left(\mathrm{\widehat {Env}}(x_i)\right)\right| \\&\leq \sum_{\eta\in \lambda_i(\mathrm{\widehat {Env}}(x_i))}\pi_p^{S_i}(\eta) \left|\frac{\prcond{\eta_{\sigma_i}\vert_{S_i}=\eta}{T\geq i, A_{i}}{}}{\pi_p^{S_i}(\eta)} -1 \right| \\
        &\leq \norm{\prcond{\eta_{\sigma_i}\vert_{S_i}\in \cdot}{T\geq i, A_i}{} - \pi_p^{S_i}}_{2,\pi_p^{S_i}} \\ & \leq \norm{\prcond{\eta_{\sigma_i}\in \cdot}{T\geq i, A_i}{} - \pi_p}_{2,\pi_p},
\end{align*}
where for the second inequality we used Cauchy Schwartz and for the third one the fact that the~$L_2$ distance does not increase under projections.
From this it now follows that if 
\[
\norm{\prcond{\eta_{\sigma_{i-1}}\in \cdot}{T\geq i-1, A_{i-1}}{} - \pi_p}_{2,\pi_p} \leq \frac{1}{8},
\]
then 
\[
\left| \prcond{T= i-1}{T\geq i-1, A_{i-1}}{} - \pi_p(\mathrm{\widehat {Env}}(x_{i-1}))\right| \leq \frac{1}{8}.
\]
Since $\pi_p(\mathrm{\widehat {Env}}(x_{i-1}))\leq 1/2 $, the above implies that 
\[
\prcond{T= i-1}{T\geq i-1, A_{i-1}}{} \leq \frac{5}{8}.
\]
We are now ready to show that if~\eqref{eq:thetai} holds for $i-1$, then it also holds for $i$. Indeed, 
substituting the above bound into~\eqref{eq:sofar} and using the induction hypothesis $\norm{\theta_{i-1} - \pi_p}_{2,\pi_p} \leq \frac{1}{8} $ give 
\begin{align*}
        \norm{\theta_i-\pi_p}_{2,\pi_p} \leq e^{-\kappa} \left( \frac{(1/8)^2}{(3/8)^{2}}+\frac{1}{(3/8)^2} -1\right)^{1/2} \leq \frac{\sqrt{56}}{3e^{\kappa}},
\end{align*}
which by taking $\kappa$ sufficiently large can be made smaller than $1/8$ and this completes the inductive step and the proof of the lemma. 
\end{proof}

\begin{proof}[\bf Proof of Proposition \ref{prop:LDaux}]
First we claim that for all $\alpha$ we have that 
 $ (1-\alpha)\pi(A(\alpha)^{c}) \le \pi_{\fu,p}(A^c)  $. Indeed, $\pi_{\fu,p}(A) \le  \pi(A(\alpha))+\alpha\pi(A(\alpha)^{c})  $, and hence $\pi_{\fu,p}(A^{c}) \ge (1-\alpha)\pi(A(\alpha)^{c})   $. Since $B=A(1/4)$, this now gives that $ \pi(B^{c})\le \sfrac{4}{3} \pi_{\fu,p}(A^{c}) \le \sfrac{2}{3}$. By increasing $M$ by a $\frac{4}{3}$-factor, we may replace $\pi_{\fu,p}(A^{c})  $ in \eqref{e:wtp2} as well as in the statement of Proposition \ref{prop:LDaux} by $\pi(B^{c})$.
 
 We write $r=r(\delta_0)$ with $\delta_0$ from Lemma~\ref{lem:r} and we let $\alpha=\min((\delta_0/2),1/4)$. 
To simplify notation we write $\Lambda=\Lambda_{{\rm{aux}}, (1,p)}$. 
 Let $\kappa\in \N$ to be chosen later.  
  We now define a sequence of times $(t_j)$ by setting for all $j\geq 0$
 \[
 t_j = j \left(\frac{\log (4\alpha^{-2\kappa}) }{\Lambda(16\alpha^{-2\kappa} /\|\pi_{B^c}- \pi \|_{2,\pi}^2)} \vee r \right)= j\left(\frac{\log (4\alpha^{-2\kappa})  }{\Lambda(16\alpha^{-2\kappa}  \pi(B^c)/\pi(B))} \vee r \right),
 \]
where we write $a\vee b$ for $\max(a,b)$ and $\pi_D$ for $\pi$ conditioned on $D$ (i.e., $\pi_D(x)=\frac{\pi(x)\1{x \in D}}{\pi(D)} $). We will now construct a sequence of random sets $D_0, D_1,\ldots$ such that $B^c\subseteq D_i$ for all $i$ and if $\xi_i = \1(Y_{t_i}\notin D_i)$, then for all $i$ almost surely
\[
\prcond{\xi_i=1}{\xi_0,\ldots, \xi_{i-1}}{}\geq \alpha. 
\]
This will imply the assertion of the proposition by setting 
\[
T_{i}=\inf\left\{ t_j: \sum_{k=0}^{j}\xi_k=i \right\},
\]
i.e.\ $T_i$ is the $i$-th time $t_j$ such that $Y_{t_j}\notin D_j$.

So now we turn to define the sets $D_i$. We do this by induction. For $i=0$ we set $D_0= B^c$. For $i\geq 1$ we will define $D_i$ as a measurable function of $\xi_0,\ldots, \xi_{i-1}$. Since $\pi(B^c)\leq 2/3$ and $Y_0\sim \pi$, we immediately get that 
\[
\pr{T_1=0} =\pr{\xi_0=1}= \pi(B)\geq \frac{1}{3}.
\]
We note that if $Y_0\sim \pi$, then given $Y_0\notin B$, we have that $Y_0\sim \pi_{B^c}$ and similarly if $Y_0\in B$, then $Y_0\sim \pi_B$. We now consider the measures $\nu_i(\cdot ) = \prstart{Y_{t_1}\in \cdot}{\pi_{B^i}}$ for $i=0,1$, where we set $B^0=B$ and $B^1=B^c$. We now argue that
\begin{equation}
\label{e:wenowargue}
\|\nu_0 -\pi\|_{2,\pi}^2 \vee \|\nu_1 -\pi\|_{2,\pi}^2  \le \alpha^{\kappa}\frac{\pi(B)}{\pi(B^c)} .
\end{equation}
Indeed using~\eqref{e:spb3} we obtain 
\begin{equation}
 \label{e:221}
 \|\nu_1-\pi\|_{2,\pi}^2 \;  \le \; \alpha^{\kappa}  \|\pi_{B^c}-\pi\|_{2,\pi}^2 \; = \; \alpha^{\kappa} \frac{\pi(B)}{\pi(B^c)}.
\end{equation}

We now verify that also $\|\nu_0 -\pi\|_{2,\pi}^2 \le  \alpha^{\kappa}\frac{\pi(B)}{\pi(B^c)}  $. Clearly $\|\nu_0-\pi\|_{2,\pi}^2   \le  \|\pi_{B}-\pi\|_{2,\pi}^2 = \frac{\pi(B^c)}{\pi(B)} $. Hence it suffices to consider the case that  $\frac{\pi(B^c)}{\pi(B)} \ge \alpha^{\kappa}\frac{\pi(B)}{\pi(B^c)}  $. In this case, using the fact that $\Lambda(\cdot) $ is non-increasing $\Lambda(16\alpha^{-2\kappa}  \pi(B^c)/\pi(B)) \le \Lambda(4 (4 \alpha^{-\kappa})  /\|\pi_{B}-\pi\|_{2,\pi}^2) $ and so by \eqref{e:spb3} we get that 
\begin{equation}
 \label{e:220}
 \|\nu_0-\pi\|_{2,\pi}^2 \;  \le \; \frac 14 \alpha^{\kappa}  \|\pi_{B}-\pi\|_{2,\pi}^2 \; = \; \frac 14 \alpha^{\kappa}\frac{\pi(B^{c})}{\pi(B)} \le \alpha^{\kappa}\frac{\pi(B)}{\pi(B^c)},
\end{equation} 
as desired, where in the last inequality we  have used the fact that  $\pi(B^c)\leq 2/3$ . 

From~\eqref{e:wenowargue} together with Proposition \ref{prop: Lagrange} we get that $\nu_i(B^c) \le \pi(B^{c})+ \alpha^{\kappa/2} $ for $i=0,1$. Fix $i\in \{0,1\}$. Let $D^i_1 $ be a set in 
\[
\{D \supseteq B^{c}:\nu_i(D) \ge \delta_0/2 \}
\]  
with minimal $\nu_i$ probability. By the definition of $r$ and the fact that $\pi(B^{c}) \le 2/3$ we have that for all $i=0,1$ and for $\kappa$ sufficiently large
\[ 
\nu_i(D^i_1) \in \left[ \frac{\delta_0}{2},\left(\pi(B^{c})+\alpha^{\kappa/2}\right)  \vee\left( \frac{\delta_0}{2}+1-\delta_0\right) \right] \subseteq \left[ \frac{\delta_0}{2}\wedge \frac{1}{4}, \left(1-\frac{\delta_0}{2}\right)\vee \frac{3}{4} \right]. 
\]
To see this, consider the cases $\nu_i(B^c) \le \delta_{0}/2 $ and  $\nu_i(B^c) \ge \delta_{0}/2 $. In the latter we may take $D_1^i=B^c$, while in the former, by minimality $\nu_i(D_1^i) \le \delta_{0}/2 + \max_z \nu_i(z)  $ which is at most $1-\delta_{0}/2$ as $t _1 \ge r $ (using the definition of $r$). We set $D_1=D_1^{0}$ of $Y_0 \in B^0$ and  $D_1=D_1^{1}$ of $Y_0 \in B^1$.
 
This concludes the construction of $D_1$. We now proceed by induction. 
For $a \in \{0,1\}^i $ and $j \le i$  let $a(j) \in \{0,1\}^j $ be the first $j$ co-ordinates of $a$. Assume that for each $a \in \{0,1\}^i$ we have defined sets $D_{j}^{a(j)} \supseteq B^c $ for all $j\leq i$ such that $\nu_a$, the law of $Y_{t_i}$ given that for each $j < i$ we have that $Y_{t_j} \in D_j^{a(j)}$ iff the $j$-th co-ordinate of $a$ is $1$, satisfies that 
\[
\nu_a(D_{i}^{a} ) \in \left[\alpha,1-\alpha\right] \quad \text{ and } \quad \|\nu_a-\pi \|_{2,\pi}^2 \le \alpha^{\kappa}\left( \frac{\pi(B)}{\pi(B^c)}
 + \frac{\alpha^{-2}}{1-\alpha^{\kappa-2}}  \right).
\] 
Note that we have already checked that this holds for our distributions $\nu_i$ for $i=0,1$.
We now want to construct for each $b\in \{0,1\}^{i+1}$ a distribution $\nu_b$ and the set $D_{i+1}^{b}$ such that $\nu_{b}(D^{b}_{i+1}) \in \left[\alpha,1-\alpha\right] $ and $\|\nu_b-\pi \|_{2,\pi}^2 \le \alpha^{\kappa}\left( \frac{\pi(B)}{\pi(B^c)}
 + \frac{\alpha^{-2}}{1-\alpha^{\kappa-2}}  \right) $. For $a\in \{0,1\}^i$, let $ \nu_a^{(0)}$ (respectively, $ \nu_a^{(1)} $) be the measure $\nu_a$ conditioned on $(D_i^a)^{c}$ (respectively, $D_{i}^a $). Then by Lemma~\ref{lem:23} for $j=0,1$ we get
 \begin{align*}
 \| \nu_a^{(j)} - \pi \|_{2,\pi} &\le \frac{ \| \nu_a - \pi \|_{2,\pi}^2 +1}{j\nu_a(D_i^{a})^{2}+(1-j)\nu_a((D_i^a)^{c})^{2}} \le \alpha^{-2}( \| \nu_a - \pi \|_{2,\pi}^2+1) \\
 &\le \alpha^{\kappa-2}   \frac{\pi(B)}{\pi(B^c)}+\frac{\alpha^{-2}}{1-\alpha^{\kappa-2}}=:M.  
 \end{align*}
 Using $\pi(B^c)\leq 2/3$ we see that $M \le    \frac{\pi(B)}{\pi(B^c)} (\alpha^{\kappa-2}+2\frac{\alpha^{-2}}{1-\alpha^{\kappa-2}}) \le 3 \frac{\pi(B)}{\pi(B^c)}\alpha^{-2} $, provided $\kappa \ge \kappa_0(\alpha) $. Using this bound, we see that  
 \[
 t_1\geq \frac{\log (\alpha^{-2\kappa+2})}{\Lambda\left( 4\alpha^{-2\kappa+2}/M \right)}.
 \]
For $b\in \{0,1\}^{i+1}$ we now define $\nu_b(\cdot) = \prstart{Y_{t_{1}}\in\cdot}{\nu_{b(i)}^{(b_{i+1})}}$ with $b_{i+1}$ denoting the $i+1$-st coordinate of~$b$ (and $b(i)$ its  first $i$ co-ordinates). Using~\eqref{e:spb4} this time we obtain that for sufficiently large $\kappa$
 \[
 \| \nu_b-\pi\|_{2,\pi}^2 \; \le \; \alpha^{2\kappa-2}
M \; \le \; \alpha^{\kappa}\left( \frac{\pi(B)}{\pi(B^c)}
 + \frac{\alpha^{-2}}{1-\alpha^{\kappa-2}}  \right) .  
 \] 
In particular $\| \nu_b-\pi\|_{2,\pi}^2 \le 3 \alpha^{\kappa-2} \frac{\pi(B)}{\pi(B^c)}  $ provided $\kappa$ is sufficiently large. 
Proposition~\ref{prop: Lagrange} gives that $ \nu_b(B^c) \le \pi(B^{c})+ \sqrt{3} \alpha^{(\kappa-2)/2} \le \pi(B^{c})+\alpha^{\kappa/4}  $ for $\kappa$ sufficiently large. In the same way as above when defining the set~$D_1^i$ we get that if $\kappa$ is sufficiently large, there exists some set $D_{i+1}^b \supseteq B^{c} $ such that $ \nu_b(D_{i+1}^b) \in [\alpha,1-\alpha] $. This completes the induction and the proof of the proposition.
\end{proof}

\subsection{Adaptations for the dynamical random rates model}
\label{s:randomrates}
 We now sketch the necessary adaptations to the argument required to analyze the random walk on dynamical random rates model and derive the results mentioned in Remark (9) on page 7. Recall the definition of the model as well as the claimed results.

Here each environment $\eta$ specifies the rates of the edges, not merely whether the rate of an edge is $0$ or positive. Hence the state space of the full process is potentionally much larger than in the setup of Thereom \ref{thm:1}. 
As we employ the spectral-profile technique, as well as \eqref{e:laexit}, we wish to reduce the problem to the case that $\nu$ has finite support, and thus the state space is finite. (We note that the spectral-profile technique extends to the case that the state space is continuous, but in that case it requires some mild regularity. Instead of verifying the relevant regularity condition, as well as verifying the validity of \eqref{e:laexit} in this setup, we simply reduce the problem to the case $\nu$ has finite support.)

We approximate the measure $\nu$ by a sequence of measures $\nu_n$ of finite support, also satisfying the assumptions on $\nu$. We now argue that a uniform bound on the total variation mixing times of the walk co-ordinate corresponding to the $\nu_n$'s (we will actually bound the mixing time for the corresponding full process) implies the same bound on the total variation mixing time of the walk co-ordinate for $\nu$. As we now explain, this follows from a straightforward coupling argument:
\\ \noindent Let $T,\delta >0$. Consider two sequences of time evolving environments $(\eta_{t})_{t \ge 0}$ and $(\eta'_{t})_{t \ge 0}$ such that for all $t \le T$ and all $e$ we have that $|\eta_t(e)-\eta'_t(e)| \le \delta$. Then starting from the same initial state, the walks on these two evolving environments can be coupled so that they are equal to one another by time $T$ with probability at least $1-e^{-T\delta d}$ (we omit the details).

 In our application, we can take $T$ to be the upper bound on the total variation mixing time corresponding to the $\nu_n$'s (which is independent of $n$), and so by taking $n$ large enough, we can pick $\delta$ to be arbitrarily small, and have that indeed the environments corresponding to $\nu$ and to $\nu_n$ could be coupled as above (as to only differ by at most $\delta$) by time $T$ with probability arbitrarily close to $1$.

Denote the support of $\nu$ by $S$ and assume it is finite. Crucially, $\pi$ is a stationary distribution for the walk for all possible environments, and the stationary distribution of the full process is given by $\pi \times \nu^{\otimes E}$.

As in the proof of Theorem \ref{thm:1} we wish to define regeneration times, and through them an auxiliary chain, defined as the position of the walk co-ordinate at the regeneration times. We wish the law of the environment at a regeneration time to be stationary, i.e.\ $\nu^{\otimes E}$, independently of the trajectory of the walk co-ordinate until that time.
 One difficulty is that in the random rates model, when the walk co-ordinate is at $v$, some information is gathered on the rates of \textbf{all} the edges incident to $v$, and it is gathered even before the walk jumps. Thus if at most one edge is refreshed at each update, there is always some information on the environment at times at which the walk is not at a degree $1$ vertex of $G$.

This motivates the following variant of the model: for all $v \in V$ (independently) at rate $\mu$ all of the edges incident to $v$ are refreshed according to the law $\nu$ (independently).
Let $\mathcal{E}$ and $\mathcal{E}'$ be the Dirichlet forms corresponding to the original random walk on dynamical random rates model (with single edge updates) and to the aforementioned variant, respectively. We argue that for all $f:V \times S^{E} \mapsto \mathbb{R}$ we have that
\begin{equation}
\label{e:singletoblock}
2d \mathcal{E}(f,f) \ge \mathcal{E}'(f,f).
\end{equation}
By \eqref{e:singletoblock}, at the price of picking up a $2d$ factor, it suffices to consider the aforementioned variant. Indeed the analysis below relies on the spectral-profile, which is amenable to such comparisons. 

We now prove \eqref{e:singletoblock}. Since transitions along the walk co-ordinate give exactly the same contribution in $\mathcal{E}(f,f)$ and in $\mathcal{E}'(f,f)$, it is enough to prove this for the Dirichlet forms corresponding to the evolution of the environments, which by abuse of notation we also denote by $\mathcal{E}$ and $\mathcal{E}'$. To compare these Dirichlet form, we write them in a convenient form. We first need some notation.

Let $e_1,\ldots,e_k \in E$.
Let $\eta \sim \nu^{\otimes E}$. Let  $\eta^{e_1} \sim \nu^{\otimes E}$ be such that $\eta(e)=\eta^{e_1}(e)$ for all $e \neq e_1$ and $\eta(e_1)$ and $\eta^{e_1}(e_1)$ are independent. Likewise, let  $\eta^{e_1e_2} \sim \nu^{\otimes E}$ be such that $\eta^{e_1}(e)=\eta^{e_1e_2}(e)$ for all $e \neq e_2$ and $\eta^{e_1}(e_2)$ and $\eta^{e_1e_2}(e_2)$ are independent. We define $\eta^{e_1\cdots e_i}$ analogously (in terms of $\eta^{e_1\cdots e_{i-1}}$) by induction. For each $v \in V$ let $e(v_1),\ldots,e(v_d)$ be the edges incident to $v$.
By the Cauchy-Schwarz inequality, for all $f:S^{E} \mapsto \mathbb{R}$ we have that
\[\mathcal{E}'(f,f)=\mu \sum_{v \in V} \mathbb{E}\left[\left(f(\eta)-f(\eta^{e(v_1) \cdots e(v_d)})\right)^2 \right] \]
 \[ \le \mu \sum_{v \in V}d\sum_{i \in [d]} \mathbb{E}\left[\left(f(\eta^{e(v_1)\cdots e(v_{i-1})})-f(\eta^{e(v_1) \cdots e(v_i)})\right)^2 \right] \]
\[=2d \mu \sum_{e \in E} \mathbb{E}\left[\left(f(\eta)-f(\eta^{e})\right)^2 \right]=2d\mathcal{E}(f,f) \]

One adaptation to the proof of Theorem \ref{thm:1} is that instead of considering $\mu=1$ and comparing smaller $\mu$'s to it, here we work with $\mu=M:=C'(a,b)\frac{d\mathbb{E}[X^2 \mid X>0]}{\mathbb{E}[X]}$, for some constant $C'(a,b)$ depending only on $(a,b)$ (to be chosen later), and compare smaller $\mu$'s to it (larger values of $\mu$ can be analyzed in the same fashion the case $\mu=M$ is treated).
(Recall that $a$ and $b$ are the constants from our assumption that $\mathbb{E}[e^{a(X/\mathbb{E}[X]})] \le b$, where $X \sim \nu$.)

 Another difference is in the construction of the regeneration times (and thus of the auxiliary chain). Instead of having infected edges (which are the ones on which we currently have some information about their rate) as we did in the proof of Theorem  \ref{thm:1}, now we have infected vertices. We declare $v$ infected when the walk jumps to $v$, or if the walk is currently at $v$ (if the walk jumps to an infected vertex, the number of infected vertices does not increase). When $v$ is picked to have the edges incident to it refreshed, we remove $v$ from the set of infected vertices. However, if the walk is at $v$ at that moment, it becomes infected again right after that time. 

The $(i+1)$th regeneration time denoted by $\tau_{i+1}$ is defined inductively to be the first time $t$ after time $\tau_i$ (with the convention that $\tau_0=0$ and that at time $0$ all vertices are infected) such that
\begin{itemize}
\item For some $s \in (\tau_i,t)$ we have that the walk co-ordinate at time $s$ is at a different location than its location at time $\tau_{i}$;
\item The only infected vertex at time $t-$ is the location of the walk at time $t$, denoted by $v$, and at time $t$ the edges indiced to $v$ are refreshed. 
\end{itemize}
(Note that right after a regeneration time, the vertex at which the walk is currently at becomes infected again.)

By construction, it is indeed the case that at a regeneration time the law of the environment is its stationary distribution $\nu^{\otimes E}$, independently of the trajectory of the walk until that time, and that the spacings between regeneration times are i.i.d.\ (apart from the first regeneration time, which has a different law).

As stated above, we fix $\mu=M$. Recall that $\kappa=\kappa(\mu)$ is the expected time required for the walk to leave its location after a regeneration time. By the assumption that $\mathbb{E}[e^{a(X/\mathbb{E}[X])}] \le b$, the constant $C'(a,b)$ in the definition of $M$ can be picked as to ensure that $\frac{\tau_{2}-\tau_1}{\kappa}$ has mean $O(1)$ and an exponentially decaying tail, and the rate of exponential decay can be bounded independently of $G$ and of $\nu$. (The last fact is essential in ensuring that the constant $C(a,b)$ in our bound on the mixing time depends only on $(a,b)$, but not on $G$ and $d$). We omit the details.

The auxiliary chain $(Y_i)_{i=0}^{\infty}$ is now defined by setting $Y_i$ to be the location of the walk at the $i$th regeneration time. By symmetry, its stationary distribution is uniform (this is the only use of transitivity). Crucially, by the definition of $M$ it is not hard to verify that the following analog of Lemma \ref{lem:PauxPsrw} holds also in the current setup (for $\mu \ge M)$:
\[ \frac{P_{\mathrm{aux},(\mu,p)}(x,y)+P^*_{\mathrm{aux},(\mu,p)}(x,y)}{2} \gtrsim 1/d. \]
This follows by considering the case that the previous regeneration time occurred when the walk was at vertex $x$, then the walk co-ordinate moved to $y \sim x$, and then before the walk co-ordinate moved away from $y$ the following two things occurred: the edges incident to $x$ were refreshed, and then also the edges incident to $y$ were refreshed.
We omit the details of this calculation.


We conclude with a few technical remarks:

For $A \subset V \times S^{E}$, $a \in V$ and $\alpha \in [0,1]$ we can define
$\mathrm{Env}(a,A) \subseteq \mathbb{R}_+^{E}$ and $A(\alpha) \subseteq V$ as in Definition \ref{def:EF}, by replacing $\pi_p$ with $\nu^{\otimes E}$ and $\{0,1\}^E$ by $S^E$. Crucially, because the stationary distribution of the full process is again given by a product measure, we can again bound $\pi (A(\alpha))$ from below, exactly as we have in the proof of Theorem \ref{thm:1}.

A key observation is that here the $p$-tilted hypercube $\{0,1\}^E$ is replaced by a process on $S^E$ corresponding to the evolution of the environment (in the modified model, in which at rate $\mu$ all of the edges incident to a vertex are refreshed, independently). When bounding the $L_2$ mixing time of this chain (and then multiplying by $d$, as required when translating the result to the original model), we pick up the additive term $\frac{Cd|\log \min_{x:x \neq 0}\nu(x) |}{ \mu}$ from \eqref{e:rrmt}. (To be precise, we pick up a $\frac{Cd\log \left(|E|/ \min_{x}\nu(x) \right)}{ \mu}$ term, but by absorbing part of it into the first term in the r.h.s.\ of \eqref{e:rrmt}, we only get an $\frac{Cd|\log \min_{x:x \neq 0}\nu(x) |}{ \mu}$ term.) 
To obtain an upper bound on the total variation mixing time which does not depend on $\min_{x \neq 0}\nu(x)$ we use Remark \ref{r:TV}. 

Crucially, like the $p$-tilted hypercube of  rate $\mu$, the last Markov chain has spectral gap at least $\mu$. This fact is crucial in extending Lemma \ref{lem:keyhyper} to the current setup. The analog of this lemma in our current setup is then used to relate the rate of exponential decay of the law of the hitting time of a set $A \subset V \times S^E$ (by the full process) to that of the set $A(1/4) \subset V$ (by the auxiliary chain). This is then used to bound the spectral profile of the full process in terms of that of SRW on $G$. The details are analogous to those in the proof of Theorem \ref{thm:1}.  

\section{Applications: transitive graphs of moderate growth and the hypercube} 
\label{s:lower}

In this section we prove Theorems~\ref{thm:mod} and~\ref{thm:hyp}. Below we identify a percolation cluster with the vertices lying in it. Recall that we  denote the cluster of vertex $x$ by $K_x$ and its edge boundary by~$\partial K_x $.
Finally, recall that we write $M_p= \pi_p(|\partial K_x||K_{x}|^{2})   $ and $N_p=\pi_p(|K_{x}|)$.

\begin{lemma}
\label{lem:mod}
Let $c,a\in \R$. There exists a positive constant $c_1=c_1(a,c)$ such that the following holds. Let $G=(V,E)$ be a connected vertex-transitive graph of $(c,a)$-moderate growth and diameter~$\gamma$. Suppose that $N_p\leq \gamma/4$. Then we have 
\[
\rel^{\mathrm{full},(\mu,p)} \geq c_1 \frac{( \gamma-4N_p)^2}{\mu pM_{p}}.
\]
\end{lemma}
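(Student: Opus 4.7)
The plan is to apply the variational bound $\rel^{\mathrm{full},(\mu,p)}\ge\Var_{\pi_{\fu,p}}(f)/\EE(f,f)$ to a test function that is constant on the percolation clusters of $\eta$, so that the walker's contribution to the Dirichlet form vanishes. Concretely, I fix a vertex $x_0\in V$, let $g(y):=d_G(x_0,y)$ (which is $1$-Lipschitz on $G$), and set
\[
f(x,\eta):=\min_{y\in K_x(\eta)}g(y).
\]
Because any walker step from $x$ to $y$ uses an open edge, so that $K_y(\eta)=K_x(\eta)$ and hence $f(y,\eta)=f(x,\eta)$, one immediately has $\EE_{\mathrm{walk}}(f,f)=0$ and therefore $\EE(f,f)=\EE_{\mathrm{env}}(f,f)$.

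To bound $\EE_{\mathrm{env}}(f,f)$ by a constant multiple of $\mu p\,M_p$, I observe that flipping an edge $e$ changes $f(x,\cdot)$ only if $e\in\partial K_x$ in the configuration in which $e$ is closed; in that case the merged cluster $C_1=K_x\cup K_y$ (with $y$ the endpoint of $e$ outside $K_x$) contains both minimizers, so the $1$-Lipschitz property of $g$ gives $|f(x,\eta^{e=1})-f(x,\eta^{e=0})|\le\mathrm{diam}(C_1)\le|K_x|+|K_y|$. Squaring, using $(a+b)^2\le 2(a^2+b^2)$, and summing over $e$ yields
\[
\EE_{\mathrm{env}}(f,f)\ \lesssim\ \mu p\,\E\bigl[|K_x|^2\,|\partial K_x|\bigr]+\mu p\,\E\Bigl[\sum_{e\in\partial K_x}|K_y|^2\Bigr].
\]
The first expectation is $M_p$ by definition. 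For the second, I would rewrite the double sum over $(x,e)$ as a sum over ordered pairs of adjacent clusters, use vertex-transitivity together with the identity $\sum_x h(K_x)=\sum_C|C|h(C)$, and apply the elementary inequality $ab(a+b)\le a^3+b^3$ to reduce it also to a constant multiple of $M_p$.

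For the variance lower bound I use $2\Var(f)=\E[(f(X_1,\eta_1)-f(X_2,\eta_2))^2]$ with $(X_i,\eta_i)$ i.i.d.\ $\pi\times\pi_p$. The triangle inequality gives $|f(X_1,\eta_1)-f(X_2,\eta_2)|\ge|g(X_1)-g(X_2)|-|K_{X_1}|-|K_{X_2}|$. Vertex-transitivity yields $\E[|K_{X_i}(\eta_i)|\mid X_i]=N_p$, so Markov's inequality gives $\pi_p(|K_{X_i}|\le 2N_p\mid X_i)\ge 1/2$; since $\eta_1,\eta_2$ are independent of $(X_1,X_2)$ and of each other, on the intersection of these two events (probability at least $1/4$) the subtracted terms sum to at most $4N_p$. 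This leads to
\[
\Var(f)\ \ge\ \tfrac18\,\E\bigl[(|g(X_1)-g(X_2)|-4N_p)_+^2\bigr].
\]
Applying moderate growth to the balls $B(x_0,\kappa\gamma)$ and $B(x^*,\kappa\gamma)$, where $x^*$ is a vertex with $d_G(x_0,x^*)=\gamma$ and $\kappa=\kappa(a,c)>0$ is a small constant, produces an event of probability $\gtrsim_{a,c}1$ on which $|g(X_1)-g(X_2)|\ge(1-2\kappa)\gamma$; choosing $\kappa$ small then yields $\Var(f)\gtrsim_{a,c}(\gamma-4N_p)^2$.

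Dividing the variance lower bound by the Dirichlet-form upper bound gives the claimed inequality. The main obstacle I anticipate is the combinatorial step bounding $\E\bigl[\sum_{e\in\partial K_x}|K_y|^2\bigr]$ by a constant multiple of $M_p$: it requires careful double-counting across pairs of neighbouring clusters combined with the inequality $ab(a+b)\le a^3+b^3$ and the transitivity identity above. The variance step is then largely routine once moderate growth is used to exhibit two well-separated balls of positive $\pi$-measure.
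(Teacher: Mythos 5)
Your overall strategy is the same as the paper's: a test function of the form ``distance to a fixed vertex, made constant on percolation clusters,'' so that the walk co-ordinate contributes nothing to the Dirichlet form; an upper bound $\EE(f,f)\lesssim \mu p\,M_p$; and a variance lower bound via moderate growth. (The paper uses the cluster \emph{average} of $d_G(\cdot,o)$ rather than the minimum; this is immaterial.) For the cross-term $\E_{\pi_p}\bigl[\sum_{e\in\partial K_x}|K_y|^2\bigr]$ your deterministic double-count over ordered pairs of adjacent clusters, combined with $ab(a+b)\le a^3+b^3$ and the identity $\sum_C |C|h(C)=\sum_x h(K_x)$, is correct and is a clean alternative to the paper's route, which instead conditions on $K_o$ and uses that, given $K_o=A$ with $y\notin A$, the size $|K_y|$ is stochastically dominated by its unconditional law.

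The one step that does not deliver the lemma as stated is the variance bound. After Markov's inequality your subtracted correction is $4N_p$ (on an event of probability at least $1/4$), and your geometric separation is only $(1-2\kappa)\gamma$, so what you actually obtain is $\Var(f)\gtrsim_{a,c}\bigl((1-2\kappa)\gamma-4N_p\bigr)_+^2$ for a constant $\kappa=\kappa(a,c)$. This is \emph{not} $\gtrsim(\gamma-4N_p)^2$: when $4N_p$ is close to $\gamma$ (say $4N_p=(1-2\kappa)\gamma$) your bound degenerates to $0$ while the lemma asserts a positive bound, and you cannot let $\kappa$ shrink with $(\gamma-4N_p)/\gamma$ without losing the uniform lower bound $\gtrsim_{a,c}\kappa^{2a}$ on the probability of your localization event. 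Your weaker conclusion does suffice for the application in Theorem~\ref{thm:mod}, where $N_p\le\gamma/8$, but to prove the lemma as stated you should argue as the paper does: take $A=\{d_G(X_2,x_0)\le\gamma/4,\ d_G(X_1,x_0)\ge 3\gamma/4\}$ (both events have probability bounded below by a constant depending on $(a,c)$ -- for the second, a vertex at distance at least $\gamma-1$ from $x_0$ exists by the definition of $\gamma$ and carries a ball of radius $\gamma/8$ of measure $\gtrsim_{a,c}1$), and replace Markov's inequality by the observation that $|\E_{\pi_p}[f(x,\eta)]-d_G(x_0,x)|\le N_p$ for every fixed $x$, so that conditioning on $A$ and applying Jensen gives $2\Var(f)\ge\P(A)^2\bigl(\gamma/2-2N_p\bigr)^2=\tfrac14\P(A)^2(\gamma-4N_p)^2$.
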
 

\begin{proof}[\bf Proof]

Denote the cluster of $x \in V$ w.r.t.\ $\eta$ by $K_x(\eta)$ and as usual we identify it with the set of vertices lying in it. 

Fix some $o \in V$. 
Let $f(x,\eta)=\sfrac{1}{|K_{x}(\eta)| }\sum_{v \in K_{x}(\eta) } d_G(v,o) $, where $d_G$ is the graph distance w.r.t.\ $G$. For $\eta \in \{0,1\}^E $ denote by $\eta^e $ the environment obtained from $\eta$ by setting $e$ to be open if it was not already open (i.e.\ $\eta^e(e)=1$ and $\eta^e(e')=\eta(e')$ for all $e' \in E \setminus \{ e\}$). Observe that the value of $f$ cannot change as a result of a jump in the random walk co-ordinate  (as such a jump leaves the walk in the same percolation cluster). Thus
\begin{equation}
\label{e:testDF}
\begin{split}
\EE(f,f) & =\sum_{x \in V ,\, \eta \in \{0,1\}^E , \, e \in \partial K_x(\eta) }\pi(x) \pi_{p}(\eta) \mu p\left(f(x,\eta)-f(x,\eta^e) \right)^2
\\ & \le \mu p \sum_{x,\eta,e \in \partial K_x(\eta) }\pi(x) \pi_{p}(\eta)|K_{x}(\eta^{e})|^2 \\ & = \mu p \sum_{\eta,e \in \partial K_o(\eta) } \pi_{p}(\eta)|K_{o}(\eta^{e})|^2 \\ & =\mu p \sum_{x,y : \, \{x,y\} \in E }\pi_{p}\left[\1(y \notin K_o \ni x ) (|K_o |+|K_y |)^2 \right]   ,
\end{split}
\end{equation}
where the penultimate equality follows from transitivity, and the last one from the fact that on $\{y \notin K_o \ni x\}$ we have that $|K_o^{\{x,y\}} |=|K_o |+|K_y | $. 

Observe that for all $A \subset V $ with $o,x \in A $ and $y \notin A$, given that $K_o=A $ we have that $|K_y|$ is distributed as the size of the percolation cluster of $y$ (with parameter $p$) on the induced graph on $V \setminus A$, which by an obvious coupling with $\pi_p$ is stochastically dominated by the (unconditional) law of $|K_y|$ under $\pi_p$. Thus for all such $A$ we have that $\mathbb{E}_{\pi_p}[|K_y |^a \mid   K_{o}=A] \le \mathbb{E}_{\pi_p}[|K_y |^a]  $, and hence for all $a>0$ we have that
\begin{eqnarray}
\label{e:BK}
\mathbb{E}_{\pi_p}[\1( y \notin  K_o \ni x)|K_y |^{a} \mid   K_{o}] & \le & \mathbb{E}_{\pi_p}[|K_y |^{a}] \; \; = \; \; \mathbb{E}_{\pi_p}[|K_o |^{a}].
\end{eqnarray}
Plugging  \eqref{e:BK} in \eqref{e:testDF} and summing over $\{x,y\} \in E $ yield 
\begin{eqnarray}
\label{e:BKK}
\EE(f,f) & \le & 4\mu pM_{p}. 
\end{eqnarray}

We conclude the proof by showing that $\Var_{\pi_{\fu,p}}(f) \ge c_{a,c} (\gamma-4N_p)^{2} $. It is not hard to verify that 
\begin{equation}
\label{e:dGf}
\forall \; x, \qquad |d_{G}(x,o) - \sum_{\eta}\pi_p(\eta) f(x,\eta) | \; \le \; \pi_p(|K_x|) \; = \; N_p. \end{equation}  
Let $(X,\eta)\sim \pi\times \pi_p$ and $(Y,\eta') \sim \pi\times \pi_p$ be independent. Define 
\[
A=\{d_G(Y,o) \le  \gamma/4, \quad  d_G(X,o) \ge 3\gamma/4 \}.
\]
Since $G$ is of moderate growth, there exists a positive constant $b$ such that $\pr{A} \ge b$.  Finally, by the independence between $X$ and $\eta$ together with \eqref{e:dGf}  $\E[ f(X,\eta) \mid A ] \ge 3\gamma/4-N_{p}$ and similarly we also have that  $\E[ f(Y,\eta) \mid A ] \le \gamma/4+N_{p}$, which together yield   that
\begin{align*}
        2\Var_{\pi_{\fu,p}}(f)&=\E[(f(X,\eta)-f(Y,\eta'))^2 ]\ge \E[(f(X,\eta)  -f(Y,\eta'))^2 \1(A)]   \\ 
&\ge (\pr{A})^2  \left(\E[ f(X,\eta) \mid A ]  -\E[f(Y,\eta')\mid A ])\right)^2  \ge b^2(\gamma-4N_p)^2/4,   
\end{align*}
 where for the second inequality we used Jensen's inequality and for the last one we used the assumption that $\gamma\geq 4N_p$.
\end{proof}

\begin{proof}[\bf Proof of Theorem~\ref{thm:mod}]

        Let $P$ be the transition matrix of simple random walk (SRW) on $G$. Diaconis and Saloff-Coste \cite{moderate} showed that for a  Cayley graph $G$ of $(c,a)$-moderate growth we have  
        \[
c^2\gamma^2 4^{-2a-1} \le \rel \lesssim t_{\mathrm{mix}}^{(\infty)} \lesssim_{c,a}\gamma^2.   \]  
Using this, Lemma~\ref{lem:mod} and the assumptions on $M_p$ and $N_p$ we have 
\[
\rel^{\mathrm{full},(\mu,p)}\gtrsim_{a,b,c} \frac{1}{\mu p} \gamma^2 \asymp_{a,c}\frac{1}{\mu p} \rel^{\SRW}.
\]
As proved in \cite[Proposition 8.1]{exclusion} and previously noted in \cite{Lyonsev}, for vertex-transitive graphs of degree~$d$ and $(c,a)$-moderate growth one has that \newline $ t_{\mathrm{spectral-profile}}^{\SRW}\lesssim_{a,c,d}\gamma^2 $. 
This together with Theorem~\ref{thm:1} yields
\begin{align*}
\rel^{\mathrm{full},(\mu,p)}\lesssim \mixfmupi \lesssim_{a,c,d} \frac{1}{\mu p} \gamma^2 + \frac{1}{\mu}|\log \left(1-p \right)|      \lesssim \frac{1}{\mu p} \gamma^2 \asymp \frac{1}{\mu p} \rel^{\SRW},
\end{align*}
where for the last inequality we used the assumption $|\log \left(1-p \right)|   \leq \gamma^2$. This completes the proof.
\end{proof}

\begin{proof}[\bf Proof of Theorem~\ref{thm:hyp}]
        The proof follows from Theorem~\ref{thm:1} together with the upper bound on the spectral profile of simple random walk on the hypercube (see \cite[Section~7]{exclusion}).
\end{proof}

\section{Log-Sobolev constant}
 Theorem 1.2 in \cite{HP} asserts that for every Markov chain on a finite state space \begin{equation}
  \label{HPLSchar}
\frac{1}{17} \le \inf_{\eps \in (0,1/2]} \frac{\log(1/\eps)c_{\mathrm{LS}}}{ \Lambda_{0}(\eps)
} \le 1. 
 \end{equation}
We note that the result in \cite{HP} is stated for the case that $\cL=P-I$ for some transition matrix~$P$, but as noted several times before, the general case can be reduced to the case  $\cL=c(P-I)$ for some $c>0$, and the relevant quantities scale linearly in $c$. We also note that the equivalence between \eqref{HPLSchar} and Theorem 1.2 in \cite{HP} relies on the general fact that if $A=\cup_{i=1}^r A_i $ and the sets $A_i$  are disjoint and satisfy $\cL(x,y)=0$ for all $x \in A_i$ and $y \in A_j$ for $i \neq j$, then (e.g.\ by \eqref{e:laAextremalchar})
\begin{equation}
  \label{llambdaunion}    
\lambda(A)=\min_{i \in [r] }\lambda(A_i).
\end{equation}
As for every singleton $x $ by \eqref{e:laAextremalchar} we have that $\lambda(\{x\})=-\cL(x,x) $ it follows that
\begin{equation}
  \label{LLStrivialbound}
 c_{\mathrm{LS}}\le \min_x \frac{- \cL(x,x)}{ \log(1/ \pi(x))}.
 \end{equation}

 \begin{proof}[\bf Proof of Theorem \ref{thm:cLS}]
Let $M  $ be as in \eqref{e:wtp}. Recall that   $\Lambda_{0}(\eps) $ and   $\Lambda(\eps) $ are non-increasing in $\eps$. Thus   by increasing $M$ if necessary, we may assume that $M \ge 2 $. Using this again, as well as   $  \Lambda_{0}(1/2) \le \Lambda(1) \le 2 \Lambda_{0}(1/2)$  (this is used to treat $\eps \in (\pi_*/M,\pi_*] \cup (\frac{1}{2M},\frac 12]$; for the first inequality see the proof of \cite[Lemma~2.2]{spectral},  the second inequality follows from \eqref{e:lala0} by monotonicity),  by \eqref{e:wtp} together with \eqref{HPLSchar},  we have that
\[\mu p \min_{\eps \in [\pi_*/M, 1/2] } \frac{\log(1/\eps)}{ \Lambda_{0}^{\fu,(\mu,p)}(\eps)
} \lesssim  \min_{\eps \in [\pi_*, 1/2] }  \frac{\log(1/\eps)}{ \Lambda_{0}^{\SRW}(\eps)
} \asymp 1/c_{\mathrm{LS}}^{\SRW}.  \]

It is left to prove $\min_{\eps \in (0,\pi_*/M] } \frac{\log(1/\eps)}{ \Lambda_{0}^{\fu,(\mu,p)}(\eps)} \lesssim \frac{\log (1/\pi_*)\log(\frac{1}{p(1-p)}) }{\mu }  $.

Let $\delta \le 1/M$ and let $B$ be such that  $\pi_{\fu,p}(B) =\delta \pi_*  $. Then for all $v \in V $ we have that
\[
\pi_p(\mathrm{Env}(v,B)) \le \frac{\sum_{u}\pi(u)\pi_p(\mathrm{Env}(u,B))}{\pi_*}=\frac{\pi_{\fu,p}(B) }{\pi_*} = \delta. 
 \]

Consider now the (reducible) Markov chain on $\Omega:=V \times \{0,1\}^E$ in which the walk co-ordinate cannot change, and the environment evolves in the usual fashion by refreshing each edge at rate $\mu$ (and declaring it to be open with probability $p$ and closed w.p.\ $1-p$). Note that it is reversible w.r.t.\ $\pi_{\fu,p}$. Denote the corresponding generator by $\cL'$ while that of the (usual) full process with parameters $(\mu,p)$ by $\cL$. Let $\lambda'(B)$ and $\lambda(B)$ be the  minimal Dirichlet eigenvalue of $B$ w.r.t.\ $\cL'$ (i.e., the minimal positive eigenvalue of $\cL_B'$, given by $\cL'_B(a,b):=\cL'(a,b)\1(a,b \in B)$) and $\cL$, respectively.  Then by \eqref{e:laAextremalchar} we have that
\begin{equation}
\label{e:la(B)la'(B)}
\begin{split}
\lambda'(B)&=\min \{ \EE_{\cL'}(h,h): h \in \R_{+}^{\Omega},\, \|h\|_2=1,\, \mathrm{supp}(h) \subseteq B    \} \\ 
& \le  \min \{ \EE_{\cL}(h,h): h \in \R_{+}^{\Omega},\, \|h\|_2=1,\, \mathrm{supp}(h) \subseteq B    \}=\lambda(B).
\end{split}
 \end{equation}  
(The first equality holds even without irreducibility).
Since w.r.t.\ $\cL'$ the sets $(\{v\} \times \mathrm{Env}(v,B):v \in V) $ are disconnected (i.e.\ $\cL'(a,b)=0 $ for all $a \in \{v\} \times \mathrm{Env}(v,B) $ and  $b \in \{u\} \times \mathrm{Env}(v,B) $, for all $v \neq u$) and their union is $B$, by \eqref{llambdaunion}   we have that 
\begin{equation}
\label{e:la'(B)minv}
\lambda'(B)= \min_v \lambda'(\{v\} \times\mathrm{Env}(v,B)).
 \end{equation}
As the evolution of the chain corresponding to $\cL'$ on each set of the form $\{v\} \times \{0,1\}^E $ is simply that of the rate $\mu$ $p$-tilted hypercube, for every $v$ we have that
\begin{equation}
\label{e:la'fiberhyper}
\lambda'(\{v\} \times\mathrm{Env}(v,B))=\lambda_{p\text{-tilted hypercube with rate }\mu}(\mathrm{Env}(v,B)).
 \end{equation}
 As $\pi_p(\mathrm{Env}(v,B)) \le \delta   $, by \eqref{HPLSchar}  we have that for all $v$ (uniformly in $p$ and $\mu$) 
\begin{equation}
\label{e:hyperLS}
\lambda_{p\text{-tilted hypercube with rate }\mu}(\mathrm{Env}(v,B)) \gtrsim \hat c \log(1/\delta) ,
\end{equation}
 where $\hat c = \hat c(\mu,p) $ is the log-Sobolev constant of the rate $\mu$ $p$-tilted hypercube (this is a dimension free quantity \cite{diaconis}, but here the co-ordinates of the hypercube are labeled by the set $E$). Finally, combining  \eqref{e:la(B)la'(B)}-\eqref{e:hyperLS} and using the facts that   $\pi_{\fu,p}(B) =\delta \pi_*  $ and  $\delta \le 1/M \le 1/2 $ we see that
\begin{align*}
\frac{\lambda(B)}{ \log(1/\pi_{\fu,p}(B))} \gtrsim \frac{\hat c \log(1/\delta)}{ \log(1/(\delta \pi_*) )  } &\gtrsim \hat c/\log(1/ \pi_* )\\&\gtrsim \frac{ \mu (1-2\min(p,1-p)) }{\log(\max(p,1-p)/\min(p,1-p)) )\log(\frac{1}{\pi_*} ) }, 
\end{align*}
where in the last inequality we have used 
\[
\hat c \asymp \mu(1-2\min(p,1-p))/\log(\frac{\max(p,1-p)}{\min(p,1-p))} 
\]
 (e.g.\ \cite{diaconis}, alternatively, this can be seen using the facts that (i) the log-Sobolev constant scales linearly in $\mu$, (ii) for product chains it is the same as the minimal  log-Sobolev constant of a single co-ordinate \cite{diaconis}, and (iii) the log-Sobolev constant of a single co-ordinate can be approximated using \eqref{HPLSchar}). We note that for $p=1/2$ this should be interpreted as $(1-2\min(p,1-p))/\log(\frac{\max(p,1-p)}{\min(p,1-p))}=1 $.

We now prove the result about $\rel$. Combining \eqref{e:lambdaA123} and \eqref{e:wtp} we have that  \[(\mu p)^{-1} \Lambda_{(\mu,p)}(1/2) \gtrsim \min_{a} \Lambda_{\SRW}(a) = (\rel^{\SRW})^{-1}.
\] 
Finally, using the fact that for every reversible chain $\Lambda(1/2) \leq {2}/{\rel}  $ \cite[Lemma~2.2]{spectral} completes the proof.  
\end{proof}

\noindent \textbf{Acknowledgments:} We are grateful to Tom Hutchcroft for several useful suggestions.

\bibliographystyle{abbrv}
\bibliography{DP}

\begin{thebibliography}{10}

\bibitem{aldous}
D.~Aldous and J.~Fill.
\newblock Reversible {M}arkov chains and random walks on graphs, 2002.
\newblock Unfinished manuscript. Available at \hfil\break {\tt
  http://www.stat.berkeley.edu/\textasciitilde aldous/RWG/book.html}.

\bibitem{aldous82}
D.~J. Aldous.
\newblock Some inequalities for reversible {M}arkov chains.
\newblock {\em J. London Math. Soc. (2)}, 25(3):564--576, 1982.
\newblock \href{http://www.ams.org/mathscinet-getitem?mr=MR657512}{MR657512}.

\bibitem{conf}
L.~Avena, H.~G\"{u}lda\c{s}, R.~van~der Hofstad, and F.~den Hollander.
\newblock Mixing times of random walks on dynamic configuration models.
\newblock {\em Ann. Appl. Probab.}, 28(4):1977--2002, 2018.
\newblock \href{http://www.ams.org/mathscinet-getitem?mr=MR3843821}{MR3843821}.

\bibitem{Basu}
R.~Basu, J.~Hermon, and Y.~Peres.
\newblock Characterization of cutoff for reversible {M}arkov chains.
\newblock {\em Ann. Probab.}, 45(3):1448--1487, 2017.
\newblock \href{http://www.ams.org/mathscinet-getitem?mr=MR3650406}{MR3650406}.

\bibitem{BR18}
M.~Biskup and P.-F. Rodriguez.
\newblock Limit theory for random walks in degenerate time-dependent random
  environments.
\newblock {\em J. Funct. Anal.}, 274(4):985--1046, 2018.

\bibitem{BT}
E.~Breuillard and M.~C.~H. Tointon.
\newblock Nilprogressions and groups with moderate growth.
\newblock {\em Adv. Math.}, 289:1008--1055, 2016.
\newblock \href{http://www.ams.org/mathscinet-getitem?mr=MR3439705}{MR3439705}.

\bibitem{moderate}
P.~Diaconis and L.~Saloff-Coste.
\newblock Moderate growth and random walk on finite groups.
\newblock {\em Geom. Funct. Anal.}, 4(1):1--36, 1994.
\newblock \href{http://www.ams.org/mathscinet-getitem?mr=MR1254308}{MR1254308}.

\bibitem{diaconis}
P.~Diaconis and L.~Saloff-Coste.
\newblock Logarithmic {S}obolev inequalities for finite {M}arkov chains.
\newblock {\em Ann. Appl. Probab.}, 6(3):695--750, 1996.
\newblock \href{http://www.ams.org/mathscinet-getitem?mr=MR1410112}{MR1410112}.

\bibitem{DLP}
J.~Ding, J.~R. Lee, and Y.~Peres.
\newblock Cover times, blanket times, and majorizing measures.
\newblock {\em Ann. of Math. (2)}, 175(3):1409--1471, 2012.

\bibitem{DP}
J.~Ding and Y.~Peres.
\newblock Sensitivity of mixing times.
\newblock {\em Electron. Commun. Probab.}, 18:no. 88, 6, 2013.
\newblock \href{http://www.ams.org/mathscinet-getitem?mr=MR3141797}{MR3141797}.

\bibitem{doyle2}
P.~G. Doyle and J.~Steiner.
\newblock Commuting time geometry of ergodic markov chains.
\newblock {\em arXiv preprint arXiv:1107.2612}, 2011.

\bibitem{GL}
A.~Gaudilli\`ere and C.~Landim.
\newblock A {D}irichlet principle for non reversible {M}arkov chains and some
  recurrence theorems.
\newblock {\em Probab. Theory Related Fields}, 158(1-2):55--89, 2014.

\bibitem{spectral}
S.~Goel, R.~Montenegro, and P.~Tetali.
\newblock Mixing time bounds via the spectral profile.
\newblock {\em Electron. J. Probab.}, 11:no. 1, 1--26, 2006.
\newblock \href{http://www.ams.org/mathscinet-getitem?mr=MR2199053}{MR2199053}.

\bibitem{Noinfinite}
O.~H\"{a}ggstr\"{o}m, Y.~Peres, and J.~E. Steif.
\newblock Dynamical percolation.
\newblock {\em Ann. Inst. H. Poincar\'{e} Probab. Statist.}, 33(4):497--528,
  1997.

\bibitem{Hsen}
J.~Hermon.
\newblock On sensitivity of uniform mixing times.
\newblock {\em Ann. Inst. Henri Poincar\'{e} Probab. Stat.}, 54(1):234--248,
  2018.
\newblock \href{http://www.ams.org/mathscinet-getitem?mr=MR3765888}{MR3765888}.

\bibitem{hermonspectral}
J.~Hermon.
\newblock A spectral characterization for concentration of the cover time.
\newblock {\em To appear in Journal of Theoretical Probab. arXiv preprint
  arXiv:1809.00145}, 2018.

\bibitem{L2}
J.~Hermon and Y.~Peres.
\newblock A characterization of {$L_2$} mixing and hypercontractivity via
  hitting times and maximal inequalities.
\newblock {\em Probab. Theory Related Fields}, 170(3-4):769--800, 2018.
\newblock \href{http://www.ams.org/mathscinet-getitem?mr=MR3773799}{MR3773799}.

\bibitem{HP}
J.~Hermon and Y.~Peres.
\newblock On sensitivity of mixing times and cutoff.
\newblock {\em Electron. J. Probab.}, 23:Paper No. 25, 34, 2018.
\newblock \href{http://www.ams.org/mathscinet-getitem?mr=MR3779818}{MR3779818}.

\bibitem{exclusion}
J.~Hermon and R.~Pymar.
\newblock The exclusion process mixes (almost) faster than independent
  particles.
\newblock {\em arXiv preprint arXiv:1808.10846}, 2018.

\bibitem{kozma}
G.~Kozma.
\newblock On the precision of the spectral profile.
\newblock {\em ALEA Lat. Am. J. Probab. Math. Stat.}, 3:321--329, 2007.
\newblock \href{http://www.ams.org/mathscinet-getitem?mr=MR2372888}{MR2372888}.

\bibitem{levin}
D.~A. Levin and Y.~Peres.
\newblock {\em Markov chains and mixing times}.
\newblock American Mathematical Society, Providence, RI, 2017.
\newblock Second edition of [MR2466937], With contributions by Elizabeth L.
  Wilmer and a chapter on ``Coupling from the past'' by James G. Propp and
  David B. Wilson.
  \href{http://www.ams.org/mathscinet-getitem?mr=MR3726904}{MR3726904}.

\bibitem{LW}
L.~Lov\'{a}sz and P.~Winkler.
\newblock Mixing times.
\newblock In {\em Microsurveys in discrete probability ({P}rinceton, {NJ},
  1997)}, volume~41 of {\em DIMACS Ser. Discrete Math. Theoret. Comput. Sci.},
  pages 85--133. Amer. Math. Soc., Providence, RI, 1998.
\newblock \href{http://www.ams.org/mathscinet-getitem?mr=MR1630411}{MR1630411}.

\bibitem{Lyonsev}
R.~Lyons and S.~Oveis~Gharan.
\newblock Sharp bounds on random walk eigenvalues via spectral embedding.
\newblock {\em International Mathematics Research Notices}, 2012.

\bibitem{evolving}
B.~Morris and Y.~Peres.
\newblock Evolving sets, mixing and heat kernel bounds.
\newblock {\em Probab. Theory Related Fields}, 133(2):245--266, 2005.
\newblock \href{http://www.ams.org/mathscinet-getitem?mr=MR2198701}{MR2198701}.

\bibitem{PSSsuper}
Y.~Peres, P.~Sousi, and J.~E. Steif.
\newblock Mixing time for random walk on supercritical dynamical percolation.
\newblock {\em Probab. Theorey Related Fields. arXiv preprint
  arXiv:1707.07632}, 2017.
\newblock accepted.

\bibitem{PSSexit}
Y.~Peres, P.~Sousi, and J.~E. Steif.
\newblock Quenched exit times for random walk on dynamical percolation.
\newblock {\em Markov Process. Related Fields}, 2019.
\newblock accepted.

\bibitem{PSSsub}
Y.~Peres, A.~Stauffer, and J.~E. Steif.
\newblock Random walks on dynamical percolation: mixing times, mean squared
  displacement and hitting times.
\newblock {\em Probab. Theory Related Fields}, 162(3-4):487--530, 2015.
\newblock \href{http://www.ams.org/mathscinet-getitem?mr=MR3383336}{MR3383336}.

\bibitem{SteifPC}
Y.~Peres and J.~E. Steif.
\newblock Private communication.

\bibitem{ER}
P.~Sousi and S.~Thomas.
\newblock Cutoff for random walk on dynamical {E}rd\"os {R}\'enyi graph.
\newblock {\em arXiv preprint arXiv:1807.04719}, 2018.

\bibitem{tessera2019finitary}
R.~Tessera and M.~Tointon.
\newblock A finitary structure theorem for vertex-transitive graphs of
  polynomial growth.
\newblock {\em arXiv preprint arXiv:1908.06044}, 2019.

\bibitem{hypercube}
R.~van~der Hofstad and A.~Nachmias.
\newblock Hypercube percolation.
\newblock {\em J. Eur. Math. Soc. (JEMS)}, 19(3):725--814, 2017.
\newblock \href{http://www.ams.org/mathscinet-getitem?mr=MR3612867}{MR3612867}.

\end{thebibliography}

\end{document}